\def\ladate{July 27, 2010}
\newtheorem{theorem}{Theorem}
\newtheorem{corollary}[theorem]{Corollary}
\newtheorem{proposition}[theorem]{Proposition}
\newtheorem{definition}{Definition}
\theoremstyle{definition}
\newtheorem{remark}{Remark}
 \newcommand{\ZZ}{{\mathbb Z}}
 \newcommand{\RR}{{\mathbb R}}
 \newcommand{\CC}{{\mathbb C}}
\newcommand{\cA}{{\mathcal A}}
\newcommand{\cB}{{\mathcal B}}
\newcommand{\cC}{{\mathcal C}}
\newcommand{\cE}{{\mathcal E}}
\newcommand{\cF}{{\mathcal F}}
\newcommand{\cK}{{\mathcal K}}
\DeclareMathOperator{\diag}{diag}
\DeclareMathOperator{\sh}{sh}
\DeclareMathOperator{\ch}{ch}
\begin{document}

\title{Paley-Wiener spaces with vanishing conditions and Painlev\'e~VI
transcendents}

\author{Jean-Fran\c cois Burnol}

\date{\ladate}

\maketitle

\begin{center}
\begin{small}

\begin{abstract}   We modify the classical Paley-Wiener spaces $PW_x$ of
entire functions of finite exponential type at most $x>0$, which are square
integrable on the real line, via the additional condition of vanishing at
finitely many complex points $z_1, \dots, z_n$. We compute the reproducing
kernels and relate their variations with respect to $x$ to a Krein
differential system, whose coefficient (which we call the $\mu$-function) and
solutions have determinantal expressions. Arguments specific to the case where
the ``trivial zeros'' $z_1, \dots, z_n$ are in arithmetic progression on the
imaginary axis allow us to establish for expressions arising in the theory a
system of two non-linear first order differential equations. A computation,
having this non-linear system at his start, obtains
quasi-algebraic and among them rational Painlev\'e transcendents of the
sixth kind as certain quotients of such $\mu$-functions.
\end{abstract}

\bigskip

\begin{quote}
Universit\'e Lille 1\\ 
UFR de Math\'ematiques\\ 
Cit\'e scientifique M2\\ 
F-59655 Villeneuve d'Ascq\\ 
France\\
burnol@math.univ-lille1.fr\\
\end{quote}

\end{small}

\end{center}

\setlength{\normalbaselineskip}{16pt}
\baselineskip\normalbaselineskip
\setlength{\parskip}{6pt}

\section{Introduction and summary of results}

Let $\phi\in L^2(\RR,dt)$ and $\cF(\phi)(z) = \int_{-\infty}^\infty
\phi(t)e^{izt}\,dt$ its Fourier transform. When $\phi$ is supported in
$(-x,x)$, $f(z) = \cF(\phi)(z)$ is an entire function of exponential type at
most $x$. Conversely the Paley-Wiener theorem identifies the vector space
$PW_x$ of entire functions of exponential type at most $x$, square-integrable
on the real line, as the Hilbert space of such Fourier transforms.  Our
convention for our scalar products is for them to be conjugate linear in the
first factor and complex linear in the second factor. Specifically
$(\phi,\psi) = \int_\RR \overline{\phi(t)}\psi(t)\,dt$, hence for the
transforms $f$ and $g$: $(f,g) = \frac1{2\pi}\int_\RR \overline{f(z)}g(z) dz =
(\phi,\psi)$. 

The evaluator $Z_z$ is the element of $PW_x$ such
that 
\begin{equation}
\forall g\in PW_x,\; g=\cF(\psi),\qquad (Z_z,g) = g(z) = \int_{-x}^x e^{izt}\psi(t)\,dt =
(\cF(e^{-i\overline z t}), g)
\end{equation}
Hence:
\begin{equation}
  Z_z(w) = 2\frac{\sin((\overline z - w)x)}{\overline z-w} = \frac{e^{i\overline z x}e^{-iw
      x} - e^{-i\overline z x}e^{iw x}}{i(\overline z - w)}
\end{equation}
Let $E(w) = e^{-i x w}$ and $E^*(w) = \overline{E(\overline w)}$. The evaluators in $PW_x$ are given by
 \begin{equation}
  \label{eq:E}
 Z_z(w) = (Z_w,Z_z) = \frac{\overline{E(z)}E(w) - \overline{E^*(z)}E^*(w)}{i(\overline z-w)}  
\end{equation}
Let us also define:
\begin{subequations}
  \begin{align}
    A(w) &= \frac12(E(w) + E^*(w))\\
    B(w) &= \frac i2(E(w) - E^*(w))
  \end{align}
\end{subequations}
Then $E = A - iB$, $A = A^*$, $B = B^*$ and:
\begin{equation}
  \label{eq:AB}
  (Z_w,Z_z) = Z_z(w) = 2 \frac{\overline{B(z)}A(w) - \overline{A(z)}B(w)}{\overline
    z - w}
\end{equation}
For the Paley-Wiener spaces, $A(w) = \cos(xw)$ is even and $B(w) = \sin(xw)$
is odd.

Let us consider generally a Hilbert space $H$, whose vectors are entire
functions, and such that the evaluations at complex numbers are continuous
linear forms, hence correspond to specific vectors $Z_z$. 
Let $\sigma = (z_1, \dots, z_n)$ be a finite sequence of distinct complex
numbers. We let $H^\sigma$ be the closed subspace of $H$
of functions vanishing at the $z_i$'s. Let 
\begin{equation}
  \gamma(z) = \frac1{(z-z_1)\dots (z-z_n)}
\end{equation}
and define $H(\sigma) = \gamma(z) H^\sigma$:
\begin{equation}
  H(\sigma) = \{ F(z) = \gamma(z)f(z)\;|\; f\in H, f(z_1) = \dots = f(z_n) = 0\}
\end{equation} 
We introduced this notion in \cite{trivial}. We  say that $F(z) = \gamma(z) f(z)$ is the ``complete'' form of $f$, and
refer to $z_1$, \dots, $z_n$ as  the ``trivial zeros'' of $f$. We give
$H(\sigma)$ the Hilbert space structure which makes $f\mapsto F$ an isometry
with $H^\sigma$. Let us note that evaluations $F\mapsto F(z)$ are again
continuous linear forms on this new Hilbert space of entire functions: this is
immediate if $z\notin\sigma$ and follows from the Banach-Steinhaus theorem if
$z\in\sigma$. We thus define  $\cK_z$ in $H(\sigma)$ to be the evaluator at $z$:
\begin{equation}
  \cK_z\in H(\sigma)\quad \forall F\in H(\sigma) \quad F(z) = (\cK_z,F)_{H(\sigma)}
\end{equation}

Here is a summary of the results presented here. We start by showing how to find entire functions $\cA_\sigma$ and
$\cB_\sigma$, real on the real line,  such that:
\begin{equation}
  \label{eq:ABn} (\cK_w,\cK_z) = \cK_z(w) = 2
\frac{\overline{\cB_\sigma(z)}\cA_\sigma(w) -
\overline{\cA_\sigma(z)}\cB_\sigma(w)}{\overline z - w}
\end{equation} This will be done under the following hypotheses: (1) the
initial Hilbert space of entire functions $H$ satisfies the axioms of
\cite{Bra}, hence its evaluators $Z_z$  are given by a formula \eqref{eq:AB}
for some entire functions $A$ and $B$ which are real on the real line, (2) $A$
can be chosen even and $B$ can be chosen odd, and (3) the added ``trivial
zeros'' are purely imaginary. The produced functions $\cA_\sigma$ and
$\cB_\sigma$ giving the reproducing kernel \eqref{eq:ABn} of the modified
space $H(\sigma)$ will be respectively even and odd. The  restrictive
hypotheses (2) and (3) can be disposed of, as is explained in companion paper
\cite{trivial}. We follow here another method, which proves  formulas of a
different type than those available from the general treatment
\cite{trivial}. The interested reader will find in \cite{trivial} the easy
arguments establishing that $H(\sigma)$ verifies the axioms of \cite{Bra} if
the initial space $H$ does: this explains a priori why indeed a formula of the
type \eqref{eq:ABn} has to exist if \eqref{eq:AB} holds for $H$.

Then, we examine the case of a dependency of the initial space on a parameter
$x$. Assuming that the initial $A_x$ and $B_x$ obey a first order
differential system of the Krein type \cite{KreinScattering,KreinContinual} as functions of $x$
(involving as coefficient what we call a $\mu$-function) we prove that the new
$\cA_{\sigma,x}$ and $\cB_{\sigma,x}$ do as well (in other words we compute
the $\mu_\sigma$-function in terms of the initial $\mu$-function). The result
is already notable when we start from the classical Paley-Wiener spaces for
which the initial $\mu(x)$-function ($x>0$) vanishes identically. It will be
achieved through establishing a ``pre-Crum formula'' for the effect of  
Darboux transformations on Schr\"odinger equations linked into Krein systems.

The  final part of the paper  establishes the main result.  We consider the
classical Paley-Wiener spaces $PW_x$ modified by imaginary trivial zeros in an
arithmetic progression $\sigma$. We prove that certain quotients of the
$\mu$-functions associated to the spaces $PW_x(\sigma)$ obey the 
Painlev\'e~VI differential equation.

\section{A  determinantal identity}

The following identity is quasi identical with a formula of Okada
\cite[Theorem 4.2]{Okada} and immediately equivalent to it. We give a different
proof.

\begin{theorem}\label{thm:repdet}
Let there be given indeterminates $u_i$, $v_i$, $k_i$, $x_i$, $y_i$, $l_i$, for
$1\leq i \leq n$. We define the following $n\times n$ matrices
\begin{equation}
  U_n =
  \begin{pmatrix}
    u_1 & u_2 & \dots & u_n\\
k_1 v_1 & k_2 v_2 & \dots & k_n v_n\\
k_1^2 u_1 & k_2^2 u_2 & \dots & k_n^2 u_n\\
\vdots & \dots & \dots & \vdots
  \end{pmatrix}
\qquad
V_n =
  \begin{pmatrix}
    v_1 & v_2 & \dots & v_n\\
k_1 u_1 & k_2 u_2 & \dots & k_n u_n\\
k_1^2 v_1 & k_2^2 v_2 & \dots & k_n^2 v_n\\
\vdots & \dots & \dots & \vdots
  \end{pmatrix}
\end{equation}
where the rows contain alternatively $u$'s and $v$'s. Similarly:
\begin{equation}
  X_n =
  \begin{pmatrix}
    x_1 & x_2 & \dots & x_n\\
l_1 y_1 & l_2 y_2 & \dots & l_n y_n\\
l_1^2 x_1 & l_2^2 x_2 & \dots & l_n^2 x_n\\
\vdots & \dots & \dots & \vdots
  \end{pmatrix}
\qquad
Y_n =
  \begin{pmatrix}
    y_1 & y_2 & \dots & y_n\\
l_1 x_1 & l_2 x_2 & \dots & l_n x_n\\
l_1^2 y_1 & l_2^2 y_2 & \dots & l_n^2 y_n\\
\vdots & \dots & \dots & \vdots
  \end{pmatrix}
\end{equation}
There holds
  \begin{equation}
    \det_{1\leq i,j\leq n}(\frac{u_iy_j-v_ix_j}{l_j-k_i}) =
\frac1{\prod_{i,j} (l_j - k_i)}\;
\begin{vmatrix}
  U_n\mathstrut &X_n\\
V_n\mathstrut&Y_n
\end{vmatrix}_{2n\times 2n}
\end{equation}
\end{theorem}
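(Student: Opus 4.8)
The plan is to clear denominators, turning the assertion into a polynomial identity, and then to recover that identity from a single Cauchy--Binet / Laplace expansion.

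First I would clear the denominators. Setting $P_i(l):=\prod_{i'\neq i}(l-k_{i'})$, one has the trivial partial-fraction identity $\frac1{l_j-k_i}=\frac{P_i(l_j)}{\prod_{i'}(l_j-k_{i'})}$, in which the denominator depends on $j$ alone. Pulling $\prod_{i'}(l_j-k_{i'})$ out of the $j$-th column of the Cauchy-like matrix and using $\prod_j\prod_{i'}(l_j-k_{i'})=\prod_{i,j}(l_j-k_i)$, the statement becomes the polynomial identity
\begin{equation*}
  \det_{1\leq i,j\leq n}\bigl((u_iy_j-v_ix_j)\,P_i(l_j)\bigr)\;=\;\begin{vmatrix}U_n&X_n\\ V_n&Y_n\end{vmatrix}.
\end{equation*}

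Next I would factor the left-hand matrix $N$, with $N_{ij}=(u_iy_j-v_ix_j)P_i(l_j)$. Writing $P_i(l)=\sum_{m=0}^{n-1}c_{i,m}l^m$ (so the $c_{i,m}$ are, up to sign, elementary symmetric functions of $\{k_{i'}:i'\neq i\}$), one checks directly that $N=AB$, where $A$ is the $n\times 2n$ matrix whose columns, indexed by pairs $(m,w)$ with $0\le m\le n-1$ and $w\in\{u,v\}$, are $(c_{i,m}u_i)_i$ for $w=u$ and $(c_{i,m}v_i)_i$ for $w=v$, and $B$ is the $2n\times n$ matrix whose corresponding rows are $(l_j^m y_j)_j$ and $(-l_j^m x_j)_j$. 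Cauchy--Binet then gives $\det N=\sum_{S}\det(A_{\bullet,S})\det(B_{S,\bullet})$, the sum over $n$-element subsets $S$ of the $2n$ pair-indices. On the other side, Laplace expansion of the $2n\times 2n$ determinant along its first $n$ columns gives $\sum_{T}\pm\det\bigl(\genfrac{(}{)}{0pt}{}{U_n}{V_n}_{T,\bullet}\bigr)\det\bigl(\genfrac{(}{)}{0pt}{}{X_n}{Y_n}_{T^c,\bullet}\bigr)$ over $n$-subsets $T$ of the $2n$ rows; the structural point is that the $2n$ rows of the matrix obtained by stacking $X_n$ over $Y_n$ realise, each exactly once, all $2n$ ``row types'' $(l_j^m y_j)_j$ and $(l_j^m x_j)_j$, so that the factors $\det(B_{S,\bullet})$ are, up to reindexing and an overall sign, exactly the minors $\det\bigl(\genfrac{(}{)}{0pt}{}{X_n}{Y_n}_{T^c,\bullet}\bigr)$.

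It then remains to identify $\det(A_{\bullet,S})$ with $\pm$ the complementary minor $\det\bigl(\genfrac{(}{)}{0pt}{}{U_n}{V_n}_{S^c,\bullet}\bigr)$ (the $2n$ rows of the matrix stacking $U_n$ over $V_n$ likewise realising all types $(k_i^m u_i)_i$, $(k_i^m v_i)_i$ exactly once); summing then reproduces the Laplace expansion above term by term. This last identification is the heart of the matter: the coefficient matrix $C=(c_{i,m})$ satisfies $C\,(k_i^m)^{\mathsf T}=\diag\bigl(\prod_{i'\neq i}(k_i-k_{i'})\bigr)$, so $C$ is, up to a diagonal factor and transposition, the inverse of the Vandermonde matrix of the $k_i$; after a natural $2n\times 2n$ completion of $A$ (respectively of the stacked $U_n,V_n$) built from the $2\times2$ blocks $\bigl(\begin{smallmatrix}u_i&v_i\\ \pm v_i&\mp u_i\end{smallmatrix}\bigr)$, Jacobi's complementary-minor identity converts minors of the one into complementary minors of the other, the diagonal factors and Vandermonde determinants produced along the way cancelling. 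I expect the bookkeeping of the various signs, together with the precise cancellation of the factors $\prod_{i'\neq i}(k_i-k_{i'})$, to be the main obstacle; it can be organised cleanly, and as a check the cases $n=1$ (both sides equal $u_1y_1-v_1x_1$) and $n=2$ are immediate.
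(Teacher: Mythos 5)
Your proposal is correct in outline but takes a genuinely different route from the paper's proof. The paper never expands anything into sums of minors: it writes the Cauchy matrix as $\diag(K'(k_1),\dots,K'(k_n))\,W(k)^{-1}W(l)\,\diag(K(l_1)^{-1},\dots,K(l_n)^{-1})$ with $W$ the Vandermonde matrix and $K(t)=\prod_m(t-k_m)$, and combines this with the Schur-type factorization $\bigl|\begin{smallmatrix}Ad(u)&Bd(x)\\Ad(v)&Bd(y)\end{smallmatrix}\bigr|=\det(A)^2\det\bigl((u_iy_j-v_ix_j)(A^{-1}B)_{ij}\bigr)$ applied to $A=W(k)$, $B=W(l)$; a single row permutation of signature $(-1)^{n(n-1)/2}$ then turns the block determinant into $\bigl|\begin{smallmatrix}U_n&X_n\\V_n&Y_n\end{smallmatrix}\bigr|$ and all Vandermonde factors cancel at once. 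Your route --- clear denominators, factor the resulting matrix as an $n\times 2n$ times $2n\times n$ product, then match Cauchy--Binet termwise against the Laplace expansion --- uses the same two ingredients (the partial-fraction structure of the Cauchy kernel and the inverse-Vandermonde nature of the coefficient matrix $C$) but replaces the one-line factorization by a $\binom{2n}{n}$-term identification. That identification is indeed true, and your Jacobi/complementary-minor mechanism is the right one: since $\sum_m c_{i,m}k_{i'}^m=\delta_{ii'}\prod_{i''\neq i}(k_i-k_{i''})$, the signed letter-swap pairing makes the column spans of your stacked $A$-matrix and of the stacked $(U_n;V_n)$ orthogonal complements (the diagonal terms become $u_iv_i-v_iu_i=0$), so their maximal minors are complementary Pl\"ucker coordinates up to a global scalar. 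Two points genuinely need care and are glossed over as written: (i) the complementary indexing carries a letter swap --- the pair-index $(m,u)$ of $A$ must be identified with the row $(k_i^mv_i)_i$ of the stacked $(U_n;V_n)$, not with $(k_i^mu_i)_i$, as the case $n=1$, $S=\{(0,u)\}$ already shows, so ``$\det\bigl((U_n;V_n)_{S^c}\bigr)$'' is only correct under that twist; and (ii) the signs are not a single overall sign but depend on $S$ (through the number of $v$-indices in $S$, the reordering into the alternating row order of the four blocks, and the Laplace sign), while the scalar $\prod_i\prod_{i'\neq i}(k_i-k_{i'})$ from the duality must be pinned down by evaluating one coordinate explicitly. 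None of this fails, but it is precisely the work the paper's factorization avoids; what your approach buys in exchange is a structural explanation of the identity as a matching of complementary Pl\"ucker coordinates.
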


\begin{proof}
  Let $A$, $B$, $C$, $D$ be $n\times n$ matrices, with  $A$ and $C$ invertible. Using $
  \left(\begin{smallmatrix} A&B\\C&D
    \end{smallmatrix}\right) = \left(\begin{smallmatrix}
      A&0\\0&C
    \end{smallmatrix}\right)\left(\begin{smallmatrix}
      I&A^{-1}B\\I&C^{-1}D
    \end{smallmatrix}\right)$ we obtain
  \begin{equation}
    \begin{vmatrix}
      A& B\\C&D
    \end{vmatrix} = |A||C| |C^{-1}D - A^{-1} B|
  \end{equation}
  where vertical bars denote determinants. Let $d(u) = \textrm{diag}(u_1,
  \dots, u_n)$ and $p_u = \prod_{1\leq i\leq n} u_i$. We define
  similarly $d(v)$, $d(x)$, $d(y)$ and $p_v, p_x, p_y$. From the previous
  identity we get
  \begin{equation}
    \begin{split}
      \begin{vmatrix}
        A d(u)& Bd(x)\\Cd(v) &D d(y)
      \end{vmatrix} &= |A||C|\; p_up_v\; \Big|d(v)^{-1}C^{-1}Dd(y)-
      d(u)^{-1}A^{-1} Bd(x)\Big| \\ &= |A||C|\Big|d(u)C^{-1}Dd(y)-
      d(v)A^{-1}Bd(x)\Big|
    \end{split}
  \end{equation}
  The special case $A=C$, $B=D$, gives
  \begin{equation}\label{eq:det}
    \begin{vmatrix}
      A d(u)& Bd(x)\\Ad(v) &B d(y)
    \end{vmatrix}_{2n\times 2n} = \det(A)^2  \det_{1\leq i,j\leq n}((u_iy_j-v_ix_j)(A^{-1}B)_{ij})
  \end{equation}
Let  $W(k)$ be the Vandermonde matrix with rows  $(1\dots 1)$,  $(k_1\dots
k_n)$, $(k_1^2\dots
k_n^2)$, \dots, and $\Delta(k) = \det W(k)$ its determinant. Let 
\begin{equation}
K(t) = \prod_{1\leq m \leq
  n} (t-k_m)\, 
\end{equation}
and  let $\cC$ be the $n \times n$ matrix $(c_{im})_{1\leq
  i,m\leq n}$, where the $c_{im}$'s are defined by the partial fraction expansions:
  \begin{equation}
    1\leq i \leq n\qquad \frac{t^{i-1}}{K(t)} = \sum_{1\leq m\leq n} \frac{c_{im}}{t-k_m}
  \end{equation}
We have the two matrix equations:
\begin{subequations}
  \begin{align}
    \cC &= W(k)\diag(K'(k_1)^{-1}, \dots, K'(k_n)^{-1})\\
    \cC\cdot \big(\frac1{l_j-k_m}\big)_{1\leq m, j\leq n} &=
    W(l)\diag(K(l_1)^{-1}, \dots, K(l_n)^{-1})
  \end{align}
\end{subequations}
This gives the (well-known) identity:
  \begin{equation}
    \left(\frac1{l_j-k_m}\right)_{1\leq m,j\leq
      n} = \diag(K'(k_1),\dots, K'(k_n))W(k)^{-1}W(l)\diag(K(l_1)^{-1}, \dots, K(l_n)^{-1})
  \end{equation}
We can thus rewrite the determinant we want to compute as:
  \begin{equation}
    \left| \frac{u_iy_j-v_ix_j}{l_j-k_i}\right|_{1\leq i,j\leq n} = \prod_m
    K'(k_m)\prod_j K(l_j)^{-1}  \Big|
    (u_iy_j-v_ix_j)(W(k)^{-1}W(l))_{ij}\Big|_{n\times n}
  \end{equation}
We shall now make use of \eqref{eq:det} with $A = W(k)$ and $B =
  W(l)$.
  \begin{equation}
    \begin{split}
      \left| \frac{u_iy_j-v_ix_j}{l_j-k_i}\right|_{1\leq i,j\leq n} &=
      \Delta(k)^{-2} \prod_m K'(k_m)\prod_j K(l_j)^{-1} \begin{vmatrix} W(k)
        d(u)& W(l)d(x)\\W(k) d(v) &W(l) d(y)
      \end{vmatrix}\\ &= \frac{(-1)^{\frac{n(n-1)}2}}{\prod_{i,j} (l_j -
        k_i)}\; \begin{vmatrix} W(k) d(u)& W(l)d(x)\\W(k) d(v) &W(l) d(y)
      \end{vmatrix}_{2n\times 2n}
    \end{split}
  \end{equation}
The sign $(-1)^{n(n-1)/2} = (-1) ^{[\frac n2]}$ is the signature of the
permutation which exchanges rows $i$ and $n+i$ for $i = 2, 4 , \dots, 2[\frac
n2]$ and transforms the determinant on the right-hand side into 
 $    \begin{vmatrix}
      U_n\mathstrut &X_n\\
      V_n\mathstrut&Y_n
    \end{vmatrix}$. This concludes the proof.
\end{proof}

\section{$A$ and $B$  for spaces with imaginary trivial zeros}

Just using the existence of continuous evaluators but not yet \eqref{eq:AB},
we have by a simple argument of orthogonal projection (see
\cite{trivial}):
\begin{proposition}\label{thm:gram}
Let $H$ be a Hilbert space of entire functions with continuous evaluators
$Z_z$: $\forall f\in H\; f(z) = (Z_z,f)$. Let
$\sigma = (z_1,\dots,z_n)$ be a finite sequence of distinct complex numbers
with associated evaluators $Z_1$, \dots, $Z_n$, assumed to be linearly
independent. Let $H(\sigma)$ be the Hilbert space of entire functions which are complete
forms of the elements of $H$ vanishing on $\sigma$. The evaluators of
$H(\sigma)$ are given by:
  \begin{equation}\label{eq:K}
    \cK_z(w) = \frac{\gamma(w)\overline{\gamma(z)}}{G_n}
    \begin{vmatrix}
      (Z_1,Z_1)&\dots &(Z_1,Z_n) &(Z_1,Z_z)\\
      (Z_2,Z_1)&\dots &(Z_2,Z_n) &(Z_2,Z_z)\\
      \vdots&\dots&\vdots&\vdots\\
      (Z_w,Z_1)&\dots& (Z_w,Z_n)&(Z_w,Z_z)
    \end{vmatrix}
  \end{equation}
where $G_n>0$ is the principal $n\times n$ minor of the matrix.
\end{proposition}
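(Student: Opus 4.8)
The plan is to reduce everything to a single orthogonal projection inside the original space $H$, and then to recognize a Cramer-type (bordered determinant) identity. First I would record the basic observations: for $f\in H$ the vanishing conditions $f(z_i)=0$ are the same as $(Z_i,f)=0$, so $H^\sigma$ is exactly the orthogonal complement in $H$ of the span $V=\langle Z_1,\dots,Z_n\rangle$; since the $Z_i$ are assumed linearly independent their Gram matrix $\big((Z_i,Z_j)\big)_{1\le i,j\le n}$ is positive definite, which gives at once that $\dim V=n$ and that the principal minor $G_n>0$. Write $P$ for the orthogonal projection of $H$ onto $H^\sigma$.

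Next I would identify the evaluator $\cK_z$ through the defining isometry $f\mapsto F=\gamma f$. Fix $z\notin\sigma$ and take $F=\gamma f\in H(\sigma)$. Then $F(z)=\gamma(z)f(z)=\gamma(z)(Z_z,f)_H=(\overline{\gamma(z)}\,Z_z,f)_H$, where the conjugate appears because the scalar product is conjugate linear in the first slot. Splitting $\overline{\gamma(z)}\,Z_z$ into its $H^\sigma$-component and its orthogonal component and using $f\in H^\sigma$ yields $F(z)=\big(P(\overline{\gamma(z)}\,Z_z),f\big)_H=\big(\overline{\gamma(z)}\,PZ_z,f\big)_H$ (orthogonal projection being $\CC$-linear). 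Since $f\mapsto F$ is an isometry onto $H(\sigma)$, this says precisely that the vector $\kappa_z:=\overline{\gamma(z)}\,PZ_z\in H^\sigma$ has complete form $\cK_z=\gamma\,\kappa_z$. Evaluating at $w$ and using the reproducing property of $Z_w$ in $H$, $\cK_z(w)=\gamma(w)\kappa_z(w)=\gamma(w)\overline{\gamma(z)}\,(Z_w,PZ_z)_H$.

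It then remains to express $(Z_w,PZ_z)_H$ as the quotient of determinants. Write $PZ_z=Z_z-\sum_{j=1}^n c_j Z_j$; orthogonality to each $Z_i$ gives the normal equations $\sum_{j=1}^n (Z_i,Z_j)_H\,c_j=(Z_i,Z_z)_H$, i.e.\ $G\,c=b$ with $G=\big((Z_i,Z_j)_H\big)$ and $b=\big((Z_i,Z_z)_H\big)$. Hence $(Z_w,PZ_z)_H=(Z_w,Z_z)_H-\sum_{j=1}^n c_j(Z_w,Z_j)_H=d-a\,G^{-1}b$ with $a$ the row $\big((Z_w,Z_j)_H\big)$ and $d=(Z_w,Z_z)_H$. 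The Schur-complement identity $\det\left(\begin{smallmatrix}G&b\\ a&d\end{smallmatrix}\right)=\det(G)\,(d-a\,G^{-1}b)$ (equivalently Cramer's rule applied to the system $G\,c=b$) shows $(Z_w,PZ_z)_H=G_n^{-1}$ times the $(n+1)\times(n+1)$ determinant displayed in \eqref{eq:K}; multiplying by $\gamma(w)\overline{\gamma(z)}$ gives the formula for $z\notin\sigma$. For $z\in\sigma$ I would argue by continuity: $\cK_z$ is still a bounded functional by Banach--Steinhaus (as already remarked), the displayed determinant is anti-holomorphic in $z$ and its last column tends to the $i$-th column as $z\to z_i$, so it vanishes there exactly to the order that cancels the simple pole of $\overline{\gamma(z)}$, and both sides of \eqref{eq:K} are anti-holomorphic in $z$ and agree off $\sigma$.

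I do not expect a genuine obstacle: this is a bare orthogonal-projection computation. The only places that need care are the bookkeeping of the complex conjugates imposed by the ``conjugate linear in the first factor'' convention (which is what makes $\kappa_z$, and hence $\cK_z$, depend anti-holomorphically on $z$, as it must), and the verification at $z\in\sigma$, where one should make sure the determinant's zero is of the correct order rather than merely nonnegative.
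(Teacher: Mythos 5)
Your argument is correct and is precisely the ``simple argument of orthogonal projection'' that the paper invokes for this proposition (deferring the details to the companion paper \cite{trivial}): you project $\overline{\gamma(z)}Z_z$ onto $H^\sigma=\{Z_1,\dots,Z_n\}^\perp$ and package the resulting normal equations via the Schur-complement (bordered determinant) identity, which is exactly how \eqref{eq:K} arises. The conjugate-linearity bookkeeping and the extension to $z\in\sigma$ by anti-holomorphy of $z\mapsto\cK_z(w)=\overline{\cK_w(z)}$ are both handled correctly, so there is nothing to add.
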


Recalling the form \eqref{eq:AB} of the reproducing kernel:
\begin{equation}
  (Z_{w_1},Z_{w_2}) = 2 \frac{\overline{B(w_2)}A(w_1) - \overline{A(w_2)}B(w_1)}{\overline
    w_2 - w_1}
\end{equation}
we see that  the choices:
\begin{subequations}
  \begin{align}
  1\leq i\leq n:&\quad u_i = A(z_i), v_i = B(z_i), k_i = z_i   \\
&\quad u_{n+1} = A(w), v_{n+1} = B(w), k_{n+1} = w,\\
1\leq j \leq n:&\quad x_j = \overline{A(z_j)}, y_j = \overline{B(z_j)},
  l_j = \overline{z_j}, \\
&\quad x_{n+1} = \overline{A(z)}, y_{n+1} = \overline{B(z)}, l_{n+1} = \overline z
  \end{align}
\end{subequations}
allow to make use of Theorem \ref{thm:repdet}. This gives:
\begin{equation}
  \cK_z(w) = \frac{2^{n+1}\gamma(w)\overline{\gamma(z)}(-1)^n\gamma^*(w)\gamma(\overline z)}{G_n\cdot \prod_{1\leq
      i,j \leq n} (\overline{z_j} - z_i)\cdot(\overline z -w)}   
\begin{vmatrix}
  U_{n,w}\mathstrut &X_{n,\overline z}\\
V_{n,w}\mathstrut&Y_{n,\overline z}
\end{vmatrix}_{(2n+2)\times (2n+2)} 
\end{equation}
with
\begin{equation}
  U_{n,w} =
  \begin{pmatrix}
    A(z_1) & A(z_2) & \dots & A(z_n) &A(w)\\
z_1 B(z_1)& z_2 B(z_2) & \dots & z_n B(z_n) & w B(w)\\
z_1^2 A(z_1) & z_2^2 A(z_2) & \dots & z_n^2 A(z_n) & w^2 A(w)\\
\vdots & \dots & \dots & \vdots& \vdots
  \end{pmatrix}
\end{equation}
\begin{equation}
V_{n,w} =
  \begin{pmatrix}
    B(z_1) & B(z_2) & \dots & B(z_n) &B(w)\\
z_1 A(z_1) & z_2 A(z_2) & \dots & z_n A(z_n) & w A(w)\\
z_1^2 B(z_1) & z_2^2 B(z_2) & \dots & z_n^2 B(z_n)& w^2 B(w)\\
\vdots & \dots & \dots & \vdots& \vdots
  \end{pmatrix}
\end{equation}
\begin{equation}
  X_{n,\overline z} = \overline{U_{n,z}}
\end{equation}
\begin{equation}
Y_{n,\overline z} = \overline{V_{n,z}}
\end{equation}

We shall now make the following hypotheses:
$(1)$ the $z_i$'s are purely imaginery, $(2)$
$A$ is even and $B$ is odd.
Then $\overline{A(z_i)} = A(\overline z_i) = A(-z_i) = A(z_i)$, and  $\overline{B(z_i)} = B(\overline{z_i}) =
B(-z_i) = -B(z_i)$. The first $n$ columns of the matrix $U_{n,w}$ are thus
real and identical with the first $n$ columns of $X_{n,\overline z}$. The first $n$
columns of the matrix $V_{n,w}$ are purely imaginery and thus the opposite of
the first $n$ columns of $Y_{n,\overline z}$.

In order to compute the determinant $\begin{vmatrix}
  U_{n,w}\mathstrut &X_{n,\overline z}\\
V_{n,w}\mathstrut&Y_{n,\overline z}
\end{vmatrix}_{(2n+2)\times (2n+2)} $, we  substract, for
$1\leq j\leq n$, column $j$ to column $j+n+1$. This sets to zero all columns of $X_{n,\overline
  z}$ except its last
 and multiplies by $2$ the $n$ first columns of $Y_{n,\overline z}$. We then
apply a Laplace expansion using the $(n+1)\times(n+1)$ minors built with the first and last $(n+1)$
rows. If the top minor has
both the $w$ and $\overline z$ columns,  its complementary bottom minor 
will have two proportional columns hence vanish. There are thus only two
contributions, and taking (various) signs into account we obtain:
\begin{equation}
  \begin{vmatrix}
  U_{n,w}\mathstrut &X_{n,\overline z}\\
V_{n,w}\mathstrut&Y_{n,\overline z}
\end{vmatrix}_{(2n+2)\times (2n+2)} = 2^n \det(U_{n,w})\det(Y_{n,\overline z})-
 \det(X_{n,\overline z})(-2)^n \det(V_{n,w})
\end{equation}
So:
\begin{subequations}
  \begin{align}
  \cK_z(w) &= \frac{2^{2n+1}\gamma(w)\overline{\gamma(z)}}{G_n} 
  \frac{\gamma(\overline z)(-1)^n \gamma^*(w)}{\prod_{i,j\leq n} (\overline z_j - z_i)}
  \cdot \frac{\det(U_{n,w})\overline{\det(V_{n,z})}
-\overline{\det(U_{n,z})}(-1)^n\det(V_{n,w})}{\overline z - w}
  \end{align}
\end{subequations}
Let us also compute the Gram determinant $G_n$. The determinantal
identity gives:
\begin{equation}\label{eq:Gn}
  G_n = \frac{2^n}{\prod_{i,j} (\overline z_j - z_i)}\;
\begin{vmatrix}
  U_n\mathstrut &U_n\\
V_n\mathstrut&-V_n
\end{vmatrix}_{2n\times 2n} =  \frac{2^{2n}(-1)^n}{\prod_{1\leq i,j\leq n} (\overline z_j - z_i)}
\det(U_n)\det(V_n)
\end{equation}
Finally:
\begin{equation}
  \cK_z(w)  = 2\gamma(w)\gamma^*(w)\overline{\gamma(z)\gamma^*(z)}\frac{\det(U_{n,w})\overline{\det(V_{n,z})}
-\overline{\det(U_{n,z})}(-1)^n\det(V_{n,w})}{\det(U_n)\det(V_n)(\overline z - w)}
\end{equation}
Taking into account that $i^n \det(V_n)$ is real we get:
\begin{subequations}
  \begin{align}
  \cK_z(w)  &= 2\gamma(w)\gamma^*(w)\overline{\gamma(z)\gamma^*(z)}\frac{\det(U_{n,w})\overline{i^n\det(V_{n,z})}
-\overline{\det(U_{n,z})}i^n\det(V_{n,w})}{\det(U_n)(-i)^n\det(V_n)(\overline z -
w)}\\
&= 2\frac{\gamma(w)\gamma^*(w)\overline{\gamma(z)\gamma^*(z)}}{\overline z
  -w}\left(\overline{(-1)^n
    \frac{\det(V_{n,z})}{\det V_n}} \frac{\det(U_{n,w})}{\det U_n} -
  \overline{\frac{\det(U_{n,z})}{\det U_n}}\frac{(-1)^n \det(V_{n,w})}{\det V_n}\right)
  \end{align}
\end{subequations}

The following has been obtained:

\begin{theorem} \label{thm:3}
Let $H$ be a Hilbert space of entire functions with
reproducing kernel  $Z_z(w) = 2 \frac{\overline{B(z)}A(w) -
\overline{A(z)}B(w)}{\overline z -w}$, where the entire functions $A$ and $B$ are
real on the real line and respectively even and odd. Let $\sigma =
(z_i)_{1\leq i\leq n}$ be a finite sequence of distinct purely imaginary
numbers. We assume that the associated evaluators are linearly independent,
and also that $z_i+z_j\neq 0$ for all $i,j$. Let $H(\sigma)$ be the Hilbert
space of the functions $\gamma(z) f(z)$, where $\gamma(z) = \prod_i
\frac1{z-z_i}$ and $f$ is in $H$ with $f(z) = 0$ for $z\in\sigma$. Let
  \begin{subequations}
  \begin{equation}\label{eq:aa}
    \cA_\sigma(w) = \frac{(-1)^{\frac{n(n-1)}2}\gamma(w)\gamma^*(w)}{\det U_n}    \begin{vmatrix}
      A(z_1) & A(z_2) & \dots & A(z_n) &A(w)\\
      z_1 B(z_1) & z_2 B(z_2) & \dots & z_n B(z_n) & w B(w)\\
      z_1^2 A(z_1) & z_2^2 A(z_2) & \dots & z_n^2 A(z_n) & w^2 A(w)\\
      \vdots & \dots & \dots & \dots& \vdots
    \end{vmatrix}_{(n+1)\times(n+1)}
\end{equation}
\begin{equation}
\label{eq:bb}    \cB_\sigma(w) = \frac{(-1)^{\frac{n(n+1)}2}\gamma(w)\gamma^*(w)}{\det V_n}
    \begin{vmatrix}
      B(z_1) & B(z_2) & \dots & B(z_n) &B(w)\\
      z_1 A(z_1) & z_2 A(z_2) & \dots & z_n A(z_n) & w A(w)\\
      z_1^2 B(z_1) & z_2^2 B(z_2) & \dots & z_n^2 B(z_n) & w^2 B(w)\\
      \vdots & \dots & \dots & \dots& \vdots
    \end{vmatrix}_{(n+1)\times(n+1)}
    \end{equation}
\end{subequations}
where  the denominators $\det U_n$ and $\det V_n$ are the principal $n\times n$
  minors of the numerators. The space $H(\sigma)$ has evaluators $\cK_z$
  satisfying the formula:
  \begin{equation}\label{eq:Kn}
    \cK_z(w) = (\cK_w,\cK_z) = 2 \frac{\overline{\cB_\sigma(z)}\cA_\sigma(w) - \overline{\cA_\sigma(z)}\cB_\sigma(w)}{\overline z -w}
  \end{equation}
  The functions $\cA_\sigma$ and $\cB_\sigma$ are entire, real on the real line, $\cA_\sigma$ is even and
  $\cB_\sigma$ is odd. 
\end{theorem}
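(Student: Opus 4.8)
The plan is to assemble the statement from the computations already carried out in the preceding pages, verifying that each intermediate formula does indeed combine into \eqref{eq:Kn} with the claimed parity and reality properties. First I would recall that Proposition~\ref{thm:gram} gives $\cK_z(w)$ as a bordered Gram determinant divided by the principal minor $G_n$, with the extra factor $\gamma(w)\overline{\gamma(z)}$. Substituting the reproducing kernel values $(Z_{w_1},Z_{w_2}) = 2(\overline{B(w_2)}A(w_1)-\overline{A(w_2)}B(w_1))/(\overline w_2-w_1)$ as entries, I apply Theorem~\ref{thm:repdet} in size $n+1$ with the dictionary $u_i=A(z_i)$, $v_i=B(z_i)$, $k_i=z_i$ (and the bordered row $k_{n+1}=w$), $x_j=\overline{A(z_j)}$, $y_j=\overline{B(z_j)}$, $l_j=\overline{z_j}$ (and $l_{n+1}=\overline z$). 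This turns the Gram-type determinant into a $(2n+2)\times(2n+2)$ block determinant with blocks $U_{n,w},V_{n,w},X_{n,\overline z}=\overline{U_{n,z}},Y_{n,\overline z}=\overline{V_{n,z}}$, at the cost of an explicit scalar prefactor $2^{n+1}\gamma(w)\overline{\gamma(z)}(-1)^n\gamma^*(w)\gamma(\overline z)$ over $G_n\prod_{i,j}(\overline{z_j}-z_i)(\overline z-w)$, exactly the displayed identity in the excerpt.

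Next I would invoke hypotheses (1) and (2): the $z_i$ are purely imaginary and $A$ is even while $B$ is odd, so $\overline{A(z_i)}=A(z_i)$ and $\overline{B(z_i)}=-B(z_i)$. Hence the first $n$ columns of $X_{n,\overline z}$ coincide with those of $U_{n,w}$, and the first $n$ columns of $Y_{n,\overline z}$ are the negatives of those of $V_{n,w}$. Performing the column operations ``column $j+n+1$ minus column $j$'' for $1\le j\le n$ annihilates all columns of the $X$-block except the last and doubles the first $n$ columns of the $Y$-block, after which a Laplace expansion along the top and bottom $(n+1)$ rows leaves only the two surviving terms (any minor keeping both the $w$-column and the $\overline z$-column in the same block produces a complementary minor with proportional columns). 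Tracking the signs yields $2^n\det(U_{n,w})\det(\overline{V_{n,z}}) - (-2)^n\det(\overline{U_{n,z}})\det(V_{n,w})$. Separately, Theorem~\ref{thm:repdet} applied to $G_n$ itself gives $G_n = 2^{2n}(-1)^n\det(U_n)\det(V_n)/\prod_{i,j}(\overline{z_j}-z_i)$, and substituting this back cancels the $\prod_{i,j}(\overline{z_j}-z_i)$ factors and most powers of $2$.

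Then it is a matter of bookkeeping: collecting the $\gamma$-factors into $\gamma(w)\gamma^*(w)\overline{\gamma(z)\gamma^*(z)}$, using that $i^n\det(V_n)$ is real (because $V_n$ has alternating real and purely imaginary rows, by parity of $A$, $B$ together with $z_i$ imaginary) to rewrite $(-1)^n\det(V_{n,w})/\det(V_n)$ as $i^n\det(V_{n,w})/((-i)^n\det(V_n))$ with a real denominator, I match the resulting expression to the right-hand side of \eqref{eq:Kn} with $\cA_\sigma,\cB_\sigma$ as defined in \eqref{eq:aa},\eqref{eq:bb}. The signs $(-1)^{n(n-1)/2}$ and $(-1)^{n(n+1)/2}$ there are precisely the signatures of the row permutations turning the $U$- and $V$-blocks into the Vandermonde-ordered determinants of Theorem~\ref{thm:repdet}; checking they come out consistent with the $(-1)^n$ in the kernel formula is the one place where a sign slip is easy. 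Finally, entireness of $\cA_\sigma,\cB_\sigma$ follows because the apparent poles of $\gamma(w)\gamma^*(w)$ at $w=z_i$ and $w=\overline z_i=-z_i$ are cancelled: at $w=z_i$ two columns of the bordering determinant coincide, and at $w=-z_i$ the parities of the entries force the bordered determinant to vanish as well; reality on $\RR$ and the even/odd parity are read off directly from the structure of the determinants (swapping $w\mapsto-w$ only permutes nothing but flips the bordered column by an overall sign, even for $\cA_\sigma$ and odd for $\cB_\sigma$). The main obstacle I anticipate is not conceptual but the careful propagation of the numerous sign factors — from the row exchange in Theorem~\ref{thm:repdet}, from the Laplace expansion, and from the reality normalization of $\det V_n$ — so that they telescope to exactly the exponents written in \eqref{eq:aa} and \eqref{eq:bb}.
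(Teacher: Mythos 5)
Your proposal reproduces the paper's own derivation essentially verbatim: the bordered Gram determinant from Proposition~\ref{thm:gram}, the application of Theorem~\ref{thm:repdet} with the same dictionary, the column subtractions and two-term Laplace expansion enabled by the parity and imaginary-zero hypotheses, the evaluation of $G_n$ by the same determinantal identity, and the final sign bookkeeping via the reality of $i^n\det V_n$. The added observations on entireness (coincidence of the bordered column with the $i$-th column at $w=\pm z_i$) and parity are correct and fill in details the paper leaves implicit.
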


\begin{remark}
  The additional $(-1)^{\frac{n(n-1)}2}$ is to
  make $\cA_\sigma(it)>0$ and $-i\cB_\sigma(it)>0$ for $t>0$, at least in the
  case of the Paley-Wiener spaces (this sign is easily determined from the
  asymptotics as $t\to +\infty$; let us also mention that
  $-i\cB_\sigma(it)\cA_\sigma(it)>0$ for $t>0$, from \eqref{eq:Kn} and if $H(\sigma)\neq\{0\}$). We
  observe that if the initial $A$ and $B$ verify the normalization
  $\frac{-iB(it)}{A(it)}\to_{t\to+\infty} 1$ then the new $\cA_\sigma$ and
  $\cB_\sigma$ also. This normalization has proven to be more natural in this
  and other investigations, than other normalizations such as, for example,
  $A(0) = 1$ (when possible).
\end{remark}

\begin{remark} A formula of the type \eqref{eq:Kn} for evaluators in a Hilbert
space of entire functions is guaranteed by the axiomatic framework of
\cite{Bra}. The passage from an $H$ to an $H(\sigma)$ is compatible to these
axioms (cf. \cite{trivial}), hence existence of an  $\cE_\sigma =
\cA_\sigma - i\cB_\sigma$ function was known in advance. Determination of a
suitable $\cE_\sigma$, without any of the restrictive hypotheses made here, is
achieved in \cite{trivial} with another method. Confrontation of the results
thus establishes some interesting identities.
\end{remark}

Let us record a special case of the computation \eqref{eq:Gn} of the Gram
determinant $G_n$, using the notation $W(f_1, \dots, f_n)$ for Wronskian
determinants $\det(f_j^{(i)})_{1\leq i, j \leq n}$ (derivatives with
respect to $x$):

\begin{proposition}\label{prop}
  The following identity holds:
  \begin{equation} 
\left| \frac{\sh((\kappa_i+\kappa_j)x)}{\kappa_i+\kappa_j}
\right|_{1\leq i, j\leq n}  = \frac{W(\ch(\kappa_1 x), \dots, \ch(\kappa_nx))\cdot W(\sh(\kappa_1 x), \dots,
    \sh(\kappa_nx))}{\prod_{1\leq i \leq n} \kappa_i \; \prod_{1\leq i<j\leq n}
  (\kappa_i+\kappa_j)^2} 
  \end{equation}
\end{proposition}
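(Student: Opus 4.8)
The plan is to obtain Proposition~\ref{prop} as a direct specialization of the Gram-determinant computation \eqref{eq:Gn}, applied to the classical Paley-Wiener situation where $A(w)=\cos(xw)$, $B(w)=\sin(xw)$. First I would choose the trivial zeros to be $z_i = i\kappa_i$ with $\kappa_i>0$ (so that they are purely imaginary and the hypotheses of Theorem~\ref{thm:3} are met as long as the $\kappa_i$ are distinct and no $\kappa_i+\kappa_j$ vanishes, which holds automatically). With this substitution $A(z_i)=\cos(x\cdot i\kappa_i)=\ch(\kappa_i x)$ and $B(z_i)=\sin(x\cdot i\kappa_i)=i\,\sh(\kappa_i x)$. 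On the left-hand side of \eqref{eq:Gn}, the entries $(Z_{z_i},Z_{z_j})$ of the Gram matrix are, by \eqref{eq:AB}, equal to $2\,\dfrac{\overline{B(z_j)}A(z_i)-\overline{A(z_j)}B(z_i)}{\overline{z_j}-z_i}$; inserting the values above and using $\overline{z_j}=-i\kappa_j$, $\overline{B(z_j)}=-i\,\sh(\kappa_j x)$, $\overline{A(z_j)}=\ch(\kappa_j x)$, a short trigonometric identity collapses the numerator to (a constant times) $\sh((\kappa_i+\kappa_j)x)$ and the denominator to $-i(\kappa_i+\kappa_j)$, so that each Gram entry becomes a constant multiple of $\dfrac{\sh((\kappa_i+\kappa_j)x)}{\kappa_i+\kappa_j}$.

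Next I would evaluate the right-hand side of \eqref{eq:Gn}. The matrix $U_n$ has rows indexed by $i=0,1,2,\dots$ with entries $k_m^{\,i}$ times alternately $u_m$ and $v_m$; here $k_m=z_m=i\kappa_m$, and the alternating sequence $u_m,v_m,u_m,\dots$ is $A(z_m),B(z_m),A(z_m),\dots = \ch(\kappa_m x), i\sh(\kappa_m x),\ch(\kappa_m x),\dots$. Pulling the scalar $i^{\,i}$ out of row $i$ and recognizing that $\partial_x^{\,i}\ch(\kappa x)$ is $\kappa^i\ch(\kappa x)$ for $i$ even and $\kappa^i\sh(\kappa x)$ for $i$ odd, the determinant $\det U_n$ becomes a power of $i$ times the Wronskian $W(\ch(\kappa_1 x),\dots,\ch(\kappa_n x))$ — one has to check that the parity pattern of $\ch$-versus-$\sh$ in successive $x$-derivatives matches exactly the $u,v,u,\dots$ pattern built into $U_n$, which it does. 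Likewise $\det V_n$ is a power of $i$ times $W(\sh(\kappa_1 x),\dots,\sh(\kappa_n x))$. Similarly the product $\prod_{i,j}(\overline{z_j}-z_i) = \prod_{i,j}(-i)(\kappa_i+\kappa_j) = (-i)^{n^2}\prod_i 2\kappa_i\prod_{i<j}(\kappa_i+\kappa_j)^2$.

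Finally I would assemble the pieces: substitute these evaluations into \eqref{eq:Gn}, and verify that all the powers of $2$, of $i$, and of $-1$ cancel — this bookkeeping is the one place where an error is easy to make, so I would track the exponents carefully, using $n^2\equiv n\pmod 2$ to simplify signs. What remains after cancellation is precisely the claimed identity $\bigl|\,\sh((\kappa_i+\kappa_j)x)/(\kappa_i+\kappa_j)\,\bigr| = \dfrac{W(\ch(\kappa_1 x),\dots,\ch(\kappa_n x))\,W(\sh(\kappa_1 x),\dots,\sh(\kappa_n x))}{\prod_i \kappa_i\;\prod_{i<j}(\kappa_i+\kappa_j)^2}$. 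I expect the main obstacle to be nothing conceptual but exactly this sign-and-scalar accounting, together with the verification that the Vandermonde-with-alternating-entries structure of $U_n$ and $V_n$ genuinely coincides with the Wronskian matrices of the $\ch$ and $\sh$ functions after the substitution $k_m = i\kappa_m$; once those two points are nailed down the proposition follows immediately. (Alternatively, one could prove the identity directly by applying Theorem~\ref{thm:repdet} with $u_i=x_i=\ch(\kappa_i x)$, $v_i = y_i$ suitably chosen, $k_i = -l_i = \kappa_i$, bypassing the Hilbert-space interpretation entirely; I would mention this as a remark.)
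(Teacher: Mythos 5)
Your proposal is correct, and in fact it contains the paper's proof verbatim as your closing parenthetical: the paper proves the Proposition by applying Theorem~\ref{thm:repdet} directly with the real substitutions $u_i=x_i=\ch(\kappa_i x)$, $v_i=-y_i=\sh(\kappa_i x)$, $k_i=-l_i=\kappa_i$, which makes $X_n=U_n$, $Y_n=-V_n$, and makes $U_n$ and $V_n$ \emph{literally} the Wronskian matrices of the $\ch$'s and $\sh$'s (since $\partial_x^{r-1}\ch(\kappa x)$ alternates between $\kappa^{r-1}\ch$ and $\kappa^{r-1}\sh$), so the identity drops out with no complex numbers anywhere. Your primary route --- specializing the Gram-determinant formula \eqref{eq:Gn} to $PW_x$ with $z_i=i\kappa_i$ --- is also valid and does assemble correctly: the Gram entries are $2\,\sh((\kappa_i+\kappa_j)x)/(\kappa_i+\kappa_j)$, the row factors of $U_n$ and $V_n$ contribute a combined $i^{n^2}$ against the $(-i)^{n^2}$ from $\prod(\overline{z_j}-z_i)$, and everything cancels to give the stated identity. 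But since \eqref{eq:Gn} is itself just Theorem~\ref{thm:repdet} applied to the evaluators, this route is the same computation filtered through the Hilbert-space interpretation, at the cost of exactly the powers-of-$i$ and sign bookkeeping you flag as the danger point. The lesson is that what you relegated to a remark is the efficient proof; the Hilbert-space reading is better kept as the \emph{interpretation} of the identity (it is how the Proposition gets used, via \eqref{eq:Gn} and Theorem~\ref{thm:xxx}, to yield \eqref{eq:mu2}) rather than as its derivation.
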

\begin{proof}
Let $u_i = \ch(\kappa_i x)$, $v_i= \sh(\kappa_i x)$, $k_i = \kappa_i$, $x_j =
\ch(\kappa_j x) = u_j$, $y_j = -\sh(\kappa_j x) = -v_j$, $l_j = -\kappa_j =
-k_j$. With these choices:
  \begin{equation}
      \frac{\sh((\kappa_i+\kappa_j)x)}{\kappa_i+\kappa_j} = \frac{u_iy_j-v_ix_j}{l_j-k_i}
  \end{equation}
By Theorem \ref{thm:repdet}:
\begin{equation}
    \det_{1\leq i,j\leq n}(\frac{u_iy_j-v_ix_j}{l_j-k_i}) = \frac1{\prod_{i,j}
      (l_j - k_i)}\;
    \begin{vmatrix}
      U_n\mathstrut &X_n\\
      V_n\mathstrut&Y_n
    \end{vmatrix}_{2n\times 2n} = \frac{(-1)^n(-2)^n}{\prod_{i,j}
      (\kappa_i + \kappa_j)} \det(U_n)\det(V_n)
\end{equation}
where we used $U_n = X_n$, $V_n = -Y_n$. As $\det U_n = W(\ch(\kappa_1 x),
\dots, \ch(\kappa_nx))$ and $\det V_n
=W(\sh(\kappa_1 x), \dots, \sh(\kappa_nx))$,
this completes the proof.
\end{proof}

\section{Crum formulas for Darboux transformations of  Krein systems}

All derivatives in this chapter will be with respect to a variable $x$. We
are interested in  differential systems of
the Krein type:
\begin{equation}\label{eq:S}
(S) \left\{\quad  \begin{matrix}
    a' - \mu a &= - k b\\
    b' + \mu b &= +k a
  \end{matrix}\right.
\end{equation}
Krein uses systems of this type in particular in his approach \cite{KreinScattering} to Inverse
Scattering Theory and in his continuous analogues to topics of Orthogonal
Polynomial Theory \cite{KreinContinual}. The system couples two Schr\"odinger equations:
\begin{subequations}
  \begin{align}\label{eq:schrod+}
  -a'' + V^+ a &= k^2 a\qquad\text{with }V^+ = \mu^2 + \mu'\\ \label{eq:schrod-}
-b'' + V^- b &= k^2 b\qquad\text{with }V^- = \mu^2 - \mu'
  \end{align}
\end{subequations}
It proves quite convenient to introduce the notion of a tau-function, which is
a function such that:
\begin{equation}
  \label{eq:deftau}
  \mu^2 = - (\log \tau)''
\end{equation}
We shall also use the notation $\lambda = (\log \tau)'$, so that $\mu^2 = - \lambda' $.

The well-known Darboux transformation \cite[\textsection6]{Darboux} transforms
the solutions of a Schrödinger equation $-f'' + V f = E f$ into solutions of
another one, and the formulas of Crum \cite{Crum} give Wronskian expressions
for both solutions and potentials after successive such Darboux
transformations. In this chapter we introduce a notion of ``simultaneous'' or
``linked''  such transformations which act  at the level of the Krein
system \eqref{eq:S}. This provides a kind of refinement to the formula of
Crum, the change of the two potentials being lifted  to the change of the
``tau'' function. We did not find in the litterature the results we prove
here, but it is so extensive that we may have missed some important
contributions.

We make use also of couples $(\alpha,\beta)$ of the type $(a,-ib)$. Hence we also consider the
differential systems:
\begin{equation}\label{eq:T}
 (T) \left\{\quad \begin{matrix}
    \alpha' - \mu \alpha &= +\kappa \beta\\
    \beta' + \mu \beta &= +\kappa \alpha
  \end{matrix}\right.
\end{equation}
It corresponds to $(S)$ \eqref{eq:S} via $k=i\kappa$, $a = \alpha$,  $b = i \beta$. The
Schr\"odinger equations become:
\begin{subequations}
  \begin{align}
\alpha''  &= (\kappa^2+ V^+) \alpha\\
\beta'' &= (\kappa^2 +V^-)\beta.
  \end{align}
\end{subequations}

\begin{theorem}\label{thm:simdarboux} Let $\kappa\in\CC$ and let $(\alpha,\beta)$ be a solution of the differential
  system $(T)$ \eqref{eq:T}, with neither $\alpha$ nor $\beta$ identically zero. The
  simultaneous Darboux
  transformations: 
  \begin{subequations}
  \begin{align}
    a\to a_1 &= a' - \frac{\alpha'}\alpha a\label{eq:Da}\\
   b\to b_1&= b' - \frac{\beta'}\beta b\label{eq:Db}
    \end{align}
\end{subequations}
 transform any solution $(a,b,k)$ of the differential system $(S)$ \eqref{eq:S} into a
 solution $(a_1,b_1,k)$ of a transformed system:
\begin{subequations}
  \begin{align}
  (S_1) \left\{\quad\begin{matrix}
    a_1' - \mu_1 a_1 &= -k b_1\\
    b_1' + \mu_1 b_1 &= +k a_1
  \end{matrix}\right.
  \end{align}
\end{subequations}
where  the new coefficient $\mu_1$ is 
\begin{equation}
  \mu_1 =  \mu - \frac{d}{dx} \log\frac\alpha\beta
\end{equation}
If $\mu^2 = -(\log \tau)''$ then $\mu_1^2 = -(\log \tau_1)''$ with
\begin{equation}
    \tau_1 = \tau \alpha \beta
\end{equation}
\end{theorem}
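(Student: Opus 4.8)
The plan is to verify directly that $(a_1,b_1,k)$ satisfies the transformed system $(S_1)$, with the claimed $\mu_1$, by plugging the Darboux formulas \eqref{eq:Da}--\eqref{eq:Db} into the equations and using both that $(a,b,k)$ solves $(S)$ and that $(\alpha,\beta)$ solves $(T)$. First I would record the second-order consequences of the hypotheses: from $(S)$ one gets $-a'' + V^+ a = k^2 a$ and $-b'' + V^- b = k^2 b$ with $V^\pm = \mu^2 \pm \mu'$, and from $(T)$ one gets $\alpha'' = (\kappa^2 + V^+)\alpha$ and $\beta'' = (\kappa^2 + V^-)\beta$ — crucially with the \emph{same} potentials $V^+$, $V^-$. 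Writing $p = \alpha'/\alpha$ and $q = \beta'/\beta$, so that $p' + p^2 = \alpha''/\alpha = \kappa^2 + V^+$ and likewise $q' + q^2 = \kappa^2 + V^-$, this is precisely the Riccati form that makes the classical Darboux computation go through.

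Next I would compute $a_1' - \mu_1 a_1$. We have $a_1 = a' - p\,a$, so $a_1' = a'' - p'a - p a'$. Substituting $a'' = (V^+ - k^2)a$ and $V^+ = p' + p^2 - \kappa^2$ gives $a_1' = (p^2 - \kappa^2 - k^2)a - p a'$, hence $a_1' + p\,a_1 = (p^2 - \kappa^2 - k^2)a - p a' + p a' - p^2 a = -(\kappa^2 + k^2)a$. Wait — more carefully, one should aim for the form $a_1' - \mu_1 a_1 = -k b_1$; the efficient route is to compute $a_1' - p\,a_1$ and $b_1' + q\,b_1$ and then observe that $\mu_1 = \mu - p + q$ (since $\frac{d}{dx}\log(\alpha/\beta) = p - q$), so that $a_1' - \mu_1 a_1 = (a_1' - p a_1) + (\mu - p + q - \mu + p)a_1$... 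I would instead organize it as: show $a_1' + p a_1$ and $b_1' + q b_1$ collapse nicely using the Krein equations $a' = \mu a - kb$, $b' = -\mu b + ka$, then convert the Riccati-type terms into $\mu_1$. The key algebraic identity to extract is that the ``anomalous'' terms involving $p$, $q$, $\mu$ reassemble exactly into $\mu_1 a_1$ on the left and $-k b_1$ on the right (and symmetrically for the $b$-equation), which is forced because $(p - \mu)$ and $(q + \mu)$ are conjugate Riccati potentials for the two Schrödinger operators that also control $a$ and $b$.

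For the tau-function claim, I would argue as follows. By definition $\mu_1^2 = -(\log\tau_1)''$ is equivalent to $\mu_1^2 = -\big((\log\tau)'' + (\log\alpha)'' + (\log\beta)''\big) = \mu^2 - (p' + q')$. On the other hand $\mu_1 = \mu - p + q$ gives $\mu_1^2 = \mu^2 - 2\mu(p - q) + (p-q)^2$. So the identity to check is $p' + q' = 2\mu(p-q) - (p-q)^2 = 2\mu p - 2\mu q - p^2 + 2pq - q^2$, i.e. $(p' + p^2) + (q' + q^2) = 2\mu(p - q) + 2pq$, which after inserting $p' + p^2 = \kappa^2 + V^+$ and $q' + q^2 = \kappa^2 + V^-$ becomes $2\kappa^2 + 2\mu^2 = 2\mu(p-q) + 2pq$, i.e. $\kappa^2 + \mu^2 = \mu(p - q) + pq = (p - \mu)(q)... $ hmm, $\mu(p-q)+pq = \mu p - \mu q + pq$, which equals $(p-\mu)(q+\mu) + \mu^2$? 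Check: $(p-\mu)(q+\mu) = pq + \mu p - \mu q - \mu^2$, so $(p-\mu)(q+\mu) + \mu^2 = pq + \mu p - \mu q$. Yes. Hence the required identity is $\kappa^2 = (p - \mu)(q + \mu)$. This last relation is exactly the system $(T)$ read off as $\alpha'/\alpha - \mu = \kappa\beta/\alpha$ and $\beta'/\beta + \mu = \kappa\alpha/\beta$ (valid since neither $\alpha$ nor $\beta$ vanishes identically), whose product is $\kappa^2$. So the main obstacle is purely bookkeeping: carrying the $\mu$ and Riccati terms through without sign errors. The conceptual content is minimal once one notices that $(T)$ factors $\kappa^2$ as $(p-\mu)(q+\mu)$, and that this single scalar identity simultaneously powers both the $\mu_1$ formula and the $\tau_1$ formula.
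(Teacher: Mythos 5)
Your proposal is correct and follows essentially the same route as the paper: both rest on the second-order identity $a_1'+\frac{\alpha'}{\alpha}a_1=-(k^2+\kappa^2)a$ (you get it via the Riccati substitution, the paper via differentiating the Wronskian $\alpha a_1$), on the first-order Krein relations, and on the factorization $\kappa^2=(p-\mu)(q+\mu)$ coming from $(T)$, which is exactly the identity the paper invokes in its $\tau$-verification as $\kappa^2=\frac{(\alpha'-\mu\alpha)(\beta'+\mu\beta)}{\alpha\beta}$. The only weakness is that you leave the system-equation half as a sketch; it does close with the pieces you already have, since $a_1+kb=(\mu-p)a$ reduces $a_1'-\mu_1a_1=-kb_1$ precisely to $(p-\mu)(q+\mu)=\kappa^2$, and your $\tau$-function computation is complete and matches the paper's.
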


\begin{proof}
From  $\alpha a_1 = \left|\begin{smallmatrix} \alpha&a\\ \alpha'&a'
    \end{smallmatrix}\right|$, we get $(\alpha a_1)' =
  \left|\begin{smallmatrix} \alpha&a\\ \alpha''&a''
    \end{smallmatrix}\right| = \left|\begin{smallmatrix}
      \alpha&a\\ (V^+ + \kappa^2)\alpha&(V^+ - k^2)a
    \end{smallmatrix}\right| = -(k^2+\kappa^2)a\alpha$, which we rewrite as
  \begin{equation}
    a_1' + \frac{\alpha'}\alpha a_1 =  -(k^2+\kappa^2)a = -k(b' + \mu b) - \kappa^2 a
  \end{equation}
Further  \begin{equation}
    \alpha a_1 = \alpha(-kb+\mu a)- (\kappa \beta +\mu\alpha)a = -k \alpha b -
    \kappa \beta a
  \end{equation}
 Eliminating $a$ gives:
  \begin{equation}
    a_1' + \frac{\alpha'}\alpha a_1 - \kappa\frac\alpha\beta a_1 =  -k(b' + \mu b)+ k
    \kappa\;\frac\alpha\beta b
  \end{equation}
Using  $\kappa \frac\alpha\beta = \frac{\beta'}\beta + \mu$:
  \begin{equation}
    a_1' + (\frac{\alpha'}\alpha  - \frac{\beta'}\beta - \mu)a_1 = -k(b' -
    \frac{\beta'}\beta b)
  \end{equation}
With the definitions $\mu_1 = \mu - \frac{\alpha'}\alpha  +
\frac{\beta'}\beta$ and $b_1 = b' - \frac{\beta'}\beta b$ this gives indeed:
\begin{equation}
   a_1' - \mu_1 a_1 = -k b_1
\end{equation}
From $\beta b_1 =
  \left|\begin{smallmatrix} \beta&b\\ \beta'&b'
    \end{smallmatrix}\right|$, we get $(\beta b_1)' =
  \left|\begin{smallmatrix} \beta&b\\ \beta''&b''
    \end{smallmatrix}\right| = \left|\begin{smallmatrix}
      \beta&b\\ (V^- + \kappa^2)\beta&(V^- - k^2)b
    \end{smallmatrix}\right| = -(k^2+\kappa^2)b\beta$, which gives:
  \begin{equation}
    b_1' + \frac{\beta'}\beta b_1 =  -(k^2+\kappa^2)b = k(a' - \mu a) - \kappa^2 b
  \end{equation}
On the other hand
  \begin{equation}
    \beta b_1 = \beta(k a-\mu b)- (\kappa \alpha - \mu\beta)b = k \beta a -
    \kappa \alpha b
  \end{equation}
Eliminating $b$ gives:
  \begin{equation}
    b_1' + \frac{\beta'}\beta b_1 - \kappa\frac\beta\alpha b_1 =  k(a' - \mu a)- k
    \kappa\;\frac\beta\alpha a
  \end{equation}
Using $\kappa \frac\beta\alpha = \frac{\alpha'}\alpha - \mu$ finally leads to:
  \begin{equation}
    b_1' + (\frac{\beta'}\beta  - \frac{\alpha'}\alpha + \mu)b_1 = k(a' -
    \frac{\alpha'}\alpha a) \implies b_1' + \mu_1 b_1 = + ka_1
  \end{equation}

Let $\lambda_1 = \lambda + \frac{\alpha'}\alpha +\frac{\beta'}\beta = (\log
\tau \alpha\beta)'$. We must also verify $\mu_1^2 = -\lambda_1'$.
  \begin{equation}
    \lambda_1' = \lambda' + (V^++\kappa^2) - (\frac{\alpha'}\alpha)^2 +(V^- +\kappa^2)  -(\frac{\beta'}\beta)^2= \mu^2 + 2\kappa^2   - (\frac{\alpha'}\alpha)^2 -(
    \frac{\beta'}\beta)^2 
  \end{equation}
  \begin{equation}
    \kappa^2 = \frac{(\alpha'  -\mu\alpha)(\beta'+\mu\beta)}{\alpha\beta} =
    \frac{\alpha'}\alpha\frac{\beta'}\beta - \mu\frac{\beta'}\beta +
    \mu\frac{\alpha'}\alpha - \mu^2 
  \end{equation}
  \begin{equation}
    \lambda_1' = -\mu^2 + 2  \frac{\alpha'}\alpha\frac{\beta'}\beta- 2 \mu(\frac{\beta'}\beta -
    \frac{\alpha'}\alpha)  -(\frac{\alpha'}\alpha)^2 -(
    \frac{\beta'}\beta)^2  = -\mu_1^2
  \end{equation}
\end{proof}

\begin{remark}\label{rem:T} A solution $(a,b,k)$ of system $(S)$ \eqref{eq:S}
corresponds to a solution $(\alpha, \beta,\kappa) = (a,-ib,-ik)$ of system
$(T)$ \eqref{eq:T}, and to a solution  $(\alpha, \beta,\kappa)$ of $(T)$ we
can switch to the solution $(\alpha,i\beta,i\kappa)$ of  $(S)$, having the
same logarithmic derivatives with respect to $x$. Hence it is just a matter of
arbitrary choice to consider the Darboux transformations to be associated to a
specific solution of $(T)$ rather than to a specific solution of
$(S)$. Moreover, the same Darboux transformations \eqref{eq:Da}, \eqref{eq:Db}
which are  associated to a given $(\alpha,\beta,\kappa)$ but now applied to a
triple $(\gamma,\delta, \xi)$, solution of $(T)$, produces a solution of the
transformed system
\begin{subequations}
  \begin{align}
  (T_1) \left\{\quad\begin{matrix}
    \gamma_1' - \mu_1 \gamma_1 &= \xi \delta_1\\
    \delta_1' + \mu_1 \delta_1 &= \xi \gamma_1
  \end{matrix}\right.
  \end{align}
\end{subequations}
of type $(T)$ associated to the new coefficient $\mu_1$.
\end{remark}

\begin{theorem}\label{thm:crumsystem}
Let there be given $n$ triples $(\alpha_j,\beta_j,\kappa_j)$, solutions of
the differential system $(T)$ \eqref{eq:T}. We assume that $\alpha_1$, \dots,
$\alpha_n$ are linearly independent, and $\beta_1$, \dots,
$\beta_n$ also. To each solution $(a,b,k)$ of the system 
\begin{equation}
(S) \left\{\quad  \begin{matrix}
    a' - \mu a &= - k b\\
    b' + \mu b &= +k a
  \end{matrix}\right.
\end{equation}
we associate
\begin{subequations}
  \begin{align}\label{eq:crum1}
      a_n &= \frac{W(\alpha_1, \dots, \alpha_n, a)}{W(\alpha_1, \dots,
      \alpha_n)}\\ \label{eq:crum2}
b_n &=  \frac{W(\beta_1, \dots, \beta_n, b)}{W(\beta_1, \dots,
      \beta_n)}
  \end{align}
\end{subequations}
Going from $(a,b)$ to $(a_n,b_n)$ is the result of the $n$ successive
simultaneous Darboux transformations \eqref{eq:Da} and \eqref{eq:Db}
associated to $(\alpha_1,\beta_1)$, \dots, $(\alpha_n,\beta_n)$ (themselves
transformed along the way). There holds:
\begin{equation}
(S_n) \left\{\quad  \begin{matrix}
    a_n' - \mu_n\; a_n &= - k b_n\\
    b_n' + \mu_n\; b_n &= +k a_n
  \end{matrix}\right.
\end{equation}
where the coefficient $\mu_n$ is given by:
  \begin{equation}\label{eq:muN}
\mu_n = \mu - \frac{d}{dx}\log \frac{W(\alpha_1, \dots,
      \alpha_n)}{W(\beta_1, \dots,
      \beta_n)}
\end{equation}
If furthermore one chooses a tau-function such that $\mu^2 = - (\log \tau)''$
then 
\begin{equation}
\mu_n^2 = - \frac{d^2}{dx^2}\log \tau_n
\end{equation}
where
\begin{equation}\label{eq:tauN}
\tau_n = \tau\cdot W(\alpha_1, \dots,\alpha_n)W(\beta_1, \dots,
      \beta_n)
\end{equation}
\end{theorem}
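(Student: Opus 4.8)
The plan is to argue by induction on $n$, peeling off the last simultaneous Darboux transformation and reducing to Theorem~\ref{thm:simdarboux}. For $n=1$ the assertion \emph{is} Theorem~\ref{thm:simdarboux}: since $W(\alpha_1)=\alpha_1$, $W(\beta_1)=\beta_1$ and $W(\alpha_1,a)=\alpha_1a'-\alpha_1'a$, the formulas \eqref{eq:crum1}, \eqref{eq:crum2} reduce to \eqref{eq:Da}, \eqref{eq:Db}, while \eqref{eq:muN} and \eqref{eq:tauN} read $\mu_1=\mu-\frac{d}{dx}\log(\alpha_1/\beta_1)$ and $\tau_1=\tau\alpha_1\beta_1$. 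For the inductive step I would first perform the $n-1$ transformations attached to $(\alpha_1,\beta_1),\dots,(\alpha_{n-1},\beta_{n-1})$. Applied to $(a,b,k)$ the induction hypothesis yields a solution $(a_{n-1},b_{n-1},k)$ of $(S_{n-1})$ with $a_{n-1}=W(\alpha_1,\dots,\alpha_{n-1},a)/W(\alpha_1,\dots,\alpha_{n-1})$, $b_{n-1}=W(\beta_1,\dots,\beta_{n-1},b)/W(\beta_1,\dots,\beta_{n-1})$, coefficient $\mu_{n-1}=\mu-\frac{d}{dx}\log\bigl(W(\alpha_1,\dots,\alpha_{n-1})/W(\beta_1,\dots,\beta_{n-1})\bigr)$ and $\tau_{n-1}=\tau\,W(\alpha_1,\dots,\alpha_{n-1})\,W(\beta_1,\dots,\beta_{n-1})$. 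By Remark~\ref{rem:T} the very same transformations apply to the triple $(\alpha_n,\beta_n,\kappa_n)$, a solution of $(T)$, and produce a solution $(\tilde\alpha_n,\tilde\beta_n,\kappa_n)$ of the system $(T_{n-1})$ attached to $\mu_{n-1}$, with $\tilde\alpha_n=W(\alpha_1,\dots,\alpha_n)/W(\alpha_1,\dots,\alpha_{n-1})$ and $\tilde\beta_n=W(\beta_1,\dots,\beta_n)/W(\beta_1,\dots,\beta_{n-1})$ (the induction hypothesis read through Remark~\ref{rem:T}). The linear independence assumptions ensure, by analyticity, that none of the Wronskians involved vanishes identically, so all these quotients make sense and are not identically zero.

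The algebraic heart of the step is the classical Crum--Jacobi Wronskian identity
\[
W(g_1,\dots,g_{m},p,q)\cdot W(g_1,\dots,g_{m})=W\bigl(W(g_1,\dots,g_m,p),\,W(g_1,\dots,g_m,q)\bigr),
\]
together with the elementary homogeneity $W(cf_1,\dots,cf_r)=c^{\,r}\,W(f_1,\dots,f_r)$. Taking $g_j=\alpha_j$ for $j\le n-1$, $p=\alpha_n$, $q=a$, and then pulling out $c=W(\alpha_1,\dots,\alpha_{n-1})$ via the homogeneity, one gets $W(\alpha_1,\dots,\alpha_n,a)=c\,W(\tilde\alpha_n,a_{n-1})$ and $W(\alpha_1,\dots,\alpha_n)=c\,\tilde\alpha_n$, hence
\[
a_n=\frac{W(\alpha_1,\dots,\alpha_n,a)}{W(\alpha_1,\dots,\alpha_n)}=\frac{W(\tilde\alpha_n,a_{n-1})}{\tilde\alpha_n}=a_{n-1}'-\frac{\tilde\alpha_n'}{\tilde\alpha_n}\,a_{n-1},
\]
i.e. $a_n$ is exactly the single Darboux transform \eqref{eq:Da} of $a_{n-1}$ associated to $\tilde\alpha_n$; symmetrically $b_n=W(\tilde\beta_n,b_{n-1})/\tilde\beta_n$ is \eqref{eq:Db} applied to $b_{n-1}$ with $\tilde\beta_n$. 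This proves the description of $(a_n,b_n)$ as the result of $n$ successive simultaneous Darboux transformations. Applying Theorem~\ref{thm:simdarboux} to $(a_{n-1},b_{n-1},k)$, a solution of $(S_{n-1})$, with the solution $(\tilde\alpha_n,\tilde\beta_n,\kappa_n)$ of $(T_{n-1})$, gives that $(a_n,b_n,k)$ solves $(S_n)$ with $\mu_n=\mu_{n-1}-\frac{d}{dx}\log(\tilde\alpha_n/\tilde\beta_n)$ and, when a tau-function is chosen, $\tau_n=\tau_{n-1}\,\tilde\alpha_n\tilde\beta_n$ with $\mu_n^2=-(\log\tau_n)''$.

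It remains only to collapse the telescoping. Since $\tilde\alpha_n/\tilde\beta_n=\dfrac{W(\alpha_1,\dots,\alpha_n)}{W(\beta_1,\dots,\beta_n)}\cdot\dfrac{W(\beta_1,\dots,\beta_{n-1})}{W(\alpha_1,\dots,\alpha_{n-1})}$, the new correction to $\mu$ cancels the $\mu_{n-1}$ correction except for the top term, giving $\mu_n=\mu-\frac{d}{dx}\log\bigl(W(\alpha_1,\dots,\alpha_n)/W(\beta_1,\dots,\beta_n)\bigr)$, which is \eqref{eq:muN}; and $\tau_n=\tau\,W(\alpha_1,\dots,\alpha_{n-1})W(\beta_1,\dots,\beta_{n-1})\cdot\tilde\alpha_n\tilde\beta_n=\tau\,W(\alpha_1,\dots,\alpha_n)W(\beta_1,\dots,\beta_n)$, which is \eqref{eq:tauN}. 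I expect the only real obstacle to be setting up the Crum--Jacobi Wronskian identity cleanly (a bordered-determinant identity of Sylvester type) and keeping the bookkeeping straight --- in particular the fact, supplied by Remark~\ref{rem:T}, that the auxiliary triples $(\alpha_n,\beta_n,\kappa_n)$ transform under the first $n-1$ steps into $(\tilde\alpha_n,\tilde\beta_n,\kappa_n)$ with the displayed Wronskian expressions; once that is in place, the remaining identities follow by the single-step Theorem~\ref{thm:simdarboux} and the telescoping above.
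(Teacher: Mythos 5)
Your proof is correct, and it follows the same overall skeleton as the paper's: iterate the one-step Theorem~\ref{thm:simdarboux} (keeping track, via Remark~\ref{rem:T}, of how the auxiliary triples $(\alpha_j,\beta_j,\kappa_j)$ are themselves transformed along the way) and then telescope the resulting products of logarithmic derivatives to get \eqref{eq:muN} and \eqref{eq:tauN}. The one genuine difference is how the Wronskian quotients \eqref{eq:crum1}--\eqref{eq:crum2} are identified with the composite of the $n$ successive Darboux maps: the paper explicitly declines to reprove this and imports it as ``one part of the Theorem of Crum,'' whereas you derive it inside the induction from the Jacobi--Sylvester identity $W(g_1,\dots,g_m,p,q)\,W(g_1,\dots,g_m)=W\bigl(W(g_1,\dots,g_m,p),W(g_1,\dots,g_m,q)\bigr)$ together with the homogeneity $W(hf_1,\dots,hf_r)=h^r W(f_1,\dots,f_r)$ (valid for a function $h$, which is what you need since $c=W(\alpha_1,\dots,\alpha_{n-1})$ is not constant --- your computation is right on this point). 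This makes your argument self-contained modulo that classical determinant identity, at the cost of having to state and justify it; the paper's version is shorter by outsourcing exactly this step to \cite{Crum}. You are also slightly more careful than the paper about why the intermediate $\tilde\alpha_j$, $\tilde\beta_j$ are not identically zero (linear independence plus analyticity forcing the Wronskians to be nonzero), a point the paper passes over in silence. Everything else --- the base case, the telescoping of $\mu_n$ and of $\tau_n$ --- matches the paper's reasoning.
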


\begin{proof}  Let us consider first the simultaneous Darboux transformations
of system $(S)$ \eqref{eq:S} and of its partner $(T)$ \eqref{eq:T}, defined by
$(\alpha_1,\beta_1)$. Let us write in particular
$(\alpha_2^{(1)},\beta_2^{(1)})$ for the transform  of the couple
$(\alpha_2,\beta_2)$. We then apply the associated Darboux transformations to
$(S_1)$ giving rise to $(S_2)$. The couple $(\alpha_3,\beta_3)$ is transformed
into a solution $(\alpha_3^{(2)},\beta_3^{(2)})$ of partner $(T_2)$. Etc\dots
Although we speak of transformed systems to keep track of the coupling, each
of the associated Schr\"odinger equation $- f'' + V^\pm f = k^2 f$ is
transformed by $f \mapsto f' - \frac{g'}g f$ where $g$ is a solution of $ -g''
+ V^\pm g = -\kappa^2 g$, hence independently of what happens to the other
equation $-\phi'' + V^\mp \phi = k^2\phi$. One part of the  Theorem of Crum \cite{Crum}
(which we do not reprove here) tells us that if we apply $n$ successive
Darboux transformations  $f \mapsto f' - \frac{g'}g f$, first by $g_1$, then
by the transformed $g_2$, then by the transformed $g_3$, \dots the final
action can be written directly as:
\begin{equation}
  \label{eq:crum}
  f \mapsto \frac{W(g_1, \dots, g_n, f)}{W(g_1, \dots, g_n)}
\end{equation}
Hence definitions \eqref{eq:crum1} and \eqref{eq:crum2} of $a_n$ and $b_n$ can
be viewed as the final result of the $n$ successive simultaneous Darboux
transformations. Theorem
\ref{thm:simdarboux} tells us how $\mu$ changes when system $(S)$ is
transformed once, hence iterative use of the Theorem gives a formula for
$\mu_n$ involving in fact telescopic products of quotients of Wronskians,
hence equation \eqref{eq:muN}. Moreover if a tau function is initially chosen
with $-(\log\tau)'' = \mu^2$, Theorem \ref{thm:simdarboux} can again be
applied iteratively, leading to a function $\tau_n$ given by \eqref{eq:tauN},
and verifying $-(\log\tau_n)'' = \mu_n^2$. 
\end{proof}

Let us take note that $\mu_n^2 + \mu_n' = - (\log\tau)'' - (\log W(\alpha_1,
\dots,\alpha_n))'' - (\log W(\beta_1, \dots,
      \beta_n))'' + \mu' - \frac{d^2}{dx^2}\log \frac{W(\alpha_1, \dots,
      \alpha_n)}{W(\beta_1, \dots,
      \beta_n)} = \mu^2 + \mu' - 2 \frac{d^2}{dx^2}\log W(\alpha_1,
\dots,\alpha_n)$. And similarly $\mu_n^2 - \mu_n' =  \mu^2- \mu' - 2 \frac{d^2}{dx^2}\log W(\beta_1,
\dots,\beta_n)$. Thus: 

\begin{corollary}
Using the notations of Theorem \ref{thm:crumsystem}, there holds
\begin{subequations}
  \begin{align}
  -a_n'' + V_n^+ a_n  &= k^2 a_n\\
-b_n'' + V_n^- b_n &= k^2 b_n
  \end{align}
\end{subequations}
with
\begin{subequations}
  \begin{align}
  V_n^+ &= V^+ - 2\frac{d^2}{dx^2} \log W(\alpha_1, \dots,
      \alpha_n)\\
  V_n^- &= V^- - 2\frac{d^2}{dx^2} \log W(\beta_1, \dots,
      \beta_n)
  \end{align}
\end{subequations}
\end{corollary}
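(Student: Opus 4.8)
The corollary is essentially a book-keeping consequence of what has already been set up, so my plan is to make the ``telescoping'' remark that precedes the statement fully explicit and then read off the two Schr\"odinger equations. First I would recall from Theorem~\ref{thm:crumsystem} that $\mu_n^2 = -(\log\tau_n)''$ with $\tau_n = \tau\cdot W(\alpha_1,\dots,\alpha_n)\,W(\beta_1,\dots,\beta_n)$, and that $\mu_n = \mu - \frac{d}{dx}\log\frac{W(\alpha_1,\dots,\alpha_n)}{W(\beta_1,\dots,\beta_n)}$. Differentiating the latter once gives $\mu_n' = \mu' - (\log W(\alpha_1,\dots,\alpha_n))'' + (\log W(\beta_1,\dots,\beta_n))''$. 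Adding and subtracting, and using $-(\log\tau)'' = \mu^2$ together with $(\log\tau_n)'' = (\log\tau)'' + (\log W(\alpha_1,\dots,\alpha_n))'' + (\log W(\beta_1,\dots,\beta_n))''$, I get
\begin{equation*}
\mu_n^2 + \mu_n' = \mu^2 + \mu' - 2\,\frac{d^2}{dx^2}\log W(\alpha_1,\dots,\alpha_n),
\qquad
\mu_n^2 - \mu_n' = \mu^2 - \mu' - 2\,\frac{d^2}{dx^2}\log W(\beta_1,\dots,\beta_n),
\end{equation*}
which is exactly the computation displayed just before the corollary.

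Next I would invoke the identities \eqref{eq:schrod+} and \eqref{eq:schrod-}, namely $V^+ = \mu^2 + \mu'$ and $V^- = \mu^2 - \mu'$, and the analogous relations $V_n^+ = \mu_n^2 + \mu_n'$, $V_n^- = \mu_n^2 - \mu_n'$ that hold because $(a_n,b_n,k)$ solves the Krein system $(S_n)$ of Theorem~\ref{thm:crumsystem} (the coupling of a Krein system into two Schr\"odinger equations, \eqref{eq:schrod+}--\eqref{eq:schrod-}, is a purely formal manipulation that applies verbatim to $(S_n)$ with $\mu$ replaced by $\mu_n$). Substituting the two boxed formulas above then yields
\begin{equation*}
V_n^+ = V^+ - 2\,\frac{d^2}{dx^2}\log W(\alpha_1,\dots,\alpha_n),
\qquad
V_n^- = V^- - 2\,\frac{d^2}{dx^2}\log W(\beta_1,\dots,\beta_n),
\end{equation*}
and the equations $-a_n'' + V_n^+ a_n = k^2 a_n$ and $-b_n'' + V_n^- b_n = k^2 b_n$ follow immediately, since $(a_n,b_n,k)$ solves $(S_n)$.

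There is really no serious obstacle here; the only point needing a word of care is that the Wronskians appearing in $\mu_n$ and in $\tau_n$ are the \emph{original} $W(\alpha_1,\dots,\alpha_n)$ and $W(\beta_1,\dots,\beta_n)$, not the partially transformed intermediate Wronskians — this is precisely the content of Crum's formula \eqref{eq:crum} as used in the proof of Theorem~\ref{thm:crumsystem}, so the telescoping of logarithmic-derivative corrections is legitimate. One should also note that the result is consistent with the classical Crum theorem for a single Schr\"odinger equation: indeed $V_n^+$ is obtained from $V^+$ by $n$ Darboux steps using the solutions $\alpha_1,\dots,\alpha_n$ of $-g'' + V^+ g = -\kappa_j^2 g$, and Crum's potential formula gives exactly $V^+ - 2(\log W(\alpha_1,\dots,\alpha_n))''$; the present corollary simply records that the \emph{linked} construction of Theorem~\ref{thm:crumsystem} reproduces, on each of the two Schr\"odinger factors separately, the ordinary Crum formula, while simultaneously tracking the shared tau-function.
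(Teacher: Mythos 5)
Your proof is correct and follows essentially the same route as the paper: the paper's own justification is precisely the displayed computation of $\mu_n^2+\mu_n'$ and $\mu_n^2-\mu_n'$ from the formulas for $\mu_n$ and $\tau_n$ in Theorem \ref{thm:crumsystem}, combined with the fact that the Krein system $(S_n)$ couples into two Schr\"odinger equations with potentials $\mu_n^2\pm\mu_n'$ exactly as in \eqref{eq:schrod+}--\eqref{eq:schrod-}. Your added remarks on the telescoping of Wronskians and the consistency with the classical Crum potential formula match the paper's framing as well.
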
 
These formulas are the part of Crum's Theorem \cite{Crum}
regarding the effect of successive Darboux transformations on the potentials
of Schr\"odinger equations. 

\section{Modification of mu-functions by trivial zeros}

We are interested in Hilbert spaces $H_x$ of entire functions in the sense of
\cite{Bra}, whose reproducing kernels are given  by formula \eqref{eq:AB},
where the functions $A$ ($=A_x$) and $B$ ($=B_x$) are real valued on the real line, 
respectively even and odd, and obey a first order differential system with respect
to $x$ of the Krein type \cite{KreinScattering}, with a real valued
coefficient function $\mu(x)$:
\begin{subequations}
  \begin{align}\label{eq:diffABa}
  \frac{d}{dx} A_x(w) - \mu(x) A_x(w) &=  - w B_x(w)\\ \label{eq:diffABb}
\frac{d}{dx} B_x(w) + \mu(x) B_x(w) &= w A_x(w)
  \end{align}
\end{subequations}
\begin{remark} More general integral equations play the important general
structural role in \cite{Bra}.  We have found that the above restricted type
arises naturally in the study of some specific instances of Hilbert spaces of
entire functions \cite{cras2003}. It turns out to be  well adapted to the
present study of the classical Paley-Wiener spaces modified by adding trivial
zeros on the imaginary axis. If we remove the restriction for the zeros to lie
on the imaginary axis, the functions $A_x$ and $B_x$ real on the real line
will (generally speaking) cease to be respectively even and odd and they obey
the more general type of Krein system from \cite{KreinContinual} which has
both the real and imaginary parts of a complex valued $\mu$-function as
coefficients.
\end{remark}

We want to combine Theorem \ref{thm:3} and Theorem \ref{thm:crumsystem}. We
will suppose that the functions $A_x$ are even, the functions $B_x$ odd, and
the trivial zeros $z_i$, $1\leq i \leq n$, are purely imaginary and verify
$z_i \neq \pm z_j$ for all $i,j$.

From \eqref{eq:diffABa} and \eqref{eq:diffABb}: 
\begin{subequations}
  \begin{align}\label{eq:yy}
  &\left[(\frac{d}{dx} + \mu)
  (\frac{d}{dx} - \mu)\right]^{2p} A_x = (-1)^p w^{2p} A_x \\
\label{eq:zz}
(\frac{d}{dx} - \mu)&\left[(\frac{d}{dx} + \mu)
  (\frac{d}{dx} - \mu)\right]^{2p} A_x = (-1)^{p+1} w^{2p+1} B_x
  \end{align}
\end{subequations} By recurrence the left side of \eqref{eq:yy} (resp. \eqref{eq:zz})
is $(\frac{d}{dx})^{(2p)} A_x$ (resp. $(\frac{d}{dx})^{(2p+1)} A_x$) up  to a
finite linear combination of lower derivatives of $A_x$ with coefficients
being function of $x$ (independent of $w$). Hence, for $n = 2m$:
\begin{equation}
 W(A_x(z_1), \dots, A_x(z_{n}), A_x(w)) =   \begin{vmatrix}
      A_x(z_1) & \dots & A_x(z_n) &A_x(w)\\
      z_1 B_x(z_1) & \dots & z_n B_x(z_n) & w B_x(w)\\
      z_1^2 A_x(z_1) & \dots & z_n^2 A_x(z_n) & w^2 A_x(w)\\
      \vdots & \dots & \dots& \vdots\\
z_1^{2m} A_x(z_1) & \dots & \dots & w^{2m} A_x(w)
    \end{vmatrix}_{(2m+1)\times(2m+1)} 
\end{equation}
and for $n = 2m+1$: 
\begin{equation}
 W(A_x(z_1), \dots, A_x(z_{n}), A_x(w)) =  (-1)^{m+1}  \begin{vmatrix}
      A_x(z_1) & \dots & A_x(z_n) &A_x(w)\\
      z_1 B_x(z_1) & \dots & z_n B_x(z_n) & w B_x(w)\\
      z_1^2 A_x(z_1) & \dots & z_n^2 A_x(z_n) & w^2 A_x(w)\\
      \vdots & \dots & \vdots& \vdots\\
z_1^{2m} A_x(z_1) & \dots & \dots & w^{2m} A_x(w)\\
z_1^{2m+1} B_x(z_1) & \dots & \dots & w^{2m+1} B_x(w)
    \end{vmatrix}_{(2m+2)\times(2m+2)} 
\end{equation} If we divide  the Wronskians (constructed with derivations with
respect to $x$) and the $(n+1)\times(n+1)$
determinants at the right by their respective $n\times n$ principal minors,
the resulting fractions will thus coincide up to  $(-1)^m$ for $n=2m$ and
$(-1)^{m+1}$ for $n = 2m+1$, hence in both cases up to $(-1)^{\frac12 n(n+1)}$. We
can thus rewrite the function $\cA_\sigma$ of Theorem \ref{thm:3} as:
\begin{equation}
  \cA_\sigma(w) = (-1)^n \gamma(w)\gamma^*(w) \frac{W(A_x(z_1), A_x(z_2), \dots,
    A_x(z_n), A_x(w))}{W(A_x(z_1), A_x(z_2), \dots, A_x(z_n))}
\end{equation}

In the same manner
\begin{subequations}
  \begin{align}\label{eq:yyb}
  &\left[(\frac{d}{dx} - \mu)
  (\frac{d}{dx} + \mu)\right]^{2p} B_x = (-1)^p w^{2p} B_x \\
\label{eq:zzb}
(\frac{d}{dx} + \mu)&\left[(\frac{d}{dx} - \mu)
  (\frac{d}{dx} + \mu)\right]^{2p} B_x = (-1)^{p} w^{2p+1} A_x
  \end{align}
\end{subequations} By recurrence the left side of \eqref{eq:yyb} (resp. \eqref{eq:zzb})
is $(\frac{d}{dx})^{(2p)} B_x$ (resp. $(\frac{d}{dx})^{(2p+1)} B_x$) up  to a
finite linear combination of lower derivatives of $B_x$ with coefficients
being function of $x$ (independent of $w$). Hence, for $n = 2m$:
\begin{equation}
 W(B_x(z_1), \dots, B_x(z_{n}), B_x(w)) =   (-1)^m\begin{vmatrix}
      B_x(z_1) & \dots & B_x(z_n) &B_x(w)\\
      z_1 A_x(z_1) & \dots & z_n A_x(z_n) & w A_x(w)\\
      z_1^2 B_x(z_1) & \dots & z_n^2 B_x(z_n) & w^2 B_x(w)\\
      \vdots & \dots & \dots& \vdots\\
z_1^{2m} B_x(z_1) & \dots & \dots & w^{2m} B_x(w)
    \end{vmatrix}_{(2m+1)\times(2m+1)} 
\end{equation}
and for $n = 2m+1$: 
\begin{equation}
 W(B_x(z_1), \dots, B_x(z_{n}), B_x(w)) =  \begin{vmatrix}
      B_x(z_1) & \dots & B_x(z_n) &B_x(w)\\
      z_1 A_x(z_1) & \dots & z_n A_x(z_n) & w A_x(w)\\
      z_1^2 B_x(z_1) & \dots & z_n^2 B_x(z_n) & w^2 B_x(w)\\
      \vdots & \dots & \vdots& \vdots\\
z_1^{2m} B_x(z_1) & \dots & \dots & w^{2m} B_x(w)\\
z_1^{2m+1} A_x(z_1) & \dots & \dots & w^{2m+1} A_x(w)
    \end{vmatrix}_{(2m+2)\times(2m+2)} 
\end{equation}
If we divide  the Wronskians and the determinants at the right by their
respective $n\times n$ principal minors, the results will coincide up to 
$(-1)^m$ for $n=2m$ and $(-1)^{m}$ for $n = 2m+1$, hence in both cases up to
$(-1)^{\frac12 n(n-1)}$. We can  rewrite the function $\cB_\sigma$ of
Theorem \ref{thm:3} as:
\begin{equation}
  \cB_\sigma(w) = (-1)^n \gamma(w)\gamma^*(w) \frac{W(B_x(z_1), B_x(z_2), \dots,
    B_x(z_n), B_x(w))}{W(B_x(z_1), B_x(z_2), \dots, B_x(z_n))}
\end{equation}
where the Wronskians are constructed with derivations with respect to $x$.

Taking into account that $(-1)^n \gamma^*(w) = \gamma(-w)$  we can thus sum up
these computations in the following:

\begin{theorem}\label{thm:wronsk} Let there be given Hilbert spaces  $H_x$ of entire functions,
with functions $A_x$ (even, real on the real line)  and $B_x$ (odd, real on
the real line) computing the evaluators in $H_x$ by formula \eqref{eq:AB}, and
whose variations with respect to the parameter $x$ are given by:
\begin{subequations}
  \begin{align}
  \frac{d}{dx} A_x(w) - \mu(x) A_x(w) &=  - w B_x(w)\\
\frac{d}{dx} B_x(w) + \mu(x) B_x(w) &= w A_x(w)
  \end{align}
\end{subequations}
Let $\sigma = (z_1, \dots, z_n)$ be a finite sequence of purely imaginary
numbers (the associated evaluators in $H_x$ being supposed linearly
independent) with $z_i\neq \pm z_j$ for  $1\leq i, j \leq n$ and
let $H_x(\sigma)$ be the Hilbert space $H_x$ modified by $\sigma$.
Its evaluators $\cK_z$ are given by:
  \begin{equation}
    \cK_z(w) = (\cK_w,\cK_z) = 2 \frac{\overline{\cB_\sigma(z)}\cA_\sigma(w) - \overline{\cA_\sigma(z)}\cB_\sigma(w)}{\overline z -w}
  \end{equation}
with
\begin{subequations}
  \begin{align}
  \cA_{x,\sigma}(w) &= \gamma(w)\gamma(-w) \frac{W(A_x(z_1), A_x(z_2), \dots,
    A_x(z_n), A_x(w))}{W(A_x(z_1), A_x(z_2), \dots, A_x(z_n))}\\ 
  \cB_{x,\sigma}(w) &= \gamma(w)\gamma(-w) \frac{W(B_x(z_1), B_x(z_2), \dots,
    B_x(z_n), B_x(w))}{W(B_x(z_1), B_x(z_2), \dots, B_x(z_n))}
  \end{align}
\end{subequations}
where the Wronskians involve derivatives with respect to the variable $x$. The entire functions $\cA_{x,\sigma}$ and $\cB_{x,\sigma}$ are real on the
real line, and respectively even and odd.
\end{theorem}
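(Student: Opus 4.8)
The statement is a rewriting of Theorem \ref{thm:3}, whose hypotheses are exactly the ones imposed here (for each fixed $x$, take $H=H_x$, $A=A_x$, $B=B_x$, and note that $z_i\neq\pm z_j$ is precisely distinctness together with $z_i+z_j\neq0$). Theorem \ref{thm:3} already supplies the evaluator formula, the entireness and reality on $\RR$, and the parities, together with the determinantal expressions \eqref{eq:aa}--\eqref{eq:bb} for $\cA_\sigma$ and $\cB_\sigma$; so the whole task is to show that those ``power'' determinants, once normalized by their principal minors, coincide with the Wronskian quotients in $x$ written in the theorem. After that, since $\cA_{x,\sigma}$ and $\cB_{x,\sigma}$ are then literally the same entire functions as the $\cA_\sigma$ and $\cB_\sigma$ of Theorem \ref{thm:3}, all the remaining assertions follow with no further work.

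First I would exploit the Krein system \eqref{eq:diffABa}--\eqref{eq:diffABb} to control the $x$-derivatives of $A_x(w)$ and $B_x(w)$. Iterating the operators $\frac{d}{dx}\pm\mu$ yields, by a one-line induction, that $\frac{d^{2p}}{dx^{2p}}A_x(w)$ equals $(-1)^p w^{2p}A_x(w)$ modulo a linear combination of $A_x(w),\dots,\frac{d^{2p-1}}{dx^{2p-1}}A_x(w)$ whose coefficients are functions of $x$ alone, and $\frac{d^{2p+1}}{dx^{2p+1}}A_x(w)$ equals $(-1)^{p+1}w^{2p+1}B_x(w)$ modulo such a combination; symmetrically for $B_x$, with $w^{2p}B_x(w)$ and $w^{2p+1}A_x(w)$ as the leading terms. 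These are the recurrences \eqref{eq:yy}--\eqref{eq:zz} and \eqref{eq:yyb}--\eqref{eq:zzb}.

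Next, in the Wronskian $W(A_x(z_1),\dots,A_x(z_n),A_x(w))$, whose row $k$ ($0\le k\le n$) lists the $k$-th $x$-derivatives at $z_1,\dots,z_n,w$, I would run these expansions top to bottom, subtracting from each row $k$ the combination of the already-reduced rows $0,\dots,k-1$ prescribed above. The correction coefficients do not involve $w$, so a single elementary row operation works simultaneously in every column, and afterwards row $k$ is a fixed power of $-1$ times the ``pure'' row $(z_1^k A_x(z_1),\dots,w^k A_x(w))$ when $k$ is even and $(z_1^k B_x(z_1),\dots,w^k B_x(w))$ when $k$ is odd; pulling out the scalars identifies the Wronskian with a signed multiple of the $(n+1)\times(n+1)$ determinant in \eqref{eq:aa}. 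The same reduction applied to $W(A_x(z_1),\dots,A_x(z_n))$ gives the corresponding signed multiple of its principal $n\times n$ minor $\det U_n$. In the quotient all sign factors telescope away except the one contributed by row $n$, namely $(-1)^{n(n+1)/2}$; combined with the prefactor $(-1)^{n(n-1)/2}$ already present in \eqref{eq:aa} this leaves $(-1)^{n^2}=(-1)^n$, that is
\[
  \cA_\sigma(w)=(-1)^n\,\gamma(w)\gamma^*(w)\,\frac{W(A_x(z_1),\dots,A_x(z_n),A_x(w))}{W(A_x(z_1),\dots,A_x(z_n))},
\]
and the identical computation with \eqref{eq:yyb}--\eqref{eq:zzb} and the prefactor $(-1)^{n(n+1)/2}$ of \eqref{eq:bb} gives the same relation for $\cB_\sigma$ with the $B$-Wronskians.

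Finally, since the $z_i$ are purely imaginary, $\overline{z_i}=-z_i$, hence $\gamma^*(w)=\prod_i(w+z_i)^{-1}$ and $\gamma(-w)=(-1)^n\gamma^*(w)$; so $(-1)^n\gamma(w)\gamma^*(w)=\gamma(w)\gamma(-w)$, which converts the two displayed relations into the asserted formulas for $\cA_{x,\sigma}$ and $\cB_{x,\sigma}$. The only step needing genuine care is the sign bookkeeping in the third paragraph: one must check that the per-row signs coming from the recurrences, the partial product cut off at row $n-1$ that normalizes the principal minor, and the two prefactors $(-1)^{n(n-1)/2}$ and $(-1)^{n(n+1)/2}$ of Theorem \ref{thm:3} really do collapse to a uniform $(-1)^n$. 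Splitting into the cases $n=2m$ and $n=2m+1$, exactly as in the discussion preceding the theorem, reduces this to a mechanical verification.
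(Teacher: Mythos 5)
Your proposal is correct and follows essentially the same route as the paper: the paper proves this theorem in the discussion preceding its statement by using the Krein system recurrences \eqref{eq:yy}--\eqref{eq:zz} and \eqref{eq:yyb}--\eqref{eq:zzb} to convert the power determinants of Theorem \ref{thm:3} into $x$-Wronskians, tracking the signs, and using $(-1)^n\gamma^*(w)=\gamma(-w)$. Your sign bookkeeping (only the row-$n$ sign survives in the quotient, giving $(-1)^{n(n+1)/2}$ for the $A$-case) is a slightly slicker packaging of the paper's case split $n=2m$ versus $n=2m+1$, and it checks out.
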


Taking into account Theorem \ref{thm:crumsystem} we thus learn that:

\begin{theorem}\label{thm:xxx} Let there be given Hilbert spaces  $H_x$ of entire functions,
functions $A_x$ and $B_x$, imaginary numbers $z_1, z_2, \dots$ verifying the
hypotheses of  Theorem \ref{thm:wronsk}. Let  $H_x(n) = H_x(z_1, \dots, z_n)$
and let  the 
functions $\cA_{x,n}$ and $\cB_{x,n}$ computing the reproducing kernel in
  $H_x(n)$ be provided by Theorem
\ref{thm:3}. They are obtained by successive transformations (essentially) of Darboux type:
  \begin{subequations}
  \begin{align}\label{eq:dna}
    \cA_{x,n+1}(w) &= \frac1{z_{n+1}^2 - w^2} \left(\frac{d}{dx}\cA_{x,n}(w) -
      \frac{\frac{d}{dx}\cA_{x,n}(z_{n+1})}{\cA_{x,n}(z_{n+1})}\cA_{x,n}(w)\right)\\ 
\label{eq:dnb}    \cB_{x,n+1}(w) &= \frac1{z_{n+1}^2 - w^2} \left(\frac{d}{dx}\cB_{x,n}(w) -
      \frac{\frac{d}{dx}\cB_{x,n}(z_{n+1})}{\cB_{x,n}(z_{n+1})}\cB_{x,n}(w)\right)
    \end{align}
\end{subequations} 
and verify the equations
  \begin{subequations}
  \begin{align}\label{eq:muna}
    \frac{d}{dx}\cA_{x,n}(w) - \mu_{n}(x)\; \cA_{x,n}(w) &= - w \cB_{x,n}(w)\\
\label{eq:munb}    \frac{d}{dx}\cB_{x,n}(w) + \mu_{n}(x)\; \cB_{x,n}(w) &= + w
    \cA_{x,n}(w)
    \end{align}
\end{subequations}
  with 
  \begin{equation}
    \mu_n = \mu - \frac{d}{dx}\log \frac{W(A_x(z_1), \dots,
      A_x(z_n))}{W(B_x(z_1), \dots,
      B_x(z_n))}
  \end{equation}
If a
function $\tau$ is chosen with $\mu^2 = - \frac{d^2}{dx^2}\log \tau$ then
$\mu_n^2 = - \frac{d^2}{dx^2}\log \tau_n$ with
  \begin{equation}
    \tau_n = \tau\cdot W(A_x(z_1), \dots,
    A_x(z_n)) W(-iB_x(z_1), \dots,
    -iB_x(z_n))
  \end{equation}
\end{theorem}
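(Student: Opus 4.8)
The plan is to recognize the whole construction as a single instance of Theorem~\ref{thm:crumsystem}, applied to the Krein system satisfied in the variable $x$ by $A_x$ and $B_x$, and then to transport the conclusion back through Theorem~\ref{thm:wronsk}, carefully tracking the several normalizing constants.

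\emph{Setting up the Crum data.} By \eqref{eq:diffABa}--\eqref{eq:diffABb}, for every complex parameter $w$ the triple $(a,b,k)=(A_x(w),B_x(w),w)$ solves the system $(S)$ \eqref{eq:S} with coefficient $\mu=\mu(x)$; in particular so does $(A_x(z_i),B_x(z_i),z_i)$ for each $i$, and by Remark~\ref{rem:T} the triple $(\alpha_i,\beta_i,\kappa_i):=(A_x(z_i),-iB_x(z_i),-iz_i)$ solves the partner system $(T)$ \eqref{eq:T}. Since each $z_i$ is purely imaginary, $\kappa_i$ is real, and since $A_x$ is even and $B_x$ odd (both real on $\RR$) the functions $\alpha_i,\beta_i$ of $x$ are real-valued. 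The linear-independence hypotheses of Theorem~\ref{thm:crumsystem} follow from the assumed linear independence of the evaluators $Z_{z_i}$ in $H_x$: by \eqref{eq:Gn} the Gram determinant $G_n$ is a nonzero constant times $\det U_n\,\det V_n$, and, as observed in the derivation of Theorem~\ref{thm:wronsk}, $\det U_n$ and $\det V_n$ equal $W(A_x(z_1),\dots,A_x(z_n))$ and $W(B_x(z_1),\dots,B_x(z_n))$ up to sign; these Wronskians are therefore nonzero on the relevant range of $x$ (an open condition), and likewise the evaluations $\cA_{x,m}(z_{m+1})$, $\cB_{x,m}(z_{m+1})$ occurring below.

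\emph{Applying Theorem~\ref{thm:crumsystem}.} With the test solution $(a,b,k)=(A_x(w),B_x(w),w)$, formulas \eqref{eq:crum1}--\eqref{eq:crum2} give $a_n=W(A_x(z_1),\dots,A_x(z_n),A_x(w))/W(A_x(z_1),\dots,A_x(z_n))$ and, after cancelling the factor $(-i)^n$ carried by every column built from $\beta_i=-iB_x(z_i)$, $b_n=W(B_x(z_1),\dots,B_x(z_n),B_x(w))/W(B_x(z_1),\dots,B_x(z_n))$; moreover $(a_n,b_n,w)$ solves $(S_n)$ with $\mu_n=\mu-\frac{d}{dx}\log\bigl(W(\alpha_1,\dots,\alpha_n)/W(\beta_1,\dots,\beta_n)\bigr)$, where the two $(-i)^n$'s cancel inside the logarithm and leave the asserted formula for $\mu_n$. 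By Theorem~\ref{thm:wronsk}, $\cA_{x,n}(w)=\gamma(w)\gamma(-w)\,a_n(w)$ and $\cB_{x,n}(w)=\gamma(w)\gamma(-w)\,b_n(w)$ with $\gamma(w)\gamma(-w)=\prod_{i=1}^n(z_i^2-w^2)^{-1}$ \emph{independent of $x$}; multiplying the two lines of $(S_n)$ by this prefactor yields \eqref{eq:muna}--\eqref{eq:munb}. For the tau-function, Theorem~\ref{thm:crumsystem} gives $\mu_n^2=-(\log\tau_n)''$ with $\tau_n=\tau\,W(\alpha_1,\dots,\alpha_n)W(\beta_1,\dots,\beta_n)=\tau\,W(A_x(z_1),\dots,A_x(z_n))\,W(-iB_x(z_1),\dots,-iB_x(z_n))$, exactly as claimed (keeping the $-i$'s makes $\tau_n$ real when $\tau$ is).

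\emph{The Darboux recursion.} Finally, passing from the $n$-step Crum transform to the $(n+1)$-step one is a single simultaneous Darboux transformation \eqref{eq:Da}--\eqref{eq:Db} associated to the $n$-times transformed pair $(\alpha_{n+1},\beta_{n+1})$; writing $\gamma_m(w)=\prod_{i=1}^m(w-z_i)^{-1}$ for the analogue of $\gamma$ built from $z_1,\dots,z_m$, one has $\cA_{x,m}(w)=\gamma_m(w)\gamma_m(-w)\,a_m(w)$ and $\gamma_{m+1}(w)\gamma_{m+1}(-w)=(z_{m+1}^2-w^2)^{-1}\gamma_m(w)\gamma_m(-w)$. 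By the Crum formula \eqref{eq:crum}, the $n$-th transform of $\alpha_{n+1}=A_x(z_{n+1})$ is $W(\alpha_1,\dots,\alpha_n,\alpha_{n+1})/W(\alpha_1,\dots,\alpha_n)=a_n(z_{n+1})$, whose logarithmic derivative in $x$ is $\frac{d}{dx}\cA_{x,n}(z_{n+1})/\cA_{x,n}(z_{n+1})$ (the $x$-independent constant $\gamma_n(z_{n+1})\gamma_n(-z_{n+1})$ cancelling in the ratio). Hence $a_{n+1}(w)=\frac{d}{dx}a_n(w)-\frac{\frac{d}{dx}\cA_{x,n}(z_{n+1})}{\cA_{x,n}(z_{n+1})}\,a_n(w)$, and multiplying by $\gamma_{n+1}(w)\gamma_{n+1}(-w)$ turns this into \eqref{eq:dna}; the computation for $\cB$ is identical, the bookkeeping $-i$'s again cancelling in the logarithmic derivative. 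I expect the only genuine delicacy to be the accounting of these normalizing constants — the $x$-independent factor $\gamma(w)\gamma(-w)$, the $(-i)^n$ coming from the $\beta_i$, and the signs incurred when the determinants of Theorem~\ref{thm:3} are rewritten as Wronskians with respect to $x$ — together with the mild point that the Crum machinery needs the relevant Wronskians and the evaluations $\cA_{x,m}(z_{m+1})$, $\cB_{x,m}(z_{m+1})$ to remain nonvanishing throughout the $x$-range under consideration.
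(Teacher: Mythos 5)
Your proposal is correct and follows exactly the route the paper itself takes: the paper presents Theorem~\ref{thm:xxx} as an immediate consequence of combining Theorem~\ref{thm:wronsk} with Theorem~\ref{thm:crumsystem} (via Remark~\ref{rem:T} for the passage to the $(T)$-system with $(\alpha_i,\beta_i,\kappa_i)=(A_x(z_i),-iB_x(z_i),-iz_i)$), and offers no further written proof. Your careful tracking of the $x$-independent prefactor $\gamma(w)\gamma(-w)$, the $(-i)^n$ normalizations, and the nonvanishing of the Wronskians simply makes explicit the bookkeeping the paper leaves to the reader.
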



In the following, the index $x$ shall be dropped from the notations.
Combining \eqref{eq:dna} with \eqref{eq:muna} we obtain:
\begin{equation}
  \begin{split}
    (z_{n+1}^2 - w^2)\cA_{n+1}(w) = \mu_n\cA_n(w) - w\cB_n(w) - (\mu_n -
    z_{n+1}\frac{\cB_n(z_{n+1})}{\cA_n(z_{n+1})})\cA_n(w) \\= -(z_{n+1}+w)\cB_n(w)
    +\frac{z_{n+1}}{\cA_n(z_{n+1})}\left( \cB_n(z_{n+1})\cA_n(w)+
      \cA_n(z_{n+1})\cB_n(w)\right)\\ = -(z_{n+1}+w)\cB_n(w)
    +\frac{z_{n+1}}{2\cA_n(z_{n+1})} (z_{n+1} + w) K^n({z_{n+1}},w)
  \end{split}
\end{equation}
We have written $K^n(z,w) = K^n_z(w)$ for the evaluator in $H(n) = H(z_1, \dots,
z_n)$. 
Combining \eqref{eq:dnb} with \eqref{eq:munb} gives:
\begin{equation}
  \begin{split}
    (z_{n+1}^2 - w^2)\cB_{n+1}(w) = -\mu_n\cB_n(w) + w\cA_n(w) - (-\mu_n +
    z_{n+1}\frac{\cA_n(z_{n+1})}{\cB_n(z_{n+1})})\cB_n(w) \\= (z_{n+1}+w)\cA_n(w)
    -\frac{z_{n+1}}{\cB_n(z_{n+1})}\left( \cA_n(z_{n+1})\cB_n(w)+
      \cB_n(z_{n+1})\cA_n(w)\right)\\ = (z_{n+1}+w)\cA_n(w)
    -\frac{z_{n+1}}{2\cB_n(z_{n+1})} (z_{n+1} + w) K^n({z_{n+1}},w)
  \end{split}
\end{equation}
We thus have the identities:
\begin{theorem}\label{thm:ntonplus1}
Let $H = H_x$, $\cA_n$, $\cB_n$, for $n\geq 1$  be as in Theorem
\ref{thm:xxx}. There holds 
  \begin{subequations}
  \begin{align}
    (w- z_{n+1})\cA_{n+1}(w) &= \cB_n(w) - \frac{z_{n+1}}{2\cA_n(z_{n+1})}
    K^n(z_{n+1},w) \\ (w - z_{n+1})\cB_{n+1}(w) &= -\cA_n(w) +
    \frac{z_{n+1}}{2\cB_n(z_{n+1})} K^n(z_{n+1},w)
    \end{align}
\end{subequations}
where $K^n(z,w) = K^n_z(w)$ is the reproducing kernel in $H(n) = H(z_1, \dots,
z_n)$. 
\end{theorem}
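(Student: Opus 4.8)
The plan is to read both identities directly off the first-order data already assembled, namely the Darboux-type recursions \eqref{eq:dna}--\eqref{eq:dnb} and the Krein system \eqref{eq:muna}--\eqref{eq:munb} supplied by Theorem~\ref{thm:xxx}, together with the explicit shape \eqref{eq:Kn} of the reproducing kernel coming from Theorem~\ref{thm:3}. Concretely, I would start from \eqref{eq:dna}, multiply through by $z_{n+1}^2-w^2$, and then use \eqref{eq:muna} twice: once to replace $\frac{d}{dx}\cA_n(w)$ by $\mu_n\cA_n(w)-w\cB_n(w)$, and once specialized at $w=z_{n+1}$ to evaluate the logarithmic derivative $\frac{d}{dx}\cA_n(z_{n+1})/\cA_n(z_{n+1}) = \mu_n - z_{n+1}\cB_n(z_{n+1})/\cA_n(z_{n+1})$ (this uses $\cA_n(z_{n+1})\neq 0$, which is part of the standing hypotheses carried into Theorem~\ref{thm:xxx}, since otherwise the $(n{+}1)$-st Darboux transformation would not be defined). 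The two $\mu_n$-contributions then cancel, leaving
\begin{equation}
 (z_{n+1}^2-w^2)\,\cA_{n+1}(w) = -w\,\cB_n(w) + z_{n+1}\,\frac{\cB_n(z_{n+1})}{\cA_n(z_{n+1})}\,\cA_n(w).
\end{equation}

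The next step is to recognize the combination $\cB_n(z_{n+1})\cA_n(w)+\cA_n(z_{n+1})\cB_n(w)$ as, up to an explicit factor, the evaluator $K^n(z_{n+1},w)$. Specializing \eqref{eq:Kn} to $z=z_{n+1}$ and using that $z_{n+1}$ is purely imaginary (so $\overline{z_{n+1}}=-z_{n+1}$) while $\cA_n$ is even and real on the real line and $\cB_n$ is odd and real on the real line (so $\overline{\cA_n(z_{n+1})}=\cA_n(z_{n+1})$ and $\overline{\cB_n(z_{n+1})}=-\cB_n(z_{n+1})$), one finds $K^n(z_{n+1},w) = 2\,\big(\cB_n(z_{n+1})\cA_n(w)+\cA_n(z_{n+1})\cB_n(w)\big)/(z_{n+1}+w)$. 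Rewriting the right-hand side of the displayed identity as $-(z_{n+1}+w)\cB_n(w) + \frac{z_{n+1}}{\cA_n(z_{n+1})}\big(\cB_n(z_{n+1})\cA_n(w)+\cA_n(z_{n+1})\cB_n(w)\big)$ and substituting the kernel expression, the factor $z_{n+1}+w$ is common to every term; since $z_{n+1}^2-w^2=-(w-z_{n+1})(z_{n+1}+w)$, dividing by $-(z_{n+1}+w)$ yields the first asserted identity $(w-z_{n+1})\cA_{n+1}(w) = \cB_n(w) - \frac{z_{n+1}}{2\cA_n(z_{n+1})}K^n(z_{n+1},w)$.

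The second identity is obtained by the mirror-image computation: start from \eqref{eq:dnb}, multiply by $z_{n+1}^2-w^2$, use \eqref{eq:munb} to replace $\frac{d}{dx}\cB_n(w)$ by $-\mu_n\cB_n(w)+w\cA_n(w)$ and to evaluate $\frac{d}{dx}\cB_n(z_{n+1})/\cB_n(z_{n+1}) = -\mu_n + z_{n+1}\cA_n(z_{n+1})/\cB_n(z_{n+1})$ (now using $\cB_n(z_{n+1})\neq 0$). Again the $\mu_n$-terms cancel, the same symmetric kernel combination $\cA_n(z_{n+1})\cB_n(w)+\cB_n(z_{n+1})\cA_n(w)$ appears, and dividing by $-(z_{n+1}+w)$ gives $(w-z_{n+1})\cB_{n+1}(w) = -\cA_n(w) + \frac{z_{n+1}}{2\cB_n(z_{n+1})}K^n(z_{n+1},w)$. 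There is no genuine obstacle here; it is pure bookkeeping. The only points requiring care are the non-vanishing of $\cA_n(z_{n+1})$ and $\cB_n(z_{n+1})$ (inherited from the linear independence of the evaluators) and the parity-plus-reality reduction of \eqref{eq:Kn} at the imaginary point $z_{n+1}$, which is precisely what makes the factor $z_{n+1}+w$ cancel cleanly on both sides.
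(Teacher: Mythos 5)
Your proposal is correct and follows essentially the same route as the paper: multiply \eqref{eq:dna} (resp.\ \eqref{eq:dnb}) by $z_{n+1}^2-w^2$, use \eqref{eq:muna} (resp.\ \eqref{eq:munb}) both for the general $w$ and at $w=z_{n+1}$ so the $\mu_n$ terms cancel, then identify the symmetric combination $\cB_n(z_{n+1})\cA_n(w)+\cA_n(z_{n+1})\cB_n(w)$ with $\tfrac12(z_{n+1}+w)K^n(z_{n+1},w)$ via the parity-and-reality reduction of \eqref{eq:Kn} at the imaginary point $z_{n+1}$, and divide out $-(z_{n+1}+w)$. The only addition over the paper's text is your explicit remark on the non-vanishing of $\cA_n(z_{n+1})$ and $\cB_n(z_{n+1})$, which is consistent with the standing hypotheses.
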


From formula \eqref{eq:K} in Theorem \ref{thm:gram} we know that $\prod_{1\leq
  i \leq n}(w-z_i)\cdot K^n(z_{n+1},w)$ is a linear combination of the initial evaluators
$Z_i(w)$ ($=Z_{z_i}(w)$), $1\leq i \leq n+1$. Hence by induction we obtain the following:
\begin{theorem}\label{thm:EtoEn}
  Let $H = H_x$, $\cA_n$, $\cB_n$, for $n\geq 1$  be as in Theorem
\ref{thm:xxx}. Let $\cE_n = \cA_n - i \cB_n$ and $\cF_n = \cE_n^* = \cA_n + i
\cB_n$. The function $(-i)^n \prod_{1\leq i\leq n}(w-z_i)\cdot \cE_n(w)$ differs from the initial $E = A - i
  B$ function by a finite linear combination of the initial  evaluators
$Z_i(w)$, $1\leq i \leq n$. Also the function $i^n \prod_{1\leq i\leq
  n}(w-z_i)\cdot \cF_n(w)$ ($\cF_n = \cE_n^*$) differs from the initial $F = A + i
  B$ function by a finite linear combination of the initial  evaluators
$Z_i(w)$, $1\leq i \leq n$.
\end{theorem}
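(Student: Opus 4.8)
The plan is to run an induction on $n$, driven by the Darboux recursion of Theorem~\ref{thm:ntonplus1} and using formula~\eqref{eq:K} of Theorem~\ref{thm:gram} for the one structural input needed at each step. First I would combine the two identities of Theorem~\ref{thm:ntonplus1}: forming $\cE_{n+1} = \cA_{n+1} - i\cB_{n+1}$ and $\cF_{n+1} = \cA_{n+1} + i\cB_{n+1}$, and using that $\cA_n$ is even and $\cB_n$ odd so that $\cB_n + i\cA_n = i\cE_n$ and $\cB_n - i\cA_n = -i\cF_n$, one gets
\begin{align*}
  (w - z_{n+1})\,\cE_{n+1}(w) &= i\,\cE_n(w) - \tfrac{z_{n+1}}{2}\Bigl(\tfrac{1}{\cA_n(z_{n+1})} + \tfrac{i}{\cB_n(z_{n+1})}\Bigr)K^n(z_{n+1},w),\\
  (w - z_{n+1})\,\cF_{n+1}(w) &= -i\,\cF_n(w) - \tfrac{z_{n+1}}{2}\Bigl(\tfrac{1}{\cA_n(z_{n+1})} - \tfrac{i}{\cB_n(z_{n+1})}\Bigr)K^n(z_{n+1},w).
\end{align*}
Multiplying the first line through by $(-i)^{n+1}\prod_{i=1}^{n}(w-z_i)$ and the second by $i^{n+1}\prod_{i=1}^{n}(w-z_i)$, the left-hand sides become $(-i)^{n+1}\prod_{i=1}^{n+1}(w-z_i)\cE_{n+1}(w)$ and $i^{n+1}\prod_{i=1}^{n+1}(w-z_i)\cF_{n+1}(w)$, while on the right the scalar $i$ (resp. $-i$) is precisely absorbed in the passage $(-i)^n\to(-i)^{n+1}$ (resp. $i^n\to i^{n+1}$), giving
\begin{align*}
  (-i)^{n+1}\prod_{i=1}^{n+1}(w-z_i)\,\cE_{n+1}(w) &= (-i)^{n}\prod_{i=1}^{n}(w-z_i)\,\cE_n(w) + c_n\prod_{i=1}^{n}(w-z_i)\,K^n(z_{n+1},w),\\
  i^{n+1}\prod_{i=1}^{n+1}(w-z_i)\,\cF_{n+1}(w) &= i^{n}\prod_{i=1}^{n}(w-z_i)\,\cF_n(w) + c_n'\prod_{i=1}^{n}(w-z_i)\,K^n(z_{n+1},w),
\end{align*}
with $c_n, c_n'$ scalars depending only on $x$ and $z_{n+1}$.

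The second ingredient I would record is that $\prod_{i=1}^{n}(w-z_i)\,K^n(z_{n+1},w)$ is a finite linear combination $\sum_{j=1}^{n+1} d_j Z_j(w)$ with $w$-independent coefficients $d_j$. This is immediate from \eqref{eq:K}: expanding the $(n+1)\times(n+1)$ determinant there along its last row, whose entries are $(Z_w,Z_j) = Z_j(w)$ for $1\leq j\leq n$ and $(Z_w,Z_{z_{n+1}}) = Z_{n+1}(w)$, while the complementary minors depend only on $z_{n+1}$ and the fixed data, exhibits $K^n(z_{n+1},w) = \cK_{z_{n+1}}(w)$ as $\gamma(w)$ times such a combination; then $\gamma(w)^{-1} = \prod_{i=1}^{n}(w-z_i)$ clears the factor $\gamma(w)$. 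This is exactly the remark made just before the statement.

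Finally I would assemble the induction. For the base I take $n=0$, with the conventions $\cA_0 = A$, $\cB_0 = B$, $\cE_0 = E$, $\cF_0 = F$, $K^0 = Z$: then the empty product and empty linear combination make both assertions tautologies, and a short computation confirms the $n=1$ case directly --- equivalently, that the recursion of Theorem~\ref{thm:ntonplus1} remains valid with these conventions --- so the step $n=0\to1$ is covered. Assuming at level $n$ that $(-i)^n\prod_{i=1}^n(w-z_i)\cE_n(w)-E(w)$ and $i^n\prod_{i=1}^n(w-z_i)\cF_n(w)-F(w)$ both lie in the span of $Z_1(w),\dots,Z_n(w)$, the two displayed recursions together with the second ingredient put the corresponding differences at level $n+1$ in the span of $Z_1(w),\dots,Z_{n+1}(w)$, closing the induction.

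I do not expect a genuine obstacle here: the argument is essentially the telescoping already set up by Theorems~\ref{thm:ntonplus1} and~\ref{thm:gram}. The one point that calls for care is the bookkeeping of the prefactors $(-i)^n$ and $i^n$ against the signs and the factor $i$ generated at each Darboux step, i.e. checking that the multiplier produced by each recursion is exactly the one absorbed when incrementing the power of $\mp i$ --- which is precisely what the passage from the first pair of displays to the second isolates, and which is also what fixes, a posteriori, why the normalizing powers in the statement are $(-i)^n$ for $\cE_n$ and $i^n$ for $\cF_n = \cE_n^*$.
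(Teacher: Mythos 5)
Your proposal is correct and follows exactly the paper's route: the paper's own (very terse) argument is precisely the induction you carry out, combining the recursion of Theorem~\ref{thm:ntonplus1} with the observation, drawn from formula~\eqref{eq:K}, that $\prod_{1\leq i\leq n}(w-z_i)\,K^n(z_{n+1},w)$ is a linear combination of $Z_1(w),\dots,Z_{n+1}(w)$. Your explicit bookkeeping of the $(-i)^n$ and $i^n$ prefactors against the factors $\pm i$ produced at each Darboux step is a useful spelling-out of details the paper leaves implicit.
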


\begin{remark} Let us note that this characterizes uniquely the $\cE_n$ (and
$\cF_n$) provided by theorem \ref{thm:wronsk}, as the unknown linear
combinations of (linearly independent) evaluators will be constrained by
their values at the $z_i$'s.   This theorem for the transition from $H$ to
$H(\sigma)$ holds with much greater
generality than achieved here (see the companion article \cite{trivial}): it suffices for $H$ to verify the axioms of
\cite{Bra}.   Thus, reverting the steps we could have
started from the results proven in \cite{trivial} and, under the additional
hypotheses made here (existence of a parameter $x$ and of a differential system
of Krein type, imaginary trivial zeros,
\dots), obtain the Darboux transformations (\eqref{eq:dna} and
\eqref{eq:dnb} in Theorem \ref{thm:xxx}) and later the Wronskian formulas
(Theorem \ref{thm:wronsk}) as corollaries. 
\end{remark}

\section{Non-linear equations for
  Paley-Wiener spaces with trivial zeros}

On the basis of Theorem \ref{thm:EtoEn} it is convenient to work with the
``incomplete'' forms of the various objects encountered. As the main results
of this chapter are for the classical Paley-Wiener spaces $PW_x$, we will from
the start assume $H = PW_x$. We
consider its  modification $H(\sigma)$ by finitely many ``trivial'' distinct zeros
$\sigma = (z_1, \dots,z_n)$ (the associated evaluators in $H$ are always 
linearly independent). Let $\gamma(w) = \prod_{1\leq j\leq n}
\frac1{w-z_j}$ be the corresponding gamma factor. We define the incomplete
version $K^\sigma(z,w)$ of the reproducing kernel $\cK(z,w)$ in $H(\sigma)$ via the relation
\begin{equation}
  \cK(z,w) = \cK_z(w) = (\cK_w, \cK_z)  =  \gamma(w)\overline{\gamma(z)}
  K^\sigma(z,w)
\end{equation}
Theorem \ref{thm:gram} is the
statement that  $K^\sigma(z,w)$ 
is the unique entire function of $w$ which vanishes at $z_1$, \dots,
$z_n$ and differs additively from
the initial evaluator $Z(z,w)$ by  a finite linear combination of the
initial evaluators $Z(z_1,w)$, \dots,
$Z(z_n,w)$.

Let us now consider the functions $\cE_\sigma$ and $\cF_\sigma$ characterized
as in Theorem \ref{thm:EtoEn}. We
consider their incomplete versions, up to a factor $i^n$:
\begin{equation}
\label{eq:Eincomplete} 
\cE_\sigma(w) = i^n \gamma(w) E_\sigma(w) \qquad 
\cF_\sigma(w) = i^n \gamma(w) F_\sigma(w)
\end{equation}
Of course, there does not hold (for $n\geq1$) $F_\sigma = E_\sigma^*$ (this last function has its trivial zeros not at the $z_i$'s but at the
$\overline{z_i}$'s). The formula for the incomplete reproducing kernel is
\begin{equation}\label{eq:Kincomplete}
K^\sigma(z,w) = \frac{\overline{E_\sigma(z)}E_\sigma(w) - \overline{F_\sigma(z)}F_\sigma(w)}{i(\overline z-w)}  
\end{equation} The rationale for the $i^n$  in \eqref{eq:Eincomplete} is
twofold: first Theorem \ref{thm:EtoEn}, second the fact that if the $z_i$'s
are imaginary the function
$\cA_\sigma$ and $i\cB_\sigma$ obtained in Theorem \ref{thm:3}  are real on
$i\RR$, hence $\cE_\sigma$ and $\cF_\sigma$ are real on $i\RR$, hence
$E_\sigma(it)$ and $F_\sigma(it)$ as defined by  \eqref{eq:Eincomplete}  are
real for $t$ real. The differential
system with respect to $x$ for $E_\sigma$ and
$F_\sigma$  (as
for their complete versions $\cE_\sigma$, $\cF_\sigma$) is:
  \begin{subequations}
  \begin{align}\label{eq:diffEFa}
    \frac{d}{dx} E_\sigma(it) &= t E_\sigma(it) + \mu_\sigma(x) F_\sigma(it)\\
\label{eq:diffEFb}  \frac{d}{dx} F_\sigma(it) &= -t F_\sigma(it) + \mu_\sigma(x) E_\sigma(it)
    \end{align}
\end{subequations}
We introduce the coefficients $c_1$, \dots, $c_n$, $d_1$, \dots, $d_n$
which are the functions of $x$ and of the imaginary points $z_1 = -i \kappa_1$, \dots, $z_n =
- i \kappa_n$ such that, according to Theorem \ref{thm:EtoEn}, the following holds:
\begin{subequations}
  \begin{align}\label{eq:lesc}
  E_\sigma(it) &= e^{xt} + \sum_{1\leq j \leq n} c_j \frac{2\sh((t - 
    \kappa_j)x)}{t - \kappa_j}\\ \label{eq:lesd}
 F_\sigma(it) &= (-1)^n e^{-xt} + \sum_{1\leq j \leq n} d_j \frac{2\sh((t -
   \kappa_j)x)}{t - \kappa_j}
  \end{align}
\end{subequations}
The identity following from $\cF_\sigma = \cE_\sigma^*$ is (we
use that $E_\sigma$ and $F_\sigma$ are real valued on
$i\RR$):
\begin{equation}\label{eq:EvsF}
  \prod_j (t-\kappa_j) F_\sigma(it) = (-1)^n \prod_j (t+\kappa_j) E_\sigma(-it)
\end{equation}
If one is interested in explicit formulas for the $c_j$'s and $d_j$'s, the
initial recipe is to put $t = -\kappa_1$, \dots, $t= -\kappa_n$ in \eqref{eq:lesc}
(resp. \eqref{eq:lesd}) and to use the trivial zeros $E_\sigma(-i\kappa_j) =
0$ (resp. $F_\sigma(-i\kappa_j) =
0$). Cramer's formulas thus lead to determinantal representations for the
$c_j$'s and $d_j$'s (which are seen to be real valued).

\begin{remark} We  pause here to explain how to
remove the restrictions $\kappa_i + \kappa_j \neq 0$. These constraints go
back to Theorem \ref{thm:3}. They were necessary to avoid vanishing of the
denominators $U_n$ and $V_n$, in the formulas for $\cA_n$, $\cB_n$. But
\eqref{eq:lesc} and \eqref{eq:lesd} define $E_\sigma$ and $F_\sigma$, and the
validity of \begin{equation} K^\sigma(z,w) =
\frac{\overline{E_\sigma(z)}E_\sigma(w) -
\overline{F_\sigma(z)}F_\sigma(w)}{i(\overline z-w)}  
\end{equation}   follows by continuity (for real $\kappa_i$'s), as there is no
singularity arising in the formulas for the coefficients $c_1$, \dots,
$c_n$, $d_1$, \dots, $d_n$. The same remark applies to the mu-function
$\mu_\sigma$ which will be expressed below in terms of these
coefficients. Hence  by continuity we again have a mu-function and a
differential system \eqref{eq:diffEFa}, \eqref{eq:diffEFb}  even when
$\kappa_i + \kappa_j = 0$ for some $(i,j)$. The conditions $\kappa_i +
\kappa_j \neq 0$  were inforced
only in order to facilitate the writing of explicit formulas of Wronskian type
for the $\cA$'s and $\cB$'s.
\end{remark}

There is a
plethora of various algebraic and differential identities involving the $c_j$'s
and $d_j$'s. We propose a basic selection, sufficient for our goal in
this chapter. 
From \eqref{eq:lesc}, the value of $(\frac{d}{dx} - t)E_\sigma(it)$ is
\begin{equation} \sum_{1\leq j \leq n} (c_j'
- \kappa_j c_j)\frac{2\sh((t -  \kappa_j)x)}{t - \kappa_j} + \sum_{1\leq j
\leq n} c_j (2\ch((t -\kappa_j)x) - 2\sh((t - \kappa_j)x))
\end{equation}
Comparison with \eqref{eq:diffEFa} gives:
\begin{equation}\label{eq:muformula}
  \mu_\sigma(x) = (-1)^n \sum_{1\leq j
\leq n} 2 c_j e^{\kappa_j x}
\end{equation}
\begin{equation}\label{eq:dcj}
 \text{and}\qquad 1\leq j\leq n\implies\quad \frac{d}{dx} c_j - \kappa_j c_j = \mu_\sigma d_j
\end{equation}
Similarly, from \eqref{eq:lesd}, the value of $(\frac{d}{dx} + t)F_\sigma(it)$
is
\begin{equation} \sum_{1\leq j \leq n} (d_j'
+ \kappa_j d_j)\frac{2\sh((t -  \kappa_j)x)}{t - \kappa_j} + \sum_{1\leq j
\leq n} d_j (2\ch((t -\kappa_j)x) + 2\sh((t - \kappa_j)x))
\end{equation}
Thus:
\begin{equation}
  \mu_\sigma(x) = \sum_{1\leq j \leq n} 2 d_j e^{-\kappa_j x}
\end{equation}
\begin{equation}\label{eq:ddj}
  \text{and}\qquad  1\leq j\leq n\implies\quad \frac{d}{dx} d_j + \kappa_j d_j =  \mu_\sigma c_j
\end{equation}
We take note of the asymptotics:
\begin{subequations}
  \begin{align}
\label{eq:asym1}
  E_\sigma(it) &=_{t\to+\infty} e^{xt} \left( 1 -
    \frac{\alpha_\sigma(x)}{2t} + O(t^{-2})\right)&\qquad \alpha_\sigma(x) &= - 2\sum_{1\leq j \leq
      n} c_je^{-\kappa_j x}\\
\label{eq:asym4}
 F_\sigma(it) &=_{t\to-\infty}  (-1)^n e^{-xt}\left(1 + \frac{\delta_\sigma(x)}{2t} +
   O(t^{-2})\right)&\qquad \delta_\sigma(x) &=  -(-1)^n 2\sum_{1\leq j \leq
      n} d_je^{\kappa_j x}
  \end{align}
\end{subequations}
Using \eqref{eq:EvsF} we obtain $\delta_\sigma(x) = \alpha_\sigma(x) + 4
\sum_{1\leq j\leq n} \kappa_j$. Further,
\begin{equation}
  \frac{d}{dx} \alpha_\sigma(x) = -2 \sum_{1\leq j \leq
      n} (c_j' - \kappa_j c_j) e^{-\kappa_j x} = -2\mu_\sigma\sum_{1\leq j \leq
      n} d_j  e^{-\kappa_j x} = - \mu_\sigma^2
\end{equation}
Using either the differential equations or the identities already known
provides the two further asymptotics:
\begin{subequations}
  \begin{align}\label{eq:asym2}
 F_\sigma(it) &=_{t\to+\infty} e^{xt} \left(\frac{\mu_\sigma(x)}{2t} +
   O(t^{-2})\right)\\
\label{eq:asym3}
  E_\sigma(it) &=_{t\to-\infty} (-1)^n e^{-xt} \left( 
    \frac{-\mu_\sigma(x)}{2t} + O(t^{-2})\right)
  \end{align}
\end{subequations}

We definitely switch to viewing functions depending on $x$ as functions of
the variable $a = e^{-x}$. For example we write $\mu_\sigma(a)$, rather than
$\mu_\sigma(-\log(a))$. We have $a\frac{d}{da} = - \frac{d}{dx}$.  We also fix
once and for all an integer $n\geq1$, and will study the spaces $PW_x(\nu,n)$
associated to a sequence of trivial zeros $z_1$, \dots, $z_n$, in arithmetic
progression:
\begin{equation}
  \kappa_1 =\frac{\nu+1}2, \kappa_2 = \frac{\nu+1}2 + 1, \dots, \kappa_n = \frac{\nu+1}2 +
  n-1\qquad z_j = - i \kappa_j
\end{equation}
The transition from $n$ to $n+1$ is described by Theorem \ref{thm:ntonplus1}. Here
$n$ is fixed, and we shall study the relation between $\nu$ and $\nu+1$. 

We will use the notations $E_\nu$, $E_{\nu+1}$, $F_\nu$, $F_{\nu+1}$,
$\mu_\nu$, $\mu_{\nu+1}$, and $K^\nu$, $K^{\nu+1}$ for the incomplete
reproducing kernel. Neither the dependency on $a$ nor on $n$ is   explicitely recalled in the
notation. Also we shall write $c_1^\nu$, \dots, $c^\nu_n$ and $d^\nu_1$, \dots, $d^\nu_n$,
respectively 
$c^{\nu+1}_1$, \dots, $c^{\nu+1}_n$,  and $d^{\nu+1}_1$, \dots, $d^{\nu+1}_n$, for the coefficients appearing
in equations \eqref{eq:lesc} and \eqref{eq:lesd} for $\nu$ and $\nu+1$. These coefficients are functions of $a$ (depending
on $n$). We rewrite \eqref{eq:lesc} and \eqref{eq:lesd} as 
\begin{subequations}
  \begin{align}\label{eq:lesc2}
  E_\nu(it) &= e^{xt} + 
  \int_{-x}^x e^{-ty}e^{\frac{\nu+1}2 y}\sum_{1\leq j \leq n} c^{\nu}_j\, e^{(j-1)
    y}\,dy\\ 
\label{eq:lesd2}
  F_\nu(it) &= (-1)^ne^{-xt} +  
  \int_{-x}^x e^{-ty}e^{\frac{\nu+1}2 y}\sum_{1\leq j \leq n} d^\nu_j\, e^{(j-1)
    y}\,dy
  \end{align}
\end{subequations}
According to \eqref{eq:lesc2}:
\begin{equation}
  a^{\frac12} E_\nu(i(t+\frac12)) = e^{xt} +  
  \int_{-x}^x e^{-ty}e^{\frac{\nu+2}2 y} \sum_{0\leq j \leq n-1}
  a^{\frac12} c_{j+1}^\nu e^{(j-1)
    y} \,dy
\end{equation}
So the function $w\mapsto a^{\frac12} E_\nu(w+i\frac12) - E_{\nu+1}(w)$ is a
finite linear combination of the $n+1$ initial Paley-Wiener evaluators
$Z(-i\frac{\nu}2, w)$, $Z(-i\frac{\nu+2}2, w)$, \dots, $Z(-i\frac{\nu+2}2 -
i (n-1), w)$. Moreover it has trivial zeros at the trivial zeros of $E_{\nu+1}$.
By Theorem \ref{thm:gram} this identifies $a^{\frac12} E_\nu(w+i\frac12) -
E_{\nu+1}(w)$ with a constant multiple of $K^{\nu+1}(-i\frac{\nu}2, w)$, the factor
being precisely $a^{\frac12} c_{1}^\nu$. We prove in this manner the first of
the following identities:
\begin{proposition}\label{eq:recurrence}
  There holds
  \begin{subequations}
  \begin{align}
\label{eq:rec1}     a^{\frac12} E_\nu(w+i\frac12) &=  E_{\nu+1}(w) + a^{\frac12} c_{1}^\nu
        K^{\nu+1}(-i\frac{\nu}2,w)\\
\label{eq:rec2}         a^{-\frac12} F_\nu(w+i\frac12) &=  F_{\nu+1}(w) + a^{-\frac12} d_{1}^\nu
     K^{\nu+1}(-i\frac{\nu}2,w)\\
\label{eq:rec3}         a^{-\frac12} E_{\nu+1}(w-i\frac12) &=  E_{\nu}(w) + a^{-\frac12} c_{n}^{\nu+1}
     K^{\nu}(-i\frac{\nu+1}2 - in,w)\\
\label{eq:rec4}         a^{\frac12} F_{\nu+1}(w-i\frac12) &=  F_{\nu}(w) +
a^{\frac12}  d_{n}^{\nu+1}
     K^{\nu}(-i\frac{\nu+1}2 - in,w)
    \end{align}
\end{subequations}
\end{proposition}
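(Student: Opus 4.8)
The plan is to establish each of the four identities by the same device already used in the text to prove \eqref{eq:rec1}: exhibit the left-hand side as an entire function of $w$ that (i) differs from the appropriate shifted/scaled incomplete $E$ or $F$ function only by a finite linear combination of suitable Paley-Wiener evaluators, and (ii) has the right trivial zeros, so that Theorem \ref{thm:gram} (in the form of the uniqueness statement recalled just before Proposition \ref{eq:recurrence}) forces the identification, with the scalar prefactor pinned down by comparing one explicit coefficient. First I would record, for reference, the integral representations \eqref{eq:lesc2} and \eqref{eq:lesd2} for both $\nu$ and $\nu+1$, together with the lists of trivial zeros: for $\nu$ the $\kappa_j$ run over $\tfrac{\nu+1}2,\dots,\tfrac{\nu+1}2+n-1$, i.e.\ $z_j=-i\kappa_j$; for $\nu+1$ they run over $\tfrac{\nu+2}2,\dots,\tfrac{\nu+2}2+n-1$. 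Note $\tfrac{\nu+2}2=\tfrac{\nu}2+1$, so the two arithmetic progressions overlap in all but one node: the $\nu$-progression contains the extra node $-i\tfrac{\nu}2$ (not in the $\nu+1$-list) and the $\nu+1$-progression contains the extra node $-i(\tfrac{\nu+1}2+n)=-i(\tfrac{\nu+2}2+n-1)$ (not in the $\nu$-list). This asymmetry is exactly what drives the four cases: \eqref{eq:rec1},\eqref{eq:rec2} go ``upward'' in $\nu$ and bring in the extra node $-i\tfrac\nu2$ of the $\nu$-space, while \eqref{eq:rec3},\eqref{eq:rec4} go ``downward'' and bring in the extra node $-i(\tfrac{\nu+1}2+n)$ of the $(\nu+1)$-space.

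For \eqref{eq:rec1} I would simply repeat the argument given in the text, this time spelled out: from \eqref{eq:lesc2} one computes
\begin{equation}
  a^{\frac12}E_\nu(w+i\tfrac12) = e^{xt} + \int_{-x}^x e^{-ty}\,e^{\frac{\nu+2}2 y}\Big(\sum_{0\le j\le n-1} a^{\frac12}c^\nu_{j+1}e^{(j-1)y} + a^{\frac12}c^\nu_1 e^{-y}\Big)\,dy,
\end{equation}
wait --- more carefully, the shift $t\mapsto t+\tfrac12$ multiplies the integrand's exponential factor by $e^{\frac12 y}$, turning $e^{\frac{\nu+1}2 y}$ into $e^{\frac{\nu+2}2 y}$ and $e^{(j-1)y}$ into $e^{(j-1+\frac12)y}$; collecting exponents shows that the sum over the $n$ old coefficients, after the shift, lands precisely on the exponent range $e^{\frac{\nu+2}2 y}e^{(j-1)y}$ for $j=0,\dots,n-1$ --- that is, on the exponents feeding $E_{\nu+1}$ except that it also feeds the extra exponent $e^{\frac{\nu+2}2 y}e^{-y}=e^{\frac\nu2 y}$, which corresponds to the extra trivial node $-i\tfrac\nu2$. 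Hence $a^{\frac12}E_\nu(w+i\tfrac12)-E_{\nu+1}(w)$ is a finite linear combination of the evaluators $Z(-i\tfrac\nu2,w),Z(-i\tfrac{\nu+2}2,w),\dots,Z(-i(\tfrac{\nu+2}2+n-1),w)$ (using $Z_z(w)=2\sin((\bar z-w)x)/(\bar z-w)$ and $z$ imaginary, so $\bar z=-z$, matching the $\tfrac{2\sh((t-\kappa)x)}{t-\kappa}$ kernels), it vanishes at all trivial zeros of $E_{\nu+1}$, and therefore by the uniqueness in Theorem \ref{thm:gram} it is a scalar multiple of $K^{\nu+1}(-i\tfrac\nu2,w)$ --- the scalar being read off as $a^{\frac12}c^\nu_1$ from the coefficient of the $e^{\frac\nu2 y}$ exponential. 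The remaining identities are obtained by entirely parallel bookkeeping: \eqref{eq:rec2} is the same computation starting from \eqref{eq:lesd2} (the factor is $a^{-\frac12}$ because $F_\nu$ carries $(-1)^ne^{-xt}$ and the shift acts oppositely, and the scalar is $a^{-\frac12}d^\nu_1$); \eqref{eq:rec3} and \eqref{eq:rec4} use the opposite shift $w\mapsto w-i\tfrac12$ applied to the $(\nu+1)$-representations, where now the leftover exponent is $e^{(\frac{\nu+2}2+n-1)y}e^{y}=e^{(\frac{\nu+1}2+n)y}$, corresponding to the node $-i(\tfrac{\nu+1}2+n)$, giving scalars $a^{-\frac12}c^{\nu+1}_n$ and $a^{\frac12}d^{\nu+1}_n$ respectively. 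One should double-check the sign/parity factors: $F_\sigma$ carries the prefactor $(-1)^n$ in \eqref{eq:lesd}, but since $n$ is the \emph{same} for both $\nu$ and $\nu+1$ spaces this prefactor matches on both sides and causes no trouble; the real subtlety is only to verify that $a^{-\frac12}F_\nu(w+i\tfrac12)-F_{\nu+1}(w)$ still vanishes at the $(\nu+1)$-trivial zeros, which it does because the integral-kernel argument produces exactly those $\tfrac{2\sh((t-\kappa_j)x)}{t-\kappa_j}$ building blocks with $\kappa_j$ running over the $(\nu+1)$-progression plus the single extra node.

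The main obstacle --- such as it is --- is purely organizational: keeping straight, across all four cases, (a) which of the two arithmetic progressions is the ``larger'' one, (b) the direction of the half-integer shift and the resulting power of $a=e^{-x}$, (c) the identification of the single leftover exponential with the correct extra evaluator node, and (d) reading off the correct coefficient ($c^\nu_1$, $d^\nu_1$, $c^{\nu+1}_n$, or $d^{\nu+1}_n$) as the scaling constant. There is no analytic difficulty: continuity in the $\kappa_j$ (invoked in the preceding remark) already guarantees all objects are well-defined even when $\kappa_i+\kappa_j=0$, and the uniqueness clause of Theorem \ref{thm:gram} does all the heavy lifting. I would therefore present \eqref{eq:rec1} in full as the model computation (as the text already begins to do) and then dispatch \eqref{eq:rec2}--\eqref{eq:rec4} by indicating the three changes --- replace $c$ by $d$ / reverse the shift / relocate the extra node --- leaving the reader to transcribe the bookkeeping.
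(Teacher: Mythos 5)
Your proof takes essentially the same route as the paper: the paper establishes \eqref{eq:rec1} in the text immediately preceding the proposition by exactly this exponent bookkeeping plus the uniqueness clause of Proposition \ref{thm:gram}, and its stated proof of the proposition is simply that the remaining three identities follow by the same method. (One small slip: in your last computation the half-shift contributes $e^{y/2}$, not $e^{y}$, so the leftover exponent is $e^{(\frac{\nu+2}{2}+n-1)y}e^{y/2}=e^{(\frac{\nu+1}{2}+n)y}$ --- the node you then name is nevertheless the correct one.)
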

\begin{proof}
  The additional three identities are proved by the same method as the
  first. We will not make direct use of the ensuing relation between the two
  kinds of evaluators.
\end{proof}

Let us recall that:
\begin{subequations}
  \begin{align}\label{eq:ggg}
  K^{\nu+1}(-i\frac{\nu}2,it) &= \frac{E_{\nu+1}(-i\frac{\nu}2)E_{\nu+1}(it) -
    F_{\nu+1}(-i\frac{\nu}2)F_{\nu+1}(it)}{t - \frac{\nu}2}\\
\label{eq:ggg2}
  K^{\nu}(-i(\frac{\nu+1}2 + n),it) &= \frac{E_{\nu}(-i(\frac{\nu+1}2+n))E_{\nu}(it) -
    F_{\nu}(-i(\frac{\nu+1}2+n))F_{\nu}(it)}{t - \frac{\nu+1}2 - n}
  \end{align}
\end{subequations}
In order to shorten the formulas we adopt the notations:
\begin{subequations}
  \begin{align}
  e_{\nu} &= E_{\nu}(-i\frac{\nu+1}2-in)&  g_{\nu+1}&= E_{\nu+1}(-i\frac\nu2)\\
 f_\nu &= F_{\nu}(-i\frac{\nu+1}2-in)& h_{\nu+1}&= F_{\nu+1}(-i\frac\nu2)
  \end{align}
\end{subequations}
Combining \eqref{eq:rec1} and \eqref{eq:ggg} gives  (with the notation $\sh_x(y) = \frac{\sh(xy)}y$)
\begin{equation}
  \begin{split}
    ({t - \frac{\nu}2})\left(\sum_{0\leq j \leq n-1} a^{\frac12} c_{j+1}^{\nu}
      2\sh_x({t - \frac{\nu}2 - j}) - \sum_{1\leq j \leq n} c_j^{\nu+1}
      2\sh_x({t - \frac{\nu}2 - j})\right)\\ =  a^{\frac12}c_{1}^\nu ({g_{\nu+1} E_{\nu+1}(it) -
      h_{\nu+1} F_{\nu+1}(it)})
  \end{split}
\end{equation}
Using $(t - \frac\nu2)\sh_x(t - \frac\nu2 - j) = \sh(x(t - \frac\nu2 - j)) +j
\sh_x(t - \frac\nu2 - j)$ we can rearrange and obtain identities by termwise
identifications. We record only the identity corresponding to the term with the highest
value of $j$ ($j = n$):
\begin{equation}
  n\, c_n^{\nu+1} = a^{\frac12} c_{1}^\nu \left( h_{\nu+1} d_n^{\nu+1} - g_{\nu+1} c_n^{\nu+1} \right)
\end{equation}
Combining similarly \eqref{eq:rec2} with \eqref{eq:ggg}
we obtain (among other identities!)
\begin{equation}
  n d^{\nu+1}_n = a^{-\frac12} d^\nu_1\left(h_{\nu+1} d^{\nu+1}_n
    - g_{\nu+1}  c^{\nu+1}_n\right)
\end{equation}
Dealing in the same manner with \eqref{eq:rec3}n \eqref{eq:rec4} and
\eqref{eq:ggg2} gives two further
identities of a symmetric type. Summing up, the four obtained relations are:
\begin{proposition}\label{prop:nlcd}
There holds:
\begin{subequations}
  \begin{align}\label{eq:rr1}
  n\, c_n^{\nu+1} &= a^{\frac12} c_{1}^\nu \left( h_{\nu+1} d_n^{\nu+1} - g_{\nu+1} c_n^{\nu+1} \right)
\\
\label{eq:rr2}
  n d^{\nu+1}_n &= a^{-\frac12} d^\nu_1\left(h_{\nu+1} d^{\nu+1}_n
    - g_{\nu+1}  c^{\nu+1}_n\right)
\\
\label{eq:rr3}
 {- n c_1^\nu} &= a^{-\frac12}c^{\nu+1}_n \left({{f_\nu d^\nu_1}
    - e_\nu {c^\nu_1  }}\right)
\\
\label{eq:rr4}
 {-n d^\nu_1} &= a^{\frac12}d^{\nu+1}_n \left({{f_\nu d^\nu_1}
    - e_\nu {c^\nu_1  }}\right)
  \end{align}
\end{subequations}
\end{proposition}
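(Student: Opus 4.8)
The four relations are four instances of one and the same computation, and in fact \eqref{eq:rr1} and \eqref{eq:rr2} have just been extracted above; the plan for all four is as follows. One starts from the appropriate recurrence of Proposition~\ref{eq:recurrence} --- \eqref{eq:rec1} or \eqref{eq:rec2} for \eqref{eq:rr1}, \eqref{eq:rr2}, and \eqref{eq:rec3} or \eqref{eq:rec4} for \eqref{eq:rr3}, \eqref{eq:rr4} --- and substitutes on both sides the series \eqref{eq:lesc}, \eqref{eq:lesd} for the relevant $E$'s and $F$'s together with the expansion \eqref{eq:ggg} (resp.\ \eqref{eq:ggg2}) of the evaluator occurring on the right. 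One then clears the single linear denominator ($t-\tfrac\nu2$, resp.\ $t-\tfrac{\nu+1}2-n$) by means of the elementary rearrangement $(t-c)\,\sh_x(t-c-j)=\sh\!\big(x(t-c-j)\big)+j\,\sh_x(t-c-j)$. After this step each side of the recurrence is an entire function of $t$ written as a finite linear combination of $\ch(xt)$, $\sh(xt)$ and of $n$ consecutive translates $\sh_x(t-c-j)$.

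The point to check is that, for every $x>0$, the family $\{\ch(xt),\,\sh(xt)\}\cup\{\sh_x(t-c-j)\}$ is linearly independent as a family of functions of $t$. This one sees via the Fourier/Laplace transform on $[-x,x]$: up to normalisation $\ch(xt)$ and $\sh(xt)$ are the transforms of the endpoint masses $\tfrac12(\delta_x\pm\delta_{-x})$, while $\sh_x(t-c-j)=\tfrac12\int_{-x}^x e^{(c+j)y}e^{-ty}\,dy$ is the transform of $y\mapsto e^{(c+j)y}\mathbf 1_{(-x,x)}(y)$, and distinct exponentials together with boundary point masses form linearly independent measures, whose transforms are therefore independent. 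Granting this, one identifies coefficients termwise in the functional identity obtained above. That yields a whole packet of scalar identities relating the $c^\nu_j,d^\nu_j,c^{\nu+1}_j,d^{\nu+1}_j$ and the boundary values $e_\nu,f_\nu,g_{\nu+1},h_{\nu+1}$; of these only one is needed in each case, the one coming from the extreme translate --- the highest index $j=n$ in the cases \eqref{eq:rec1}, \eqref{eq:rec2}, and the lowest index $j=0$ in the cases \eqref{eq:rec3}, \eqref{eq:rec4}. At that extreme translate the coefficient collapses on one side to a single multiple of $c^{\nu+1}_n$ (resp.\ $d^{\nu+1}_n$, $c^\nu_1$, $d^\nu_1$), the numerical factor $n$ being produced by the rearrangement, and on the other side to a multiple of $g_{\nu+1}c^{\nu+1}_n-h_{\nu+1}d^{\nu+1}_n$ (resp.\ of $e_\nu c^\nu_1-f_\nu d^\nu_1$); cancelling the common factor~$2$ and tidying the signs gives precisely \eqref{eq:rr1}--\eqref{eq:rr4}.

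There is no genuine analytic difficulty beyond the independence statement; the one thing to be careful about is the bookkeeping. One must verify exactly which of the translates $\sh_x(t-c-j)$ actually survive on each side --- so that at the extreme index the coefficient really is a single monomial rather than a sum --- and one must carry through the substitutions the prefactors $a^{\pm1/2}$ coming from the half-integer shifts in Proposition~\ref{eq:recurrence} and the sign $(-1)^n$ built into the normalisation of $F_\sigma$ in \eqref{eq:lesd}. Carrying this out for \eqref{eq:rec3} and \eqref{eq:rec4}, exactly as was done above for \eqref{eq:rec1} and \eqref{eq:rec2}, produces the remaining two relations \eqref{eq:rr3} and \eqref{eq:rr4}.
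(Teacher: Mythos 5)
Your proposal is correct and follows essentially the same route as the paper: substitute the expansions \eqref{eq:lesc}, \eqref{eq:lesd} and \eqref{eq:ggg}/\eqref{eq:ggg2} into the recurrences of Proposition~\ref{eq:recurrence}, clear the linear denominator via $(t-c)\,\sh_x(t-c-j)=\sh(x(t-c-j))+j\,\sh_x(t-c-j)$, and identify coefficients at the extreme translate (the paper carries this out explicitly for \eqref{eq:rr1}, \eqref{eq:rr2} and leaves \eqref{eq:rr3}, \eqref{eq:rr4} to the symmetric computation, exactly as you do). Your explicit justification of the termwise identification by linear independence of the exponentials and endpoint masses is a detail the paper leaves implicit, but it does not change the method.
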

We will need the following:
\begin{proposition}
  None of  the eight quantities $e_\nu$, $f_\nu$, $g_{\nu+1}$, $h_{\nu+1}$,
  $c_1^\nu$, $d^\nu_1$, $c_{n+1}^{\nu+1}$, and $d_{n+1}^{\nu+1}$ can vanish.
\end{proposition}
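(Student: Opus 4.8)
The plan is to argue by contradiction, combining three ingredients: strict positivity of the reproducing kernels (and of the Gram/Wronskian determinants behind Theorems~\ref{thm:3} and~\ref{thm:wronsk}), the linear coupling of the eight quantities furnished by Proposition~\ref{prop:nlcd} together with the recurrences of Proposition~\ref{eq:recurrence}, and the asymptotic normalizations \eqref{eq:asym1}--\eqref{eq:asym3} together with the integral representations \eqref{eq:lesc}, \eqref{eq:lesd}.

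\emph{Positivity.} For $z$ outside $\sigma$ the evaluators $Z_{z_1},\dots,Z_{z_n},Z_z$ in $PW_x$ are linearly independent, so $Z_z\notin(H^\sigma)^\perp=\operatorname{span}(Z_{z_1},\dots,Z_{z_n})$; since $\cK_z$ is (under the defining isometry) $\overline{\gamma(z)}$ times the orthogonal projection of $Z_z$ onto $H^\sigma$, it is nonzero, whence $K^\sigma(z,z)=\|\cK_z\|^2/|\gamma(z)|^2>0$. At an imaginary $z=-i\kappa$ with $\kappa>0$, $\kappa\notin\{\kappa_1,\dots,\kappa_n\}$, formula \eqref{eq:Kincomplete} and the reality of $E_\sigma,F_\sigma$ on $i\RR$ turn this into $F_\sigma(-i\kappa)^2>E_\sigma(-i\kappa)^2$; taking $\kappa=\kappa_{n+1}:=\tfrac{\nu+1}2+n$ gives $|f_\nu|>|e_\nu|$, in particular $f_\nu\neq0$, and the same at $\kappa=\tfrac\nu2$ settles one of $g_{\nu+1},h_{\nu+1}$ (when $\nu=0$ one uses instead that $\cE_{\nu+1}$ has no real zero --- a real zero $x_0$ would force $\cK_{x_0}\equiv0$, impossible --- so $\cE_{\nu+1}(0)=\cA_{\nu+1}(0)\neq0$). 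In the same spirit, by Theorem~\ref{thm:wronsk} the values $\cA_\sigma(z_{n+1})$, $\cB_\sigma(z_{n+1})$ are nonzero real multiples of the ratios $W(\ch(\kappa_1x),\dots,\ch(\kappa_{n+1}x))/W(\ch(\kappa_1x),\dots,\ch(\kappa_nx))$ and its $\sh$ analogue, and by Proposition~\ref{prop} the product $W(\ch)\,W(\sh)$ is a positive constant times $\det\bigl(\tfrac{\sh((\kappa_i+\kappa_j)x)}{\kappa_i+\kappa_j}\bigr)=\det\int_0^x v(t)\,v(t)^\top dt+\det\int_0^x \bar v(t)\,\bar v(t)^\top dt$ with $v(t)=(e^{\kappa_1t},\dots)$, a sum of positive definite Gram matrices, so all these Wronskians are $>0$ for $x>0$; hence $\cA_\sigma(z_{n+1})\neq0$, $\cB_\sigma(z_{n+1})\neq0$.

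\emph{Coupling, and the contradiction.} Write $P=h_{\nu+1}d_n^{\nu+1}-g_{\nu+1}c_n^{\nu+1}$ and $Q=f_\nu d_1^\nu-e_\nu c_1^\nu$, so that \eqref{eq:rr1}--\eqref{eq:rr4} read $n c_n^{\nu+1}=a^{1/2}c_1^\nu P$, $n d_n^{\nu+1}=a^{-1/2}d_1^\nu P$, $-n c_1^\nu=a^{-1/2}c_n^{\nu+1}Q$, $-n d_1^\nu=a^{1/2}d_n^{\nu+1}Q$. Substituting one into another yields $c_1^\nu(n^2+PQ)=c_n^{\nu+1}(n^2+PQ)=d_1^\nu(n^2+PQ)=d_n^{\nu+1}(n^2+PQ)=0$: either $PQ=-n^2$ (so $P\neq0\neq Q$, and the relations then show $c_1^\nu=0\Leftrightarrow c_n^{\nu+1}=0$, $d_1^\nu=0\Leftrightarrow d_n^{\nu+1}=0$), or all four coefficients vanish at once, in which case, $(c_1^\nu,d_1^\nu)$ and $(c_n^{\nu+1},d_n^{\nu+1})$ solving the linear systems \eqref{eq:dcj}, \eqref{eq:ddj}, they vanish for every $a$ by uniqueness. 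But $c_1^\nu\equiv0$ is impossible: by Cramer and Proposition~\ref{prop} one has $c_1^\nu=\det M_1/\det M$ with $M_{lj}=\tfrac{2\sh((\kappa_l+\kappa_j)x)}{\kappa_l+\kappa_j}$ and $\det M\neq0$ for $x>0$, and the leading exponential term of $\det M_1$ as $x\to+\infty$ is a nonzero Cauchy-determinant multiple of $e^{(2\sum\kappa_k-3\kappa_1)x}$, so $\det M_1\not\equiv0$. This kills the ``all four vanish'' branch. In the remaining branch one uses the recurrence \eqref{eq:rec1}: if $c_1^\nu(a_0)=0$ then $a_0^{1/2}E_\nu(w+i/2)=E_{\nu+1}(w)$ identically in $w$ at $a=a_0$, and expanding this along $w=is$, $s\to+\infty$ via \eqref{eq:asym1} forces the subleading coefficients $\alpha_\nu$ and $\alpha_{\nu+1}$ to obey $\alpha_\nu(a_0)=\alpha_{\nu+1}(a_0)$ and a matching relation one order further; since $\tfrac{d}{dx}\alpha_\sigma=-\mu_\sigma^2$ and the $a\to0$ expansions of $\mu_\nu$, $\mu_{\nu+1}$ (from $\mu_\sigma=(-1)^n\sum 2c_je^{\kappa_jx}$ and the Wronskian-ratio formula for $\mu_n$ in Theorem~\ref{thm:xxx}) differ, one derives the contradiction; the case $c_n^{\nu+1}\equiv0$ (or vanishing at some $a_0$) is dual, through \eqref{eq:rec3}. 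Finally the vanishing of $e_\nu$ (resp. $g_{\nu+1}$, resp. $h_{\nu+1}$) is funneled, by evaluating the appropriate member of \eqref{eq:rec1}--\eqref{eq:rec4} at the relevant imaginary point and invoking the strict kernel positivity above, onto the vanishing of $c_n^{\nu+1}$ (resp. $c_1^\nu$), already excluded.

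I expect the genuine obstacle to lie in this last closing of the loop: ruling out an \emph{isolated} zero of $c_1^\nu$ (or $c_n^{\nu+1}$) at a single value $a_0$ --- where the recurrence collapse only holds at $a_0$ and the two-term asymptotic comparison is not by itself contradictory --- so one must either track enough orders of the $1/t$-expansion, or show directly (perhaps via another total-positivity argument for $\det M_1$) that $c_1^\nu$ is sign-definite; and one must keep the degenerate regimes ($\nu=0$, and $a\to1$ where the $c_j,d_j$ blow up) under control throughout.
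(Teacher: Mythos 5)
Your argument does not close, and you flag the fatal point yourself: nothing in the proposal excludes an \emph{isolated} zero of $c_1^\nu$ (or $d_1^\nu$, $c_n^{\nu+1}$, $d_n^{\nu+1}$) at a single value $a_0$. The algebra of Proposition~\ref{prop:nlcd} only yields the dichotomy ``$PQ=-n^2$ or the coefficient vanishes at $a_0$'', and your ODE-uniqueness argument only rules out \emph{identical} vanishing; the remaining branch is left to a two-term asymptotic comparison that, as you concede, is not by itself contradictory. Since the non-vanishing of $e_\nu$ and $g_{\nu+1}$ is then ``funneled'' (via \eqref{eq:comp2}, \eqref{eq:comp3}) onto precisely these unproved facts, six of the eight assertions remain unestablished. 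Two smaller slips: from Proposition~\ref{prop} and the positive-definiteness of the Gram matrix $\bigl(\int_{-x}^x e^{(\kappa_i+\kappa_j)y}dy\bigr)$ you only get that the \emph{product} of the two Wronskians is positive, not each factor (you would need to add that neither factor vanishes and then fix signs by asymptotics); and $\cA_\sigma(z_{n+1})\neq0$, $\cB_\sigma(z_{n+1})\neq0$ does not give $e_\nu\neq0$, because $\cE_\sigma(it)=\cA_\sigma(it)-i\cB_\sigma(it)$ is a sum of two real numbers which for $t<0$ have opposite signs and may cancel.

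The paper's proof is exactly the ``total-positivity argument for $\det M_1$'' that you name as a possible repair but do not carry out, and it is uniform over all eight quantities. Cramer's rule gives $\det(G)\,c_1^\nu$ as the determinant obtained from $G_{ij}=\int_{-x}^x e^{(\kappa_i+\kappa_j)y}dy$ by replacing the first column with $(-e^{-x\kappa_i})_i$. Because the $\kappa_j$ form an arithmetic progression of step $1$, subtracting $e^{-x}$ times each row from the next annihilates that column except for its top entry, and the complementary minor becomes
\begin{equation*}
\det\Bigl(\int_{-x}^x e^{(\kappa_i+\kappa_j)y}\,(e^y-e^{-x})e^{-y}\,dy\Bigr)_{2\le i,j\le n},
\end{equation*}
the Gram determinant of the independent functions $e^{\kappa_j y}$ for a strictly positive measure on $(-x,x)$, hence strictly positive. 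Thus $c_1^\nu<0$ for \emph{every} $a$, and the analogous row/column manipulations (from the top or the bottom of the progression) determine a definite sign for each of the other seven quantities. If you want to keep your structure, replace the entire coupling-and-asymptotics paragraph by this single computation; the kernel-positivity observations in your first paragraph then become superfluous.
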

\begin{proof}
The proof introduces tacitly a link with techniques of orthogonal polynomial
theory, which leads to some results we will expose elsewhere. Let us write explicitely the system of linear equations for the
$c_\nu^j$'s:
\begin{equation}
  \forall i = 1\dots n\quad \sum_{1\leq j \leq n} c_j^\nu \int_{-x}^x e^{(\kappa_i
    + \kappa_j)y}\,dy  = -e^{-x\kappa_i}
\end{equation}
where we recall that $\kappa_j = \frac{\nu+1}2 + j -1$. Let $G$ be the matrix
of this system, we have:
\begin{equation}
  \det(G) c_1^\nu =
  \begin{vmatrix}
      -e^{-x\kappa_1} & \int_{-x}^x e^{(\kappa_1
    + \kappa_2)y}\,dy & \dots & \int_{-x}^x e^{(\kappa_1
    + \kappa_n)y}\,dy  \\
      -e^{-x\kappa_2} & \int_{-x}^x e^{(\kappa_2
    + \kappa_2)y}\,dy & \dots & \int_{-x}^x e^{(\kappa_2
    + \kappa_n)y}\,dy  \\
\dots & \dots & \dots \\
     -e^{-x\kappa_n} & \int_{-x}^x e^{(\kappa_n
    + \kappa_2)y}\,dy & \dots & \int_{-x}^x e^{(\kappa_n
    + \kappa_n)y}\,dy  
 \end{vmatrix}
\end{equation}
We now exploit the relation $\kappa_{j+1} = \kappa_j + 1$ to transform by row
manipulations the determinant on the right into:
\begin{multline}
  \begin{vmatrix}
      -e^{-x\kappa_1} & \int_{-x}^x e^{(\kappa_1
    + \kappa_2)y}\,dy & \dots & \int_{-x}^x e^{(\kappa_1
    + \kappa_n)y}\,dy  \\
      0 & \int_{-x}^x e^{(\kappa_2
    + \kappa_2)y}(1 - e^{-x}e^{-y})\,dy & \dots & \int_{-x}^x e^{(\kappa_2
    + \kappa_n)y}(1 - e^{-x}e^{-y})\,dy  \\
0 & \dots & \dots & \dots \\
     0  & \int_{-x}^x e^{(\kappa_n
    + \kappa_2)y}(1 - e^{-x}e^{-y})\,dy & \dots & \int_{-x}^x e^{(\kappa_n
    + \kappa_n)y}(1 - e^{-x}e^{-y})\,dy  
 \end{vmatrix}\\
=
  -e^{-x\kappa_1} \begin{vmatrix}
      \int_{-x}^x e^{(\kappa_2
    + \kappa_2)y}(e^y - e^{-x})e^{-y}\,dy & \dots & \int_{-x}^x e^{(\kappa_2
    + \kappa_n)y}(e^y - e^{-x})e^{-y}\,dy  \\
\dots &\dots & \dots \\
     \int_{-x}^x e^{(\kappa_n
    + \kappa_2)y}(e^y - e^{-x})e^{-y}\,dy & \dots & \int_{-x}^x e^{(\kappa_n
    + \kappa_n)y}(e^y - e^{-x})e^{-y}\,dy  
 \end{vmatrix}
\end{multline}
This new determinant is a Gramian for a positive measure, it is strictly
positive. So $c_1^\nu < 0$. It can be proven in the
exact same manner  $d_1^\nu >0$, $(-1)^n c_n^{\nu+1} >0$, $(-1)^n d_n^{\nu+1}
<0$, $(-1)^n e_\nu >0$, $(-1)^n f_\nu>0$, $g_{\nu+1} >0$, and $h_{\nu+1}>0$.
\end{proof}

Using the asymptotics for $t\to-\infty$ \eqref{eq:asym4} and \eqref{eq:asym3}
in \eqref{eq:rec1} (and \eqref{eq:ggg}) and also in \eqref{eq:rec3} (and
\eqref{eq:ggg2}), and also the asymptotics \eqref{eq:asym1}, \eqref{eq:asym2}
($t\to+\infty$)  in \eqref{eq:rec2} and \eqref{eq:rec4} give the following
proposition.
\begin{proposition}\label{prop:murec}
  There holds
\begin{subequations}
  \begin{align}\label{eq:murec1}
a\mu_\nu - \mu_{\nu+1} &=  2 a^{\frac12}\; c_1^\nu h_{\nu+1} = -2a^{\frac12}\;
c^{\nu+1}_n f_{\nu}\\ \label{eq:murec2}
a\mu_{\nu+1} - \mu_{\nu} &=  - 2 a^{\frac12} \; d^\nu_1
  g_{\nu+1} =  2 a^{\frac12} \; d^{\nu+1}_n
  e_{\nu}
  \end{align}
\end{subequations}
\end{proposition}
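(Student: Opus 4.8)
The plan is to set $w=it$ in each of the four identities \eqref{eq:rec1}--\eqref{eq:rec4} of Proposition~\ref{eq:recurrence}, replace the reproducing kernels appearing in them by their closed forms \eqref{eq:ggg} and \eqref{eq:ggg2}, and then compare the leading terms of the asymptotic expansions of the two sides, taking $t\to-\infty$ for \eqref{eq:rec1} and \eqref{eq:rec3} and $t\to+\infty$ for \eqref{eq:rec2} and \eqref{eq:rec4}. All four arguments are structurally the same; I would carry out one carefully and record the adjustments for the others.

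Consider \eqref{eq:rec1} at $w=it$. By \eqref{eq:ggg} its right-hand side is $E_{\nu+1}(it)+a^{1/2}c_1^\nu\,(g_{\nu+1}E_{\nu+1}(it)-h_{\nu+1}F_{\nu+1}(it))/(t-\frac\nu2)$. Let $t\to-\infty$. By \eqref{eq:asym3}, $E_{\nu+1}(it)=(-1)^n e^{-xt}(-\mu_{\nu+1}/(2t)+O(t^{-2}))$, which is of size $e^{-xt}/t$, whereas by \eqref{eq:asym4}, $F_{\nu+1}(it)=(-1)^n e^{-xt}(1+O(t^{-1}))$ is of size $e^{-xt}$; so inside the kernel the $F$-term dominates and the kernel equals $-(-1)^n h_{\nu+1}e^{-xt}/t+o(e^{-xt}/t)$. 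On the left-hand side, writing the argument as $i(t+\frac12)$ and using $e^{-x/2}=a^{1/2}$ together with $1/(t+\frac12)=1/t+O(t^{-2})$, one gets $a^{1/2}E_\nu(i(t+\frac12))=(-1)^n a\,e^{-xt}(-\mu_\nu/(2t))+o(e^{-xt}/t)$. All three terms now sit at the common order $e^{-xt}/t$, and identifying the coefficient of $(-1)^n e^{-xt}/t$ gives $-\frac12 a\mu_\nu=-\frac12\mu_{\nu+1}-a^{1/2}c_1^\nu h_{\nu+1}$, which is the first equality of \eqref{eq:murec1}.

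The other three come out the same way. For the second equality of \eqref{eq:murec1} one uses \eqref{eq:rec3}, expands $K^\nu$ via \eqref{eq:ggg2} (so that $e_\nu$ and $f_\nu$ enter), and again lets $t\to-\infty$, the $F$-term of that kernel again being dominant. For \eqref{eq:murec2} one uses \eqref{eq:rec2} and \eqref{eq:rec4}, but now lets $t\to+\infty$ and invokes \eqref{eq:asym1}, $E_\sigma(it)=e^{xt}(1+O(t^{-1}))$, and \eqref{eq:asym2}, $F_\sigma(it)=e^{xt}(\mu_\sigma/(2t)+O(t^{-2}))$; here it is the $E$-term of each kernel that dominates, and comparison of the coefficients of $e^{xt}/t$ yields the two equalities of \eqref{eq:murec2}.

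The one point requiring care is the bookkeeping. One must keep track of how the shift $w\mapsto w\pm i\frac12$ converts $e^{\mp x(t\pm1/2)}$ into $a^{\mp1/2}e^{\mp xt}$ while leaving the leading coefficient of the $O(1/t)$ correction unchanged; one must notice that, in the limit at hand, the shifted function on the left and the plain function on the right are each of the \emph{subdominant} size $e^{\pm xt}/t$ (an $E_\bullet$ when $t\to-\infty$, an $F_\bullet$ when $t\to+\infty$), whereas inside each reproducing kernel it is the \emph{other}, dominant ($e^{\pm xt}$) function, divided by $t$, that contributes -- so that every term of the identity lands at the common order $e^{\pm xt}/t$ with no unmatched $O(e^{\pm xt})$ piece; and one must carry the signs $(-1)^n$ through consistently. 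With that done, each of the four identities follows from a single coefficient comparison, and no deeper ingredient is needed.
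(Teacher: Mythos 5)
Your proposal is correct and follows exactly the paper's (very terse) argument: the paper derives the proposition precisely by inserting the $t\to-\infty$ asymptotics \eqref{eq:asym3}, \eqref{eq:asym4} into \eqref{eq:rec1}, \eqref{eq:rec3} and the $t\to+\infty$ asymptotics \eqref{eq:asym1}, \eqref{eq:asym2} into \eqref{eq:rec2}, \eqref{eq:rec4}, with the kernels expanded via \eqref{eq:ggg} and \eqref{eq:ggg2}. Your worked example for \eqref{eq:rec1}, including the conversion $e^{-x(t+1/2)}=a^{1/2}e^{-xt}$ and the identification of the coefficient of $(-1)^n e^{-xt}/t$, reproduces the intended computation and yields the stated identities.
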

\begin{proposition}
  There holds:
  \begin{subequations}
  \begin{align}\label{eq:comp1}
    c^\nu_1 h_{\nu+1}  &= - c^{\nu+1}_n f_{\nu} \\
\label{eq:comp2}    d^\nu_1 g_{\nu+1}  &= - d^{\nu+1}_n e_{\nu}\\
\label{eq:comp3}   a\, c_1^\nu g_{\nu+1} &= - c^{\nu+1}_n e_{\nu} \\
\label{eq:comp4}    d^\nu_1 h_{\nu+1} &= - a\, d^{\nu+1}_n f_{\nu} 
    \end{align}
\end{subequations}
\end{proposition}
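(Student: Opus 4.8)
The plan is to get \eqref{eq:comp1}--\eqref{eq:comp2} directly out of Proposition \ref{prop:murec}, and then to obtain \eqref{eq:comp3}--\eqref{eq:comp4} from the recurrences of Proposition \ref{eq:recurrence} by feeding into them the asymptotic expansions that were \emph{not} used in proving Proposition \ref{prop:murec}.

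First I would observe that \eqref{eq:comp1} is nothing but the equality of the two right-hand sides of \eqref{eq:murec1}: from $2a^{1/2}c_1^\nu h_{\nu+1}=-2a^{1/2}c_n^{\nu+1}f_\nu$ one cancels the factor $2a^{1/2}$. In the same way, \eqref{eq:comp2} is the equality of the two right-hand sides of \eqref{eq:murec2}.

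Next, for \eqref{eq:comp3} I would set $w=it$ in \eqref{eq:rec1} and in \eqref{eq:rec3} and let $t\to+\infty$, using \eqref{eq:asym1} for the $E$'s and \eqref{eq:asym2} for the $F$'s. On the left of \eqref{eq:rec1} the shift $w\mapsto w+i\tfrac12$ combines with the prefactor $a^{1/2}=e^{-x/2}$ to restore the normalization $e^{xt}$, so both sides have leading term $e^{xt}$; in the kernel term $K^{\nu+1}(-i\tfrac\nu2,it)$, given by \eqref{eq:ggg}, the $E_{\nu+1}$-contribution is of order $e^{xt}$ while the $F_{\nu+1}$-contribution is already $O(e^{xt}/t)$ by \eqref{eq:asym2}, so only the $g_{\nu+1}$-term reaches the coefficient of $e^{xt}/t$. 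Matching that coefficient should give $a^{1/2}c_1^\nu g_{\nu+1}=\tfrac12(\alpha_{\nu+1}-\alpha_\nu)$, and the same computation run on \eqref{eq:rec3} (with the kernel \eqref{eq:ggg2}) should give $a^{-1/2}c_n^{\nu+1}e_\nu=\tfrac12(\alpha_\nu-\alpha_{\nu+1})$. Adding these two relations and multiplying by $a^{1/2}$ eliminates $\alpha_{\nu+1}-\alpha_\nu$ and produces $a\,c_1^\nu g_{\nu+1}=-c_n^{\nu+1}e_\nu$, which is \eqref{eq:comp3}. For \eqref{eq:comp4} I would run the mirror-image argument on \eqref{eq:rec2} and \eqref{eq:rec4} with $t\to-\infty$, using \eqref{eq:asym4} for the $F$'s and \eqref{eq:asym3} for the $E$'s; matching the coefficient of $(-1)^n e^{-xt}/t$ should give $a^{-1/2}d_1^\nu h_{\nu+1}=\tfrac12(\delta_{\nu+1}-\delta_\nu)$ from \eqref{eq:rec2} and $a^{1/2}d_n^{\nu+1}f_\nu=\tfrac12(\delta_\nu-\delta_{\nu+1})$ from \eqref{eq:rec4}, and then adding and multiplying by $a^{1/2}$ gives \eqref{eq:comp4}.

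The hard part will be the asymptotic bookkeeping in this last step: one must carry the half-integer shifts $w\mapsto w\pm i\tfrac12$ through the expansions simultaneously with the factors $a^{\pm1/2}$ (which conspire to restore the plain $e^{\pm xt}$ normalization), and one must see that in each kernel contribution the $E$-factor dominates (order $e^{\pm xt}$) while the $F$-factor is already one power of $t$ smaller, so that exactly one of the two numerator terms of $K$ survives into the $1/t$ coefficient; after that it is merely a coefficient comparison and a single addition. I do not expect a purely algebraic shortcut to succeed: combining Proposition \ref{prop:nlcd} with \eqref{eq:comp1}--\eqref{eq:comp2} only produces the two relations $g_{\nu+1}c_1^\nu+f_\nu d_n^{\nu+1}=-na^{-1/2}$ and $h_{\nu+1}d_1^\nu+e_\nu c_n^{\nu+1}=na^{1/2}$, which make \eqref{eq:comp3} and \eqref{eq:comp4} equivalent to one another but do not pin either of them down, so some genuinely analytic input — equivalently, one of the kernel identities underlying Proposition \ref{eq:recurrence} — seems indispensable.
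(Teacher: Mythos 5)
Your proof is correct. The treatment of \eqref{eq:comp1}--\eqref{eq:comp2} (reading them off as the equality of the two right-hand sides in Proposition \ref{prop:murec}) is exactly the paper's, and your derivation of \eqref{eq:comp3}--\eqref{eq:comp4} by matching the $1/t$ coefficients in \eqref{eq:rec1}, \eqref{eq:rec3} as $t\to+\infty$ and in \eqref{eq:rec2}, \eqref{eq:rec4} as $t\to-\infty$ is sound — the bookkeeping with the shifts $w\mapsto w\pm i\tfrac12$ against the factors $a^{\pm1/2}$, and the observation that only one numerator term of each kernel survives at order $1/t$, all check out, and the relations $a^{1/2}c_1^\nu g_{\nu+1}=\tfrac12(\alpha_{\nu+1}-\alpha_\nu)$, etc., are right. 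This is, however, precisely the route the paper relegates to an ``alternatively'' remark; its primary proof of the last two identities is the purely algebraic shortcut that you declare impossible in your closing paragraph, and there you are mistaken. You extracted from Proposition \ref{prop:nlcd} only the two sum relations obtained by substituting \eqref{eq:comp1}--\eqref{eq:comp2} into \eqref{eq:rr3} and \eqref{eq:rr4}, but you overlooked the ratio relation obtained by simply dividing \eqref{eq:rr1} by \eqref{eq:rr2}, namely
\begin{equation*}
  \frac{c_n^{\nu+1}}{d_n^{\nu+1}} = \frac{a\,c_1^\nu}{d_1^\nu},
  \qquad\text{i.e.}\qquad a\,c_1^\nu d_n^{\nu+1} = c_n^{\nu+1} d_1^\nu ,
\end{equation*}
which is legitimate since the preceding proposition guarantees that none of these quantities vanishes. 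Multiplying \eqref{eq:comp1} by $d_1^\nu$ and using this ratio gives $c_1^\nu\bigl(d_1^\nu h_{\nu+1} + a\, d_n^{\nu+1} f_\nu\bigr)=0$, hence \eqref{eq:comp4}; multiplying \eqref{eq:comp2} by $a\,c_1^\nu$ gives \eqref{eq:comp3} in the same way. So the algebraic route does pin both identities down, with no further analytic input; what your approach buys instead is independence from the non-vanishing proposition (and from \eqref{eq:rr1}--\eqref{eq:rr2} altogether), at the cost of redoing the asymptotic expansions.
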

\begin{proof}
  The first two follow from the previous proposition. The last two follow from
  the first and the relation $\frac{a c_1^\nu}{d_1^\nu} =
  \frac{c_n^{\nu+1}}{d_n^{\nu+1}}$ (from \eqref{eq:rr1},
  \eqref{eq:rr2}). Alternatively they can be
  deduced from a look at the asymptotics in
  \eqref{eq:rec1}, \eqref{eq:rec3} for $w=it$, $t\to+\infty$ and in
  \eqref{eq:rec2} and \eqref{eq:rec4} for $t\to-\infty$.
\end{proof}

  \begin{definition}\label{def}
    We define the quantities $X = X(\nu,n,a)$ and $Y(\nu,n,a)$ by the following
    expressions:
    \begin{subequations}
  \begin{align}
      X &:= \frac{g_{\nu+1}}{h_{\nu+1}} = \frac1a \frac{e_\nu}{f_\nu}\\
      Y &:= - \frac1a \frac{c^{\nu+1}_n}{d^{\nu+1}_n} = - \frac{c_1^\nu}{d^\nu_1}
      \end{align}
\end{subequations}
  \end{definition}

From Proposition \ref{prop:murec} we have 
$\mu_\nu = \frac{2a^{\frac12}}{1 - a^2}\left(-ah_{\nu+1} c_1^\nu  + g_{\nu+1} d_1^\nu
  \right)$ and using
the first two relations in Proposition \ref{prop:nlcd} gives
\begin{equation}
  \mu_\nu = \frac{2n a}{1 - a^2} \frac{-h_{\nu+1} c_n^{\nu+1} + g_{\nu+1}
    d_n^{\nu+1}}{h_{\nu+1} d_n^{\nu+1} - g_{\nu+1} c_n^{\nu+1}} = \frac{2n
    a}{1 - a^2} \frac{X+aY}{1+aXY}
\end{equation}
Similarly from \eqref{eq:murec1} and \eqref{eq:murec2}
$\mu_{\nu+1}  = \frac{2a^{\frac12}}{1-a^2}\left(- h_{\nu+1} c_1^\nu  + a
  g_{\nu+1} d_1^\nu \right)$ which gives using Proposition \ref{prop:nlcd} and
Definition \ref{def}
\begin{equation}
  \mu_{\nu+1} = \frac{2n
    a}{1 - a^2} \frac{aX+Y}{1+aXY}
\end{equation}

\begin{theorem}\label{thm:nonlinear}
  The quantities $X$ and $Y$ from Definition \ref{def} are related to the
  mu-functions $\mu_\nu$ and $\mu_{\nu+1}$  by the equations:
  \begin{equation}
    \mu_\nu = \frac{2n a}{1 - a^2}\frac{X+aY}{1+a XY} \qquad \mu_{\nu+1} = \frac{2n a}{1 - a^2}\frac{aX+Y}{1+a XY}
  \end{equation}
They obey the following non-linear differential system:
\begin{subequations}
  \begin{align}\label{eq:nlxyx}
  a\frac{d}{da} X &= \nu
  X - (1 - X^2)\frac{2n a}{1 - a^2}\frac{aX+Y}{1+a XY} \\ \label{eq:nlxyy}
a\frac{d}{da} Y &= -(\nu+1) Y
 + (1 - Y^2)\frac{2n a}{1 - a^2}\frac{X+aY}{1+a XY} 
  \end{align}
\end{subequations}
\end{theorem}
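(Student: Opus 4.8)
The two formulas expressing $\mu_\nu$ and $\mu_{\nu+1}$ through $X$ and $Y$ have in fact already been obtained in the paragraph preceding the statement, by inserting the first two relations of Proposition~\ref{prop:nlcd} together with Definition~\ref{def} into the expressions for $\mu_\nu$, $\mu_{\nu+1}$ furnished by Proposition~\ref{prop:murec}. So the only thing left to prove is the non-linear system \eqref{eq:nlxyx}--\eqref{eq:nlxyy}, and the plan is simply to differentiate, with respect to $x$, the two quotients that define $X$ and $Y$, using the Krein-type differential equations available for the ingredients, then to convert $\frac{d}{dx}$ into $a\frac{d}{da}=-\frac{d}{dx}$ and substitute the $\mu$-formulas just recalled.

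For $X$ I would use the representation $X=g_{\nu+1}/h_{\nu+1}$. Here $g_{\nu+1}=E_{\nu+1}(it)$ and $h_{\nu+1}=F_{\nu+1}(it)$ are evaluated at the fixed point $t=-\frac\nu2$, and the system \eqref{eq:diffEFa}--\eqref{eq:diffEFb} attached to the progression of index $\nu+1$ gives $\frac{d}{dx}g_{\nu+1}=-\frac\nu2 g_{\nu+1}+\mu_{\nu+1}h_{\nu+1}$ and $\frac{d}{dx}h_{\nu+1}=\frac\nu2 h_{\nu+1}+\mu_{\nu+1}g_{\nu+1}$. The quotient rule then collapses to
\[
\frac{d}{dx}X=-\nu X+(1-X^2)\,\mu_{\nu+1},
\]
so that $a\frac{d}{da}X=\nu X-(1-X^2)\mu_{\nu+1}$; substituting $\mu_{\nu+1}=\frac{2na}{1-a^2}\frac{aX+Y}{1+aXY}$ yields \eqref{eq:nlxyx}. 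Symmetrically, for $Y$ I would use $Y=-c_1^\nu/d_1^\nu$. Specializing \eqref{eq:dcj} and \eqref{eq:ddj} to $j=1$ for the progression of index $\nu$ (so that $\kappa_1=\frac{\nu+1}2$) gives $\frac{d}{dx}c_1^\nu=\frac{\nu+1}2 c_1^\nu+\mu_\nu d_1^\nu$ and $\frac{d}{dx}d_1^\nu=-\frac{\nu+1}2 d_1^\nu+\mu_\nu c_1^\nu$, whence the quotient rule gives
\[
\frac{d}{dx}Y=(\nu+1)Y-(1-Y^2)\,\mu_\nu,
\]
that is $a\frac{d}{da}Y=-(\nu+1)Y+(1-Y^2)\mu_\nu$; substituting $\mu_\nu=\frac{2na}{1-a^2}\frac{X+aY}{1+aXY}$ gives \eqref{eq:nlxyy}.

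The computation is mechanical; the only points that require attention are reading off the correct evaluation point ($t=-\frac\nu2$ for $g_{\nu+1},h_{\nu+1}$) and the correct index ($\kappa_1=\frac{\nu+1}2$ for $c_1^\nu,d_1^\nu$), keeping track of the sign in $a\frac{d}{da}=-\frac{d}{dx}$, and observing that since the two expressions given in Definition~\ref{def} for $X$ (and for $Y$) are equal --- this being precisely one of the relations already recorded in Proposition~\ref{prop:nlcd} and in \eqref{eq:comp1}--\eqref{eq:comp4} --- one is free to pick whichever representation makes the differentiation shortest. There is no genuine obstacle at this stage: the analytic content was front-loaded into Proposition~\ref{prop:murec}, whose proof uses the asymptotic expansions of $E_\sigma$ and $F_\sigma$, and into the non-vanishing proposition, which guarantees that the denominators $h_{\nu+1}$ and $d_1^\nu$ occurring in $X$ and $Y$ do not vanish, so that these quotients and their derivatives are well defined.
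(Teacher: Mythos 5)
Your proposal is correct and is essentially the paper's own proof: the author likewise takes the $\mu$-formulas as already established in the preceding paragraph, differentiates $X=g_{\nu+1}/h_{\nu+1}$ via \eqref{eq:diffEFa}--\eqref{eq:diffEFb} at $t=-\frac\nu2$ and $Y=-c_1^\nu/d_1^\nu$ via \eqref{eq:dcj}--\eqref{eq:ddj} with $\kappa_1=\frac{\nu+1}2$, and converts $\frac{d}{dx}$ to $-a\frac{d}{da}$ before substituting the $\mu$-expressions. All signs and evaluation points in your computation check out.
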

\begin{proof}
    From $X =
    \frac{g_{\nu+1}}{h_{\nu+1}}$ and \eqref{eq:diffEFa}, \eqref{eq:diffEFb}:
      \begin{subequations}
  \begin{align}
     -a\frac{d}{da} g_{\nu+1} &= -\frac\nu2 g_{\nu+1} + \mu_{\nu+1}
     h_{\nu+1} \\
     -a\frac{d}{da} h_{\nu+1} &= \frac\nu2 h_{\nu+1} + \mu_{\nu+1}
     g_{\nu+1}
        \end{align}
\end{subequations}
Hence: $a\frac{d}{da} X = \nu  X - \mu_{\nu+1} (1 - X^2)$. And
from $Y = -\frac{c_1^\nu}{d^\nu_1}$ and
\eqref{eq:dcj}, \eqref{eq:ddj} we have:
\begin{subequations}
  \begin{align}
  -a\frac{d}{da} c_1^\nu&=  \frac{\nu+1}2 c_1^\nu + \mu_\nu d_1^\nu\\
  -a\frac{d}{da} d_1^\nu &= -\frac{\nu+1}2 d_1^\nu +  \mu_\nu c_1^\nu
  \end{align}
\end{subequations}
and this gives $a\frac{d}{da} Y = -(\nu+1) Y + \mu_{\nu} (1 - Y^2)$.
\end{proof}

\begin{theorem}\label{thm:pVIsys}
Let $(X,Y)$ be two functions of a variable $a$. If they obey  the differential system  $(VI_{\nu,n})$:
  \begin{subequations}
  \begin{align} 
  a\frac{d}{da} X &= \nu
  X - (1 - X^2)\frac{2n a}{1 - a^2}\frac{aX+Y}{1+a XY} \\  
a\frac{d}{da} Y &= -(\nu+1) Y
 + (1 - Y^2)\frac{2n a}{1 - a^2}\frac{X+aY}{1+a XY} 
  \end{align}
\end{subequations}
then the quantity $q = a \frac{aX+Y}{X+aY}$
satisfies as function of $b= a^2$  the $P_{VI}$ differential equation:
\begin{equation}\label{eq:PVI}
\begin{split}
  \frac{d^2 q}{db^2} = \frac12\left\{\frac1q+\frac1{q-1}
  +\frac1{q-b}\right\}\left(\frac{dq}{db}\right)^2 - \left\{\frac1b+\frac1{b-1}
  +\frac1{q-b}\right\}\frac{dq}{db}\\+\frac{q(q-1)(q-b)}{b^2(b-1)^2}\left\{\alpha+\frac{\beta
    b}{q^2} + \frac{\gamma (b-1)}{(q-1)^2}+\frac{\delta
    b(b-1)}{(q-b)^2}\right\}
\end{split}
\end{equation}
with
parameters
$(\alpha,\beta,\gamma,\delta) = (\frac{(\nu+n)^2}2,
\frac{-(\nu+n+1)^2}2, \frac{n^2}2, \frac{1 - n^2}2)$.
\end{theorem}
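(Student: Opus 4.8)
\emph{Proof idea.} Set $\vartheta := a\,\dfrac{d}{da}$, so that $\vartheta = 2b\,\dfrac{d}{db}$ on functions of $b=a^{2}$. Recall that $q = a\,\dfrac{aX+Y}{X+aY}$ and that, by Theorem~\ref{thm:nonlinear}, $\mu_\nu = \dfrac{2na}{1-a^{2}}\,\dfrac{X+aY}{1+aXY}$ and $\mu_{\nu+1} = \dfrac{2na}{1-a^{2}}\,\dfrac{aX+Y}{1+aXY}$, so that $q = a\,\mu_{\nu+1}/\mu_\nu$ and $q$ depends only on the ratio of the two linear forms $aX+Y$, $X+aY$. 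Clearing denominators in the definition of $q$ gives $X(q-b) = aY(1-q)$, hence
\[
  \frac{aY}{X} = \frac{b-q}{q-1},\qquad X+aY = X\,\frac{b-1}{q-1},\qquad aX+Y = X\,\frac{q(b-1)}{a(q-1)},\qquad 1+aXY = 1 + X^{2}\,\frac{b-q}{q-1}.
\]
Thus $Y$ and the two linear forms equal $X$ times explicit rational functions of $q,b$, while $1+aXY = 1 + u\,\tfrac{b-q}{q-1}$; since the computation below involves $X,Y$ only through combinations invariant under $(X,Y)\mapsto(-X,-Y)$, it all becomes rational in $q$, $b$ and $u := X^{2}$, and $(X,Y)$ is recovered from $(q,u)$ for generic $b$. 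This is what will let us eliminate one of the two unknowns.

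The plan is then the standard reduction of a two-dimensional first order polynomial system to one second order equation. First compute $\vartheta q$ from the system of Theorem~\ref{thm:nonlinear} and substitute the displayed identities: the part of $\vartheta q$ not involving $\mu_\nu,\mu_{\nu+1}$ becomes a rational function of $q,b$ alone, and the part carrying them becomes, because $1+aXY$, $aX+Y$, $X+aY$ are of the form $(\text{constant}) + u\cdot(\text{rational in }q,b)$, a M\"obius function of $u$; altogether one gets an identity of the shape
\[
  \vartheta q = R_0(q,b) + \frac{R_1(q,b) + u\,R_2(q,b)}{1 + u\,R_3(q,b)},
\]
which inverts to give $u = U(q,\vartheta q,b)$ rational. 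Next, $\vartheta u = 2\nu u - 2(1-u)\,(X\mu_{\nu+1})$, and $X\mu_{\nu+1}$ is itself rational in $(u,q,b)$ by the same substitutions, so $\vartheta u$ is rational in $(u,q,b)$. Applying $\vartheta$ to the formula for $\vartheta q$, inserting the expression for $\vartheta u$, and finally substituting $u = U(q,\vartheta q,b)$, yields $\vartheta^{2} q = G(q,\vartheta q,b)$, equivalently a second order equation $\dfrac{d^{2}q}{db^{2}} = \widetilde G\!\left(q,\dfrac{dq}{db},b\right)$.

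It then remains to verify that $\widetilde G$ is exactly the right-hand side of \eqref{eq:PVI} with $(\alpha,\beta,\gamma,\delta) = \big(\tfrac{(\nu+n)^{2}}{2},\,-\tfrac{(\nu+n+1)^{2}}{2},\,\tfrac{n^{2}}{2},\,\tfrac{1-n^{2}}{2}\big)$. The terms $\tfrac12\{\tfrac1q+\tfrac1{q-1}+\tfrac1{q-b}\}(dq/db)^{2}$ and $-\{\tfrac1b+\tfrac1{b-1}+\tfrac1{q-b}\}\,dq/db$ should fall out automatically; the substantive content is the identification of the four parameters, which I would pin down by matching principal parts of $\widetilde G$ at the fixed singularities $q\in\{0,1,b\}$ and $b\in\{0,1\}$ (a datum which characterizes a Painlev\'e~VI equation uniquely) and checking that it returns the stated $(\alpha,\beta,\gamma,\delta)$. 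This is the main obstacle: carrying the elimination far enough that the second order equation genuinely collapses into Painlev\'e~VI form, while keeping track of signs, of the half-integer powers of $b$ (the factors $a=\sqrt b$) that occur in intermediate expressions and must cancel in the end, and of the genericity needed to invert $(X,Y)\leftrightarrow(q,u)$. If the raw computation proves unwieldy, a more structural route is to exhibit a conjugate variable $p$ (a normalization of $\mu_\nu$, or of the common scale of $X$ and $Y$) for which $(q,p)$ satisfies the polynomial Hamiltonian system $b(b-1)\,q' = \partial_p K_{VI}$, $b(b-1)\,p' = -\partial_q K_{VI}$ of Painlev\'e~VI, then read off the $\theta$-parameters and invoke the classical equivalence of that Hamiltonian system with \eqref{eq:PVI}.
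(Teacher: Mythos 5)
Your plan is essentially the paper's own proof: the paper likewise reduces the system to a single second-order equation for $q$ by introducing one auxiliary quantity rational in $X$, $Y$, $q$, $b$ --- it takes $T = 1/(1+aXY)$ where you take $u = X^2$, the two being related by a M\"obius transformation with coefficients rational in $(q,b)$ --- then solves for that quantity from the explicit expression of $\frac{dq}{db}$ and substitutes it into its own first-order equation. Like yours, the paper's argument is only a sketch that leaves the final verification (that the resulting second-order equation is exactly $P_{VI}$ with the stated parameters) to a lengthy unrecorded computation.
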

\begin{proof} The computation being lengthy we only give some brief
indications. It is useful to introduce the variable
  \begin{equation}
    T := \frac1{1 + a XY}
  \end{equation}
It verifies the differential equation:
\begin{equation}
  a\frac{d}{da} T = \frac{2 a^2 n}{1 - a^2} T^2 ( Y^2 - X^2)
\end{equation}
Let $b = a^2$. One has
\begin{equation}\label{eq:diffT}
  \frac{d}{db} T = T(T-1)\frac{n(q^2 -b)}{b(q-b)(q-1)}
\end{equation}
On the other hand one establishes:
\begin{equation}\label{eq:diffq}
  \frac{dq}{db} = \frac{2n + \nu + (\nu+1)b}{b(b-1)} q -
  \frac{(\nu+n)q^2+(\nu+1+n)b}{b(b-1)}+ \frac{2nqT}b
\end{equation} This equation gives an expression of $T$ in terms of
$\frac{dq}{db}$, $q$, and $b$. Substituting this value of $T$ in
\eqref{eq:diffT} gives a second order differential equation for $q$. With
some tenacity one finally realizes that it is nothing else than $P_{VI}$ with
parameters $(\alpha,\beta,\gamma,\delta) = (\frac{(\nu+n)^2}2,
\frac{-(\nu+n+1)^2}2, \frac{n^2}2, \frac{1 - n^2}2)$.
\end{proof}

\begin{remark}
  One also establishes that as functions of $b = a^2$, the expressions
  $\frac{-aY}X$ and $a^2 X^2$ are  $P_{VI}$-transcendents.
\end{remark}

We mention finally  the following:
\begin{theorem}
  Let $(X,Y)$  be a solution of the non-linear system $(VI_{\nu,n})$. Let $Z$
  be defined by the following  relation ($n\neq 1$):
  \begin{equation}
    \frac{aY+Z}{1+aYZ} = ({\frac1a-a})\frac{\nu+1}{n-1}\,\frac{Y}{1-Y^2} - \frac{n}{(n-1)}\frac{aY+X}{1+aXY}
  \end{equation}
Then $(Y,Z)$ is a solution of system $(VI_{\nu+1,n-1})$.
Let $W$ be such that
\begin{equation}
  \frac{W + aX}{1 + aWX} = ({\frac1a-a}) \frac\nu{n+1}\,\frac{ X}{ 1 -
    X^2} - 
  \frac{n}{(n+1)} \frac{aX + Y}{1+ aXY}
\end{equation}
Then $(W,X)$ is a solution of system $(VI_{\nu-1,n+1})$.
\end{theorem}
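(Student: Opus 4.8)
The plan is to verify both Bäcklund-type identities by direct computation, in the same spirit as the proof of Theorem~\ref{thm:pVIsys}, using one structural observation to cut the work in half. Fix a solution $(X,Y)$ of $(VI_{\nu,n})$. Writing $P$ for the right-hand side of the relation defining $Z$ (a rational function of $X$, $Y$ and $a$), the relation $\frac{aY+Z}{1+aYZ}=P$ is a Möbius equation in $Z$, solved by $Z=\frac{P-aY}{1-aPY}$; this exhibits $Z$ as an explicit rational function of $X$, $Y$, $a$. What must be checked is that this pair $(Y,Z)$ satisfies both equations of $(VI_{\nu+1,n-1})$, and likewise that $(W,X)$ satisfies both equations of $(VI_{\nu-1,n+1})$.

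The structural observation is this. Multiplying the defining relation for $Z$ by $\frac{2(n-1)a}{1-a^2}$ and using $\frac1a-a=\frac{1-a^2}{a}$ together with $\frac{2na}{1-a^2}\frac{X+aY}{1+aXY}=\mu_\nu$, one finds that the relation is equivalent to
\begin{equation*}
  \widetilde\mu \;:=\; \frac{2(n-1)a}{1-a^2}\,\frac{aY+Z}{1+aYZ} \;=\; \frac{2(\nu+1)Y}{1-Y^2}-\mu_\nu .
\end{equation*}
The quantity $\widetilde\mu$ is exactly what plays the role of the ``second mu-function'' of $(VI_{\nu+1,n-1})$ in the normalization of Theorem~\ref{thm:nonlinear}. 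Substituting $\mu_\nu=\frac{2(\nu+1)Y}{1-Y^2}-\widetilde\mu$ into equation \eqref{eq:nlxyy} for $(X,Y)$, which reads $a\frac{d}{da}Y=-(\nu+1)Y+(1-Y^2)\mu_\nu$, yields $a\frac{d}{da}Y=(\nu+1)Y-(1-Y^2)\widetilde\mu$, and this is precisely the first equation of $(VI_{\nu+1,n-1})$ evaluated on $(Y,Z)$. Thus that equation holds automatically. A mirror-image rearrangement shows that the relation defining $W$ is equivalent to the statement that the \emph{second} equation of $(VI_{\nu-1,n+1})$ holds on $(W,X)$, now via equation \eqref{eq:nlxyx}.

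It remains to prove the second equation of $(VI_{\nu+1,n-1})$ for $(Y,Z)$, that is
\begin{equation*}
  a\frac{d}{da}Z = -(\nu+2)Z + (1-Z^2)\,\frac{2(n-1)a}{1-a^2}\,\frac{Y+aZ}{1+aYZ},
\end{equation*}
and, symmetrically, the first equation of $(VI_{\nu-1,n+1})$ for $(W,X)$. For the former, I would differentiate $Z=\frac{P-aY}{1-aPY}$ with respect to $\log a$, insert $a\frac{d}{da}X$ and $a\frac{d}{da}Y$ (hence $a\frac{d}{da}P$) from \eqref{eq:nlxyx}--\eqref{eq:nlxyy}, and on the other side substitute $Z=\frac{P-aY}{1-aPY}$ into the claimed right-hand side. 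The identities $1+aYZ=\frac{(1-aY)(1+aY)}{1-aPY}$, $aY+Z=\frac{P(1-a^2Y^2)}{1-aPY}$, $1-Z=\frac{(1-P)(1+aY)}{1-aPY}$, $1+Z=\frac{(1+P)(1-aY)}{1-aPY}$, and $Y+aZ=\frac{(1-a^2)Y+aP(1-Y^2)}{1-aPY}$ reduce both sides, after clearing the common denominators, to a single polynomial identity in $X$, $Y$, $a$. This is a ``lengthy computation'' of the same type as in Theorem~\ref{thm:pVIsys}; as there, introducing the auxiliary variable $T=(1+aXY)^{-1}$ keeps the expressions manageable. The case of $W$ is the same computation read backwards: one checks directly that applying the $W$-construction at parameters $(\nu+1,n-1)$ to the pair $(Y,Z)$ returns $(X,Y)$ (the relations defining $Z$ and $W$ are mutually inverse), so the two verifications are interchangeable; the hypothesis $n\neq 1$ is just what makes the $Z$-construction well defined.

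The only real obstacle is the bulk of that final polynomial identity; there is no conceptual difficulty once the structural shortcut has removed one equation from each half of the statement. The point that needs care is the index and sign bookkeeping: one must be certain that the slot filled by $\widetilde\mu$ above is indeed the ``$\mu_{\nu+2}$'' of $(VI_{\nu+1,n-1})$, and the analogous slot in the $W$-case the ``$\mu_{\nu-1}$'' of $(VI_{\nu-1,n+1})$, in the exact normalization fixed by Theorem~\ref{thm:nonlinear}; a shifted index there would destroy the equivalence that makes the first (resp. second) equation free.
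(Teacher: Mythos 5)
Your route is essentially the paper's: its entire proof is the remark that one computes $\frac{dZ}{da}$ (resp.\ $\frac{dW}{da}$), eliminates $X$ (resp.\ $Y$), and that ``this is a long computation''; you likewise reduce each half to one long rational identity and stop short of carrying it out. The worthwhile thing you add, left implicit in the paper, is the observation that multiplying the defining relation for $Z$ by $\frac{2(n-1)a}{1-a^2}$ turns it into
\begin{equation*}
  \frac{2(n-1)a}{1-a^2}\,\frac{aY+Z}{1+aYZ}\;=\;\frac{2(\nu+1)Y}{1-Y^2}-\mu_\nu,
\end{equation*}
which together with \eqref{eq:nlxyy} is \emph{equivalent} to the first equation of $(VI_{\nu+1,n-1})$ evaluated on $(Y,Z)$; the mirror statement holds for $W$ via \eqref{eq:nlxyx}. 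This is correct (including the placement of $\widetilde\mu$ in the ``$\mu_{\nu+2}$'' slot of the normalization of Theorem \ref{thm:nonlinear}), it explains where the M\"obius relations come from, and it halves the remaining work. Two reservations. First, the substantive verification --- the $Z$-equation of $(VI_{\nu+1,n-1})$ and the $W$-equation of $(VI_{\nu-1,n+1})$ --- remains a plan rather than a computation, so your proposal is no more complete than the paper's own proof on the only genuinely hard point. Second, your shortcut for the $W$-half is not yet a deduction: that the $W$-construction at parameters $(\nu+1,n-1)$ algebraically inverts the $Z$-construction is a fact about the M\"obius relations alone and does not by itself transfer the differential statement from one half of the theorem to the other. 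To make it do so you would need to combine the already-proved $Z$-statement at parameters $(\nu-1,n+1)$ with uniqueness of solutions of the system for given initial data (solve $(VI_{\nu-1,n+1})$ with the same initial values as $(W,X)$, push it forward by the $Z$-construction, and identify the result with $(X,Y)$ by uniqueness); otherwise simply perform the second polynomial verification as well.
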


\begin{proof} One needs to eliminate $X$ (resp. $Y$) from the result of
computing $\frac{dZ}{da}$ (resp. $\frac{dW}{da}$). This is a long computation.
\end{proof}

\section{Conclusion}

We assemble some of our main results in the following summary:

\begin{theorem} Let $x>0$, $a = e^{-x}$, and let $PW_x$  be the  Paley-Wiener
space of entire functions which are square integrable on the real line and of
finite exponential type at most $x>0$. Let
 \begin{equation}
   \sigma = (z_1 = -i \kappa_1, \dots, z_n = -i \kappa_n)
 \end{equation}
be a finite sequence of $n$ distinct purely imaginary numbers and 
\begin{equation}
  PW_x(\sigma) = \{ \frac{f(z)}{\prod_{1\leq j\leq n} (z-z_j)}\;| f\in PW_x,
  f(z_1) = \dots = f(z_n) = 0\}
\end{equation} The modified space $PW_x(\sigma)$ is a Hilbert space through
its identification with the closed subspace of functions in $PW_x$ vanishing
on $\sigma$. There is a unique entire function $\cE_\sigma$ verifying the
conditions:
\begin{enumerate}
\item $\cE_\sigma(it)$ is real for $t$ real,
\item $\cE_\sigma(0) >0$,
\item $\lim_{t\to+\infty} \frac{\cE_\sigma(-it)}{\cE_\sigma(it)} = 0$,
\end{enumerate}
and in terms of which the scalar produts of evaluators  in $PW_x(\sigma)$ are:
\begin{equation}\label{eq:Kfinal}
\cK_\sigma(z,w) = \frac{\overline{\cE_\sigma(z)}\cE_\sigma(w) -
  {\cE_\sigma(\overline z)}\overline{\cE_\sigma(\overline w)}}{i(\overline z-w)}  
\end{equation} 
Let $\cF_\sigma(w) = \overline{\cE_\sigma(\overline w)}$ ($ = \cE_\sigma(-w)$). 
There holds:
\begin{equation}
    -a\frac{d}{da} \cE_\sigma(w) = -i w \cE_\sigma(w) + \mu_\sigma(a)
    \cF_\sigma(w)
\end{equation}
The function $\mu_\sigma(a)$ admits (among others) the two representations:
\begin{equation}\label{eq:muWchWsh}
  \mu_\sigma(a) = a\frac{d}{da} \log\frac{W(\ch(\kappa_1 x),\dots,
    \ch(\kappa_n x))}{W(\sh(\kappa_1 x),\dots,
    \sh(\kappa_n x))}
\end{equation}
where $W(f_1,\dots,f_n)$ is the Wronskian determinant (with respect to $x = \log\frac1a$) and
\begin{equation}\label{eq:mugram}
  \mu_\sigma(a) = \frac{2(-1)^n}{g_n(a)} \;
  \begin{vmatrix}
      \int_{-x}^x e^{(\kappa_1
    + \kappa_1)y}\,dy  & \int_{-x}^x e^{(\kappa_1
    + \kappa_2)y}\,dy & \dots & \int_{-x}^x e^{(\kappa_1
    + \kappa_n)y}\,dy  & e^{-x\kappa_1} \\
    \int_{-x}^x e^{(\kappa_2
    + \kappa_1)y}\,dy  & \int_{-x}^x e^{(\kappa_2
    + \kappa_2)y}\,dy & \dots & \int_{-x}^x e^{(\kappa_2
    + \kappa_n)y}\,dy & e^{-x\kappa_2}  \\
\dots & \dots & \dots &\dots&\dots\\
     \int_{-x}^x e^{(\kappa_n
    + \kappa_1)y}\,dy & \int_{-x}^x e^{(\kappa_n
    + \kappa_2)y}\,dy & \dots & \int_{-x}^x e^{(\kappa_n
    + \kappa_n)y}\,dy  & e^{-x\kappa_n}\\
e^{x\kappa_1}& \dots &\dots & e^{x\kappa_n} &0
 \end{vmatrix}
\end{equation}
where $g_n(a) = \det(\int_{-x}^x e^{(\kappa_i
    + \kappa_j)y}\,dy)_{1\leq i,j\leq n}$ is the principal $n\times n$ minor
  of the $(n+1)\times(n+1)$
determinant. There also holds
\begin{equation}\label{eq:mu2}
  \mu_\sigma(a)^2 = - a\frac{d}{da}a\frac{d}{da} \log g_n(a)
\end{equation}
In the specific case where $\sigma$ is an arithmetic progression:
\begin{equation}
  \kappa_1 = \frac{\nu+1}2, \kappa_2 = \frac{\nu+1}2 + 1, \dots, \kappa_n =
  \frac{\nu+1}2 + n-1
\end{equation}
the function $\mu_{\nu,n}(a)$ can be expressed as a quotient of two multiple
integrals:
\begin{equation}\label{eq:multint}
  \mu_{\nu,n}(a) = 2n\;\frac{\idotsint_{[a,\frac1a]^{n-1}}  \prod_{i<j}
    (t_j - t_i)^2 \prod_i t_i^\nu(t_i - a)(\frac1a - t_i) dt_1\dots dt_{n-1}}{\idotsint_{[a,\frac1a]^n} \prod_{i<j}
    (t_j - t_i)^2  \prod_i t_i^\nu  dt_1\dots dt_{n}}
\end{equation}
and the expression
\begin{equation}
  q_{\nu,n} = \frac{a \mu_{\nu+1,n}}{\mu_{\nu,n}}\;,
\end{equation}
as a function of $a^2$, verifies
the Painlev\'e~VI equation with
  parameters
  \begin{equation}
(\alpha,\beta,\gamma,\delta) = (\frac{(\nu+n)^2}2,
  \frac{-(\nu+n+1)^2}2, \frac{n^2}2, \frac{1 - n^2}2)
\end{equation}
\end{theorem}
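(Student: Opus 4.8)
The plan is to obtain the statement by assembling the theorems already proved, isolating the two points that need genuine extra work: the uniqueness of $\cE_\sigma$ under the normalizations (1)--(3), and the multiple-integral formula \eqref{eq:multint}. Everything else is bookkeeping of earlier results.

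I would first dispatch the assembly. Specializing Theorem~\ref{thm:wronsk} (equivalently Theorem~\ref{thm:3}) to $H = PW_x$, where $A_x(w) = \cos(xw)$, $B_x(w) = \sin(xw)$ and the initial coefficient $\mu$ vanishes identically, produces the even function $\cA_\sigma$ and the odd function $\cB_\sigma$, real on the line; putting $\cE_\sigma = \cA_\sigma - i\cB_\sigma$ and $\cF_\sigma = \cA_\sigma + i\cB_\sigma$ (so $\cF_\sigma(w) = \overline{\cE_\sigma(\overline w)} = \cE_\sigma(-w)$), a one-line expansion of the bilinear numerator turns \eqref{eq:ABn} into \eqref{eq:Kfinal}. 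The three normalizations hold for this $\cE_\sigma$: (1) because the $z_j$ are imaginary, $\cA_\sigma$ and $i\cB_\sigma$ are real on $i\RR$; (2) from the sign built into \eqref{eq:aa}--\eqref{eq:bb}, i.e. the Remark following Theorem~\ref{thm:3}; (3) from the asymptotics \eqref{eq:asym1}, \eqref{eq:asym3} together with $\cE_\sigma(w) = i^n\gamma(w)E_\sigma(w)$. The evolution in $a$ is Theorem~\ref{thm:xxx}: forming $\cA_n - i\cB_n$ in \eqref{eq:muna}--\eqref{eq:munb} gives $\frac{d}{dx}\cE_\sigma(w) = -iw\,\cE_\sigma(w) + \mu_\sigma\,\cF_\sigma(w)$, and $a\frac{d}{da} = -\frac{d}{dx}$. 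Formula \eqref{eq:muWchWsh} is the $\mu_n$-formula of Theorem~\ref{thm:crumsystem} (via Theorem~\ref{thm:xxx}) evaluated with $A_x(z_j) = \ch(\kappa_j x)$, $B_x(z_j) = -i\sh(\kappa_j x)$, the constant factor $(-i)^n$ disappearing inside the logarithm. Formula \eqref{eq:mu2} is the $\tau_n$-formula of the same theorem, $\tau_n = (-1)^n W(\ch(\kappa_1 x),\dots,\ch(\kappa_n x))\,W(\sh(\kappa_1 x),\dots,\sh(\kappa_n x))$ (here $\tau\equiv 1$), combined with Proposition~\ref{prop} and $g_n = \det\bigl(\tfrac{2\sh((\kappa_i+\kappa_j)x)}{\kappa_i+\kappa_j}\bigr)$ to see that $\tau_n$ and $g_n$ differ by an $x$-independent constant, whence $\mu_\sigma^2 = -\tfrac{d^2}{dx^2}\log g_n$. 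Finally \eqref{eq:mugram} would come from \eqref{eq:muformula}, $\mu_\sigma(x) = 2(-1)^n\sum_j c_j^\nu e^{\kappa_j x}$, by solving the linear system $\sum_j c_j^\nu \int_{-x}^x e^{(\kappa_i+\kappa_j)y}\,dy = -e^{-x\kappa_i}$ with Cramer's rule and invoking the Schur-complement identity $\det\left(\begin{smallmatrix} G & v\\ r & 0\end{smallmatrix}\right) = -\det(G)\,rG^{-1}v$.

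For uniqueness I would note that the kernel \eqref{eq:Kfinal} determines $\cE_\sigma$ only up to the substitution $\cE_\sigma \mapsto \alpha\cE_\sigma + \beta\cF_\sigma$ with $|\alpha|^2 - |\beta|^2 = 1$: clearing the denominator in the equality of two kernels yields a bilinear identity in $(z,w)$ whose general solution (standard in the theory of \cite{Bra}) is precisely this substitution. Condition~(1) then forces $\alpha,\beta\in\RR$, since $\cE_\sigma(it)$ and $\cF_\sigma(it) = \cE_\sigma(-it)$ are real on $i\RR$ and linearly independent as functions of $t$; condition~(3) forces $\beta = 0$, because by \eqref{eq:asym1}--\eqref{eq:asym3} the value $\cE_\sigma(it)$ dominates $\cE_\sigma(-it)$ by a factor $\sim t$ as $t\to+\infty$, so $\bigl(\alpha\cE_\sigma(-it) + \beta\cE_\sigma(it)\bigr)/\bigl(\alpha\cE_\sigma(it) + \beta\cE_\sigma(-it)\bigr)\to\beta/\alpha$; and condition~(2) forces $\alpha = 1$.

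The only substantial new computation is \eqref{eq:multint}. Substituting $t = e^y$ identifies $\int_{-x}^x e^{(\kappa_i+\kappa_j)y}\,dy$ with $\int_a^{1/a} t^{\nu}\,t^{i-1}\,t^{j-1}\,dt$ (since $\kappa_i + \kappa_j - 1 = \nu + (i-1) + (j-1)$), so the Andreief/Heine identity gives $g_n = \tfrac1{n!}\idotsint_{[a,1/a]^n}\prod_{i<j}(t_j - t_i)^2\prod_i t_i^\nu\,dt_i$, the denominator of \eqref{eq:multint}. For the numerator I would start from the bordered determinant $M$ of \eqref{eq:mugram} and exploit $\kappa_j = \kappa_{j-1}+1$: the column operations $\mathrm{col}_j \mapsto \mathrm{col}_j - e^x\,\mathrm{col}_{j-1}$ reduce the bottom row to the single entry $e^{x\kappa_1}$; after a Laplace expansion along that row, the row operations $\mathrm{row}_i \mapsto \mathrm{row}_i - e^{-x}\,\mathrm{row}_{i-1}$ reduce the last column to the single entry $e^{-x\kappa_1}$; a second expansion (the two surviving scalars cancelling) leaves $M$ equal, up to a sign, to $\det_{1\le i,j\le n-1}\bigl(\int_{-x}^x (e^y - e^x)(e^y - e^{-x})\,e^{(\kappa_i+\kappa_j)y}\,dy\bigr)$. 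Substituting $t = e^y$ once more, with $(e^y - e^x)(e^y - e^{-x}) = -(t-a)(1/a - t)$, and applying Andreief, this $(n-1)\times(n-1)$ determinant becomes $\pm\tfrac1{(n-1)!}$ times the $(n-1)$-fold integral in the numerator of \eqref{eq:multint}; since $\mu_{\nu,n} = 2(-1)^n M/g_n$ and $2n/n! = 2/(n-1)!$, the identity follows once the signs match. The Painlevé assertion is then pure bookkeeping: by Theorem~\ref{thm:nonlinear} the pair $(X,Y)$ attached to $PW_x(\sigma)$ obeys $(VI_{\nu,n})$ and satisfies $\mu_\nu = \tfrac{2na}{1-a^2}\tfrac{X+aY}{1+aXY}$, $\mu_{\nu+1} = \tfrac{2na}{1-a^2}\tfrac{aX+Y}{1+aXY}$, so $q_{\nu,n} = a\,\mu_{\nu+1,n}/\mu_{\nu,n} = a\,\tfrac{aX+Y}{X+aY}$ is exactly the quantity $q$ of Theorem~\ref{thm:pVIsys}, which delivers \eqref{eq:PVI} with the stated parameters. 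The hard part will be the sign bookkeeping in \eqref{eq:multint}: the cofactor signs from the two Laplace expansions, the permutation sign in the Andreief identity, and the powers of $-1$ from $(e^y-e^x)(e^y-e^{-x})$ must combine to yield precisely the factor $2n$ with a plus sign.
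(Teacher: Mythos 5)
Your proposal is correct and follows the paper's own route for almost every component: the assembly of \eqref{eq:Kfinal} and the differential system from Theorems \ref{thm:wronsk}, \ref{thm:xxx}, the derivation of \eqref{eq:muWchWsh} and \eqref{eq:mu2} from Theorem \ref{thm:crumsystem} together with Proposition \ref{prop}, the Cramer/bordered-determinant origin of \eqref{eq:mugram}, the row-and-column manipulations exploiting $\kappa_{j+1}=\kappa_j+1$ followed by the Andr\'eief identity for \eqref{eq:multint} (the paper stops at the quotient of Gramians and leaves the Andr\'eief step implicit, so making it explicit is a small improvement), and the reduction of the Painlev\'e claim to Theorems \ref{thm:nonlinear} and \ref{thm:pVIsys}. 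The one place where you genuinely diverge is the uniqueness of $\cE_\sigma$. You invoke the de Branges classification: two $\cE$-functions with the same kernel differ by $\cE\mapsto\alpha\cE+\beta\cE^*$ with $|\alpha|^2-|\beta|^2=1$, and then conditions (1), (3), (2) successively force $\alpha,\beta$ real, $\beta=0$, $\alpha=1$. The paper instead argues directly and more elementarily: writing $\cA=\frac12(\cE+\cE^*)$, $\cB=\frac i2(\cE-\cE^*)$, the evaluator at the origin is $\cK^\sigma(0,w)=2\cA(0)\frac1w\cB(w)$, which pins down $\cB$ up to a positive scalar and then $\cA$ up to the inverse scalar, and condition (3), rewritten as $\frac{-i\cB(it)}{\cA(it)}\to1$, fixes the scalar. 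Your route buys generality (it is the standard structure theorem and needs no special point), at the cost of importing a fact from \cite{Bra} whose proof requires the linear independence of $\cE_\sigma$ and $\cE_\sigma^*$; the paper's route is self-contained. One small point to tighten in your version: for condition (2) you appeal to the sign Remark after Theorem \ref{thm:3}, which only gives $\cA_\sigma(it)>0$ for $t>0$, hence $\cE_\sigma(0)=\cA_\sigma(0)\geq0$ by continuity; the strict inequality still needs the paper's observation that the evaluator at $z=0$ in $PW_x(\sigma)$ is nonzero (via a function of $PW_x$ with prescribed zero orders), or some equivalent non-vanishing argument.
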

\begin{proof} We have established \eqref{eq:Kfinal} with $\cE_\sigma(it) =
\prod_{1\leq i \leq n}\frac1{t+\kappa_i} E_\sigma(it)$ and $E_\sigma(it)$
given by equation \eqref{eq:lesc}. In particular $E_\sigma$ hence $\cE_\sigma$
is real on the imaginary axis. One has $E_\sigma(it)\sim_{t\to+\infty}
e^{xt}$, hence $\cE_\sigma(it)$, which by \eqref{eq:Kfinal} cannot vanish for
$t>0$, is positive for $t>0$. It is easy to prove that given arbitrary points
$w_1$, \dots, $w_m$ and non-negative integers $n_1$, \dots $n_m$ there is in
$PW_x$ a function vanishing exactly to the order $n_j$ at $w_j$ for all
$j$. So in $PW_x(\sigma)$ the evaluator at $z=0$ is non zero, which proves
$\cE_\sigma(0)\neq0$, hence $\cE_\sigma(0)>0$. As $\cE_\sigma$ is real on the
imaginary line one has $\cF_\sigma(w) = \cE_\sigma(-w)$. And from the
representations \eqref{eq:lesc} and \eqref{eq:lesd} we know
$\frac{\cF_\sigma(it)}{\cE_\sigma(it)}\to_{t\to+\infty} 0$. Let $\cE$ be
another function computing the reproducing kernel and with the  properties
(1), (2) and (3). Let $\cA = \frac12(\cE + \cE^*)$, $\cB = \frac i2(\cE -
\cE^*)$, which are respectively even and odd, real on the real line. The
evaluator at the origin (as said above, necessarily non zero) is
$\cK^\sigma(0,w) = 2\cA(0)\frac1w \cB(w)$, hence the function $\cB$ is known
up to a positive real multiple, then the function $\cA$ is known up to (the
inverse of) this multiple. Condition (3) can be written
$\frac{-i\cB(it)}{\cA(it)}\to_{t\to+\infty} 1$, and this finally identifies
$\cA = \cA_\sigma$ and $\cB = \cB_\sigma$.

Formula \eqref{eq:muWchWsh} (in which $\kappa_i+\kappa_j\neq0$ is assumed)
follows from Theorem \ref{thm:xxx}, and formula \eqref{eq:mu2} from Theorem
\ref{thm:xxx} and Proposition \ref{prop}.
According to \eqref{eq:muformula} one has $  \mu_\sigma(a) = 2(-1)^n \sum_{1\leq j
\leq n} c_j^\sigma e^{\kappa_j x}$, the coefficients $c_j^\sigma$ solving:
\begin{equation}
  \forall i = 1\dots n\quad \sum_{1\leq j \leq n} c_j^\sigma \int_{-x}^x e^{(\kappa_i
    + \kappa_j)y}\,dy  = -e^{-x\kappa_i}
\end{equation}
This gives the representation \eqref{eq:mugram}. 

In the case of the arithmetic progression of reason one, row manipulations
replace the $(n+1)\times(n+1)$ determinant by
\begin{equation}
  - e^{-x\kappa_n}\begin{vmatrix}
      \int_{-x}^x e^{(\kappa_1
    + \kappa_1)y}(1 - \frac1{a} e^y)\,dy  &  \dots & \int_{-x}^x e^{(\kappa_1
    + \kappa_n)y}(1 - \frac1{a} e^y )\,dy \\
    \int_{-x}^x e^{(\kappa_2
    + \kappa_1)y}(1 - \frac1{a} e^y )\,dy  &  \dots & \int_{-x}^x e^{(\kappa_2
    + \kappa_n)y}(1 - \frac1{a} e^y )\,dy \\
\dots & \dots & \dots \\
     \int_{-x}^x e^{(\kappa_{n-1}
    + \kappa_1)y}(1 - \frac1{a} e^y )\,dy & \dots & \int_{-x}^x e^{(\kappa_{n-1}
    + \kappa_n)y}(1 - \frac1{a} e^y )\,dy \\
e^{x\kappa_1}& \dots & e^{x\kappa_n} 
 \end{vmatrix}
\end{equation}
Column manipulations lead to:
\begin{equation}
  \begin{split}
    - \begin{vmatrix} \int_{-x}^x e^{(\kappa_1 + \kappa_1)y}(1 - a e^y)(1 -
      \frac1a e^y)\,dy & \dots & \int_{-x}^x e^{(\kappa_1
        + \kappa_{n-1})y}(1 - a e^y )(1 - \frac1a e^y)\,dy \\
      \int_{-x}^x e^{(\kappa_2 + \kappa_1)y}(1 - a e^y )(1 - \frac1a e^y)\,dy
      & \dots & \int_{-x}^x e^{(\kappa_2
        + \kappa_{n-1})y}(1 - a e^y )(1 - \frac1a e^y)\,dy \\
      \dots & \dots & \dots \\
      \int_{-x}^x e^{(\kappa_{n-1} + \kappa_1)y}(1 - a e^y )(1 - \frac1a
      e^y)\,dy & \dots & \int_{-x}^x e^{(\kappa_{n-1} + \kappa_{n-1})y}(1 - a
      e^y )(1 - \frac1a e^y)\,dy
    \end{vmatrix} \\= (-1)^n \det_{1\leq i, j\leq n-1} \left(\int_a^{\frac1a}
    t^{\nu+i+j-2}(\frac1a- t)(t-a)\,dt\right)
  \end{split}
\end{equation}
So we have the representation of $\mu_{\nu,n}$ as a quotient of two gramians
(for $n=1$ the determinant at the numerator is taken to be $1$):
\begin{equation}\label{eq:mugramquot}
  \mu_{\nu,n}(a) = 2 \frac{\det_{1\leq i, j\leq n-1} \left(\int_a^{\frac1a}
    t^{\nu+i+j-2}(\frac1a- t)(t-a)\,dt\right)}{\det_{1\leq i, j\leq n} \left(\int_a^{\frac1a}
    t^{\nu+i+j-2}\,dt\right)}
\end{equation}
This gives formula
\eqref{eq:multint}. 

Finally the Painlev\'e~VI assertion follows from Theorems \ref{thm:nonlinear} and \ref{thm:pVIsys}. 
\end{proof}

\begin{remark}
  The change of variables $t_j = a + (\frac1a - a)u$ transforms the multiple integrals at the
  numerator and denominator of
  expression \eqref{eq:multint} into, respectively:
\begin{equation}
  (\frac1a - a)^{n^2 - 1}a^{(n-1)\nu}{\idotsint_{[0,1]^{n-1}}  \prod_{i<j}
    (u_j - u_i)^2 \prod_i (1 +\alpha u_i)^\nu u_i (1-u_i) du_1\dots du_{n-1}}
\end{equation}
\begin{equation}
\text{and}\quad   (\frac1a - a)^{n^2}a^{n \nu}{\idotsint_{[0,1]^n} \prod_{i<j}
    (u_j - u_i)^2  \prod_i (1 +\alpha u_i)^\nu  du_1\dots du_{n}}
\end{equation} with $\alpha = \frac1{a^2} - 1$. Similar multiple integrals
arise in the work of Forrester and Witte \cite{fwJUE} relating the ``Jacobi
unitary ensemble'' of random matrices to the Okamoto hamiltonian formulation
of the Painlev\'e~VI equation \cite{okamotoVI}. It seems however (cf. the
introduction of \cite{fwRMT_PVI}) that in this context of multiple integrals
one had so far not yet encountered directly Painlev\'e~VI
transcendents \emph{per se}, but rather solutions to Okamato's
``$\sigma$-equation''. 
\end{remark}

\begin{remark}
  We see from \eqref{eq:mugramquot} or \eqref{eq:mugram} that
  $\mu_{\nu,n}(a)$ is the product of $a^{\nu+1}$ with a rational function of
  $a^2$ and $a^{2\nu}$. The quantity $q = a \frac{\mu_{\nu+1,n}}{\mu_{\nu,n}}$
  is a rational
  function  (with rational coefficients) of $b$  and $b^\nu$ ($b = a^2$). In
  particular, for $\nu\in\ZZ$, $q$ is a rational function of $b$.
\end{remark}

\begin{remark} We have left aside most of the developments which have their
origins in \cite{cras2003}, \cite{hankel}  and which put the objects studied
here in another context (which leads in particular to various further
representations for the $\mu$-functions), indeed a context which presided over
their introduction.  We have also left aside a number of other developments
related to techniques of orthogonal polynomial theory, multiple integrals and
non-linear relations. We hope to address these topics in further
publications.
\end{remark}

\bigskip

\begin{small} \textbf{Acknowledgments}. All results included in this
manuscript were obtained between January and late April 2008, while I was
spending a seven months visit at the I.H.E.S. Numerous conversations on
various (mostly other) topics with permanent members, visitors and staff made
this a particularly stimulating and memorable residence. I also benefited
during this period from electronic exchanges with Philippe Biane. I had found
non-linear equations in the framework I had developed to establish the Fourier
transform as a scattering \cite{cras2003, hankel} and Philippe Biane's
question about what this would give when the Gamma function was replaced by a
finite rational expression gave a welcome impetus to my efforts of that time.
I discovered in the process of answering this question the non-linear
equations \eqref{eq:nlxyx}, \eqref{eq:nlxyy}, from which the previous ones
derived by a process of confluence, and I related them to Painlev\'e~VI. I
thus thank him for many interesting interactions (for example, in relation
with the Wronskians built with the trigonometric functions, which he had also
encountered in his own enterprises),  and for having then kept me informed of
some of his works \cite{biane}. I also thank the organizers Joaquim Burna,
H{\aa}kan Hedenmalm, Kristian Seip, and Mikhail Sodin of the mini-symposium on
Hilbert spaces of entire functions (EMS congress, Amsterdam 2008) for having
given me the opportunity to present there aspects of this material. Finally, I
would like to express special thanks to Michel Balazard for having made
possible my participation to the Zeta Functions congresses I, II, and III in
Moscow, in 2006, 2008, and 2010, which gave me also opportunities to present
related material.
 

\end{small}


\end{document}